\documentclass[11pt,twoside,a4paper]{article}

%%%%%%%%%%%%%%%%%%%%%%%%%%%%%%
%%% Packages
%%%%%%%%%%%%%%%%%%%%%%%%%%%%%%

%%%%%%%%%%%%%%%%%%%%%%%
%%% inputenc: To allow for unicode characters
%%%%%%%%%%%%%%%%%%%%%%%
%\usepackage[utf8]{inputenc}
%\DeclareUnicodeCharacter{012D}{\u{?}}
%%% End: inputenc %%%

%%%%%%%%%%%%%%%%%%%%%%%
%%% fontenc: use a T1 font
%%%%%%%%%%%%%%%%%%%%%%%
%\usepackage[T1]{fontenc}
%%% End: inputenc %%%

%%%%%%%%%%%%%%%%%%%%%%%
%%% amsmath: AMS latex extensions
%%%%%%%%%%%%%%%%%%%%%%%
\usepackage{amsmath}
\numberwithin{equation}{section}
%%% End: amsmath %%%

%%%%%%%%%%%%%%%%%%%%%%%
%%% amssymb: Provides the symbol \square
%%%%%%%%%%%%%%%%%%%%%%%
\usepackage{amssymb}
%%% End: amssymb %%%

%%%%%%%%%%%%%%%%%%%%%%%
%%% mathabx: Provides \widecheck
%%%%%%%%%%%%%%%%%%%%%%%
%\usepackage{mathabx}
%%% End: amssymb %%%

%%%%%%%%%%%%%%%%%%%%%%%%%%%%%%%%%%%%%%%%%%%%%%%%%%
%%% Microtype: enables better font spacing
%%%%%%%%%%%%%%%%%%%%%%%%%%%%%%%%%%%%%%%%%%%%%%%%%%
\usepackage{microtype}
%%% end

%%%%%%%%%%%%%%%%%%%%%%%%%%%%%%%%%%%%%%%%%%%%%%%%%%
%%% CJKutf8: for writing Cong's name in chinese
%%%%%%%%%%%%%%%%%%%%%%%%%%%%%%%%%%%%%%%%%%%%%%%%%%
%\usepackage{CJKutf8}
%%% end

%'
%' biblatex
%'
\usepackage[backend=biber,maxnames=10,backref=true,hyperref=true,giveninits=true,safeinputenc]{biblatex}
\ifdefined\overleaf
\addbibresource{csc.bib}
\addbibresource{local.bib}
\else
\addbibresource{journals.bib}
\addbibresource{articles.bib}
\addbibresource{books.bib}
\addbibresource{incollection.bib}
\addbibresource{inproceedings.bib}
\addbibresource{proceedings.bib}
\addbibresource{preprints.bib}
\addbibresource{theses.bib}
\addbibresource{infmath.bib}
\addbibresource{infmath_books.bib}
\addbibresource{infmath_report.bib}
\ifdefined\extract
\else
\addbibresource{local.bib}
\fi
\fi

\DefineBibliographyStrings{english}{%
	backrefpage = {cited on page},
	backrefpages = {cited on pages},
}

% write the title always in quotes
\DeclareFieldFormat[report]{title}{``#1''}
\DeclareFieldFormat[book]{title}{``#1''}
\AtEveryBibitem{\clearfield{url}}
\AtEveryBibitem{\clearfield{note}}
%%% end

%%%%%%%%%%%%%%%%%%%%%%%%%%%%%%%%%%%%%%%%%%%%%%%%%%
%%% graphicx: Embed images
%%%%%%%%%%%%%%%%%%%%%%%%%%%%%%%%%%%%%%%%%%%%%%%%%%
\usepackage{graphicx}
\graphicspath{{images/}}
%%% end

%%%%%%%%%%%%%%%%%%%%%%%%%%%%%%%%%%%%%%%%%%%%%%%%%%
%%% tikz: Drawing library
%%%%%%%%%%%%%%%%%%%%%%%%%%%%%%%%%%%%%%%%%%%%%%%%%%
\usepackage{tikz}
\usepackage{pgfplots}
\usetikzlibrary{external,intersections,calc,decorations.pathreplacing}
\tikzsetexternalprefix{tikz/}
\tikzexternalize
%%% end

\usepackage[all]{xy}
%\usepackage{showkeys}

%%%%%%%%%%%%%%%%%%%%%%%%%%%%%%
%%% Title and Author
%%%%%%%%%%%%%%%%%%%%%%%%%%%%%%
\newcommand{\ourtitle}{Explicit Inversion of the Attenuated Photoacoustic Operator in General Observation Geometries}

\title{\ourtitle}
\author{Cong Shi$^{1, 2}$\\{\footnotesize\href{mailto:cong.shi@univie.ac.at}{cong.shi@univie.ac.at}}}
%%% End
%\begin{CJK}{UTF8}{gbsn}??\end{CJK}
%%%%%%%%%%%%%%%%%%%%%%%%%%%%%%%%%%%%%%%%%%%%%%%%%%
%%% hyperref
%%%%%%%%%%%%%%%%%%%%%%%%%%%%%%%%%%%%%%%%%%%%%%%%%%
\usepackage[pdftex,colorlinks=true,linkcolor=blue,citecolor=green,urlcolor=blue,bookmarks=true,bookmarksnumbered=true]{hyperref}
% Add metadata
\makeatletter
\AtBeginDocument{
	\hypersetup
	{
	    pdfauthor={Cong Shi},
	    pdfsubject={Attenuation in Photoacoustic Imaging},
	    pdftitle={\ourtitle},
	    pdfkeywords={Photoacoustic Imaging, Attenuation, Reconstruction formula, Planar measurement geometry, Sphere measurement geometry}
	}
}
\makeatother

% Refer equations by (..) instead of by Equation ..
\makeatletter
\let\oldtheequation\theequation
\renewcommand\tagform@[1]{\maketag@@@{\ignorespaces#1\unskip\@@italiccorr}} % no clue what this does, but it is needed
\renewcommand\theequation{(\oldtheequation)}
\makeatother
\def\equationautorefname~{}
%%% end

%%%%%%%%%%%%%%%%%%%%%%%%%%%%%%%%%%%%%%%%%%%%%%%%%%
%%% ntheorem, aliascnt: for theorem environments
%%%%%%%%%%%%%%%%%%%%%%%%%%%%%%%%%%%%%%%%%%%%%%%%%%
\usepackage[hyperref,amsmath,thmmarks]{ntheorem}
\usepackage{aliascnt}

\newtheorem{lemma}{Lemma}[section]

\newaliascnt{proposition}{lemma}
\newtheorem{proposition}[proposition]{Proposition}
\aliascntresetthe{proposition}

\newaliascnt{corollary}{lemma}
\newtheorem{corollary}[corollary]{Corollary}
\aliascntresetthe{corollary}

\newaliascnt{theorem}{lemma}
\newtheorem{theorem}[theorem]{Theorem}
\aliascntresetthe{theorem}

\theorembodyfont{\normalfont}
\newaliascnt{definition}{lemma}
\newtheorem{definition}[definition]{Definition}
\aliascntresetthe{definition}

\newaliascnt{assumption}{lemma}

\aliascntresetthe{assumption}

\theoremstyle{nonumberplain}
\theoremseparator{:}
\theoremheaderfont{\normalfont\itshape}

\newtheorem{remark}{Remark}

\theoremsymbol{\ensuremath{\square}}
\newtheorem{proof}{Proof}
%%% end

%%%%%%%%%%%%%%%%%%%%%%%%%%%%%%%%%%%%%%%%%%%%%%%%%%
%%% geometry: setting pagewidth
%%%%%%%%%%%%%%%%%%%%%%%%%%%%%%%%%%%%%%%%%%%%%%%%%%
\usepackage[a4paper,centering,bindingoffset=0cm,marginpar=2cm,margin=2.5cm]{geometry}
%%% end

%%%%%%%%%%%%%%%%%%%%%%%%%%%%%%%%%%%%%%%%%%%%%%%%%%
%%% titlesec: configure layout of sections
%%%%%%%%%%%%%%%%%%%%%%%%%%%%%%%%%%%%%%%%%%%%%%%%%%
\usepackage[pagestyles]{titlesec}
\titleformat{\section}[block]{\large\sc\filcenter}{\thesection.}{0.5ex}{}[]
\titleformat{\subsection}[runin]{\bf}{\thesubsection.}{0.5ex}{}[.]

% Define the pagestyle
\makeatletter
\AtBeginDocument{
\newpagestyle{headers}%
{%
	\headrule
	\sethead%
		[\footnotesize\thepage]%
		[\footnotesize\sc Cong Shi]%
		[]%
		{}%
		{\footnotesize\sc\ourtitle}%
		{\footnotesize\thepage}
	\setfoot{}{}{}
}
\pagestyle{headers}}
\makeatother
%%% end

%%%%%%%%%%%%%%%%%%%%%%%%%%%%%%%%%%%%%%%%%%%%%%%%%%
%%% caption: configure layout of figure captions
%%%%%%%%%%%%%%%%%%%%%%%%%%%%%%%%%%%%%%%%%%%%%%%%%%
\usepackage[font=footnotesize,format=plain,labelfont=sc,textfont=sl,width=0.75\textwidth,labelsep=period]{caption}
%%% end

%%%%%%%%%%%%%%%%%%%%%%%%%%%%%%%%%%%%%%%%%%%%%%%%%%
%%% Configure the paragraph layout
%%%%%%%%%%%%%%%%%%%%%%%%%%%%%%%%%%%%%%%%%%%%%%%%%%
\postdisplaypenalty= 1000
\widowpenalty = 1000
\clubpenalty = 1000
\displaywidowpenalty = 1000
\setlength{\parindent}{0pt}
\setlength{\parskip}{1ex}

%%% end

%%%%%%%%%%%%%%%%%%%%%%%%%%%%%%%%%%%%%%%%%%%%%%%%%%
%%% dsfont: symbols for number spaces
%%%%%%%%%%%%%%%%%%%%%%%%%%%%%%%%%%%%%%%%%%%%%%%%%%
\usepackage{dsfont}
\newcommand{\N}{\mathds{N}}

\newcommand{\R}{\mathds{R}}
\newcommand{\C}{\mathds{C}}
% upper half complex plane (open)
\renewcommand{\H}{\mathds{H}}
%%% end

%%%%%%%%%%%%%%%%%%%%%%%%%%%%%%%%%%%%%%%%%%%%%%%%%%
%%% Declare Math Operators
%%%%%%%%%%%%%%%%%%%%%%%%%%%%%%%%%%%%%%%%%%%%%%%%%%
% Real part
\let\RE\Re
\let\Re=\undefined
\DeclareMathOperator{\Re}{\RE e}
% Imaginary part
\let\IM\Im
\let\Im=\undefined
\DeclareMathOperator{\Im}{\IM m}
% Support
\DeclareMathOperator{\supp}{supp}
\DeclareMathOperator{\sgn}{sgn}
%\DeclareMathOperator{\Div}{div}
% Trace

% Distance to a set

% Diameter

% Rank

\newcommand{\vx}{\mathbf{x}}

\newcommand{\vy}{y}

\newcommand{\vxi}{\boldsymbol{\xi}}
%%% end

%%%%%%%%%%%%%%%%%%%%%%%%%%%%%%%%%%%%%%%%%%%%%%%%%%
%%% Define some symbols
%%%%%%%%%%%%%%%%%%%%%%%%%%%%%%%%%%%%%%%%%%%%%%%%%%
% Euler number
\newcommand{\e}{\mathrm e}
% Imaginary unit (command \i is stored in \ii)
\let\ii\i
\renewcommand{\i}{\mathrm i}
% Derivative
\renewcommand{\d}{\,\mathrm d}
% Directional derivative

%\newcommand{\Ff}{\mathcal{F}}
% Big O

% Photoacoustic Operator

% Photoacoustic Operator with constant attenuation

% Perturbation of the Photoacoustic Operator

% Kernel of $\PO^*\PO$

% Kernel of $\PO^*\PO$ for constant attenuation

% Perturbation of Kernel

% Second Perturbation of Kernel

\newcommand{\Ff}{\mathcal{F}}

\renewcommand{\S}{\mathcal{S}}

%%% end

%%%%%%%%%%%%%%%%%%%%%%%%%%%%%%%%%%%%%%%%%%%%%%%%%%
%%% Commands for comments
%%%%%%%%%%%%%%%%%%%%%%%%%%%%%%%%%%%%%%%%%%%%%%%%%%
\usepackage{comment}
% \newcommand{\new}[1]{\textcolor{red}{#1}}

% \newcommand{\commentC}[1]{\par\noindent\textcolor{blue}{{\bf Cong:} {\em #1}}\par}
%
%
% \usepackage[notref,notcite,color]{showkeys}
% \definecolor{refkey}{gray}{.5}
% \definecolor{labelkey}{gray}{.5}
% \renewcommand*\showkeyslabelformat[1]{\hspace*{-1em}\normalfont\tiny\ttfamily#1}
%%% end

%%% end

\begin{document}

%%%%%%%%%%%%%%%%%%%%%%%%%%%%%%
%%% Titlepage
%%%%%%%%%%%%%%%%%%%%%%%%%%%%%%
\maketitle
\hspace*{1em}
\parbox[t]{0.49\textwidth}{\footnotesize
\hspace*{-1ex}$^1$Computational Science Center\\
University of Vienna\\
Oskar-Morgenstern-Platz 1\\
A-1090 Vienna, Austria}
\parbox[t]{0.4\textwidth}{\footnotesize
\hspace*{-1ex}$^2$Johann Radon Institute for Computational\\
\hspace*{1em}and Applied Mathematics (RICAM)\\
Altenbergerstra{\ss}e 69\\
A-4040 Linz, Austria}
\vspace*{2em}
%%% End

\begin{abstract}
In this paper, we derive explicit reconstruction formulas for two common measurement geometries: a plane and a sphere. The problem is formulated as inverting the forward operator $R^a$, which maps the initial source to the measured wave data. Our first result pertains to planar observation surfaces. By extending the domain of $R^a$ to tempered distributions, we provide a complete characterization of its range and establish that the inverse operator $(R^a)^{-1}$ is uniquely defined and "almost" continuous in the distributional topology. Our second result addresses the case of a spherical observation geometry. Here, with the operator acting on $L^2$ spaces, we derive a stable reconstruction formula of the filtered backprojection type.
\end{abstract}

\section{Introduction}

In photoacoustic tomography (PAT), an initial pressure distribution --- induced by the absorption of a short electromagnetic pulse in a biological medium --- is recovered from acoustic measurements taken at its boundary. A significant challenge arises from acoustic attenuation, where viscous effects in biological tissues cause frequency-dependent energy loss, complicating accurate reconstruction. While exact inversion formulas are well-established for non-attenuating media, none are available for the key geometries of a plane or a sphere in attenuating media. Current approaches for these cases, such as time-reversal \cite{AmmBreGarWah11} or corrected filtered backprojection methods \cite{SchShi18}, offer only approximate solutions. This paper addresses this gap by deriving the first exact, analytical reconstruction formulas in the frequency domain for these two geometries, thereby providing a fundamental solution to the attenuated PAT problem.

Photoacoustic tomography (PAT) in non-attenuating media is a well-established field, where the forward model reduces to the classical spherical mean operator $R$. For planar geometries, a rich body of work provides exact reconstruction formulas, broadly categorized into temporal back-projection \cite{And88, Faw85, BurBauGruHalPal07, XuWan05} and highly efficient Fourier-domain methods \cite{Faw85, And88, HalSchZan09b, XuFenWan02, XuXuWan02, AnaZhaModRiv07}. The latter utilize the Fast Fourier Transform (FFT) for computational efficiency. Similarly, for closed surfaces such as a sphere, filtered backprojection (FBP) algorithms offer a powerful framework that combines theoretical elegance with practical utility \cite{Rad17b}.

Current reconstruction methods in PAT are increasingly leaning towards data-driven and computationally intensive model-based approaches, in order to overcome ill-posedness and model imperfections. Deep learning continues to be a dominant force in this area, with innovations like the pattern framework demonstrating powerful cross-modal analysis for complex structures like the whole brain \cite{CheYanLuoNiuYu24}. Meanwhile, significant advancements are also being made in model-based techniques. A main research direction is given by simultaneous reconstruction of multiple biological and acoustic parameters, such as initial pressure and spatially varying speed of sound, to correct for tissue heterogeneity and improve quantitative accuracy, as seen in the work of Suhonen et al. \cite{SuhLucPulArrCox25_report} and \cite{KiaUhl25}. Other innovations focus on optimizing the imaging process itself, from enhancing image quality via novel sampling strategies \cite{HakRajKavPra25} to determining the optimal illumination function \cite{HuyKal25}. While these methods show impressive empirical performance, a key criticism of the deep learning approaches is their lack of interpretability; they often function as "black boxes" without providing insight into the underlying physics. Similarly, advanced joint-reconstruction methods, though physics-based, are often computationally prohibitive, iterative, and lack closed-form solutions (see, e.g., \cite{KulRamSahTar25_report}). This situation calls for explicit, model-based reconstruction formulas that are both analytically rigorous and computationally efficient, particularly for attenuating media where researches are still in an early phase.

However, these advancements share a critical limitation: they rely on the assumption of a lossless medium. In biological tissues, acoustic attenuation -- the frequency-dependent dissipation of wave energy -- fundamentally alters the physics of wave propagation. This deviation from the standard model introduces significant blurring and artifacts, making classical inversion formulas inadequate. While current state-of-the-art approaches for attenuating media, such as time-reversal \cite{AmmBreGarWah11} or approximate FBP corrections \cite{SchShi18}, offer pragmatic solutions, they are inherently iterative or approximate. A fundamental question remains unanswered: does an exact, analytical inversion formula exist for the attenuated operator $R^a$?

This paper answers this question definitively by deriving the first exact reconstruction formulas for the attenuated PAT problem in two canonical geometries: a plane and a sphere. Our work moves beyond numerical compensation and establishes a new analytical foundation for quantitative imaging in attenuating media.

Our approach is twofold:

\textbf{For Planar Geometry:} We introduce a novel factorization of the attenuated operator $R^a = A \circ R$ within the space of tempered distributions. We then systematically construct the inverses of both components: the classical $R$ and the attenuating operator $A$. This composition of inverses yields a new, exact closed-form solution, for which we prove uniqueness and stability. The conceptual foundation for this distributional framework builds upon the pioneering work of \cite{And88}.

\textbf{For Spherical Geometry:} We reformulate the inverse problem by constructing a compact operator $T$ that precisely encodes the attenuation. This allows us to express the inverse of $R^a$ as a Neumann series expansion, $(I+T)^{-1}$, applied to a modified FBP operator. This result elegantly generalizes the classical FBP formula \cite{Rad17b} to account for attenuation exactly, providing a direct and computationally structured reconstruction method.

In essence, we provide exact inversion formulas (shown in \autoref{thm:main}) to solve the inverse problem, moving beyond the approximate reconstructions prevalent in the literature. To the best of our knowledge, these constitute the first such analytical solutions for these specific geometries. They offer not only practical algorithms but also a deeper theoretical understanding of the attenuated PAT problem. Furthermore, we derive two explicit analytic reconstruction formulas: one for a planar observation surface \autoref{thm:plane} and another for a spherical surface \autoref{thm:sphere}. Finally, we prove the existence and uniqueness of the solutions achievable with these formulas in \autoref{sec:planar} and \autoref{sec:sphere}, respectively.

The paper is structured as follows: \autoref{sec:Amodel} formalizes the attenuated PAT model. \autoref{sec:2formulas} provides a concise overview of our two main explicit reconstruction formulas. \autoref{sec:planar} details the derivation, implementation, and uniqueness proof for the planar case. \autoref{sec:sphere} presents the derivation of the novel FBP-based algorithm for the spherical geometry. We conclude with a discussion on the implications of our findings.

\section{photoacoustic tomography with considering attenuation}\label{sec:Amodel}

In standard photoacoustic imaging, see e.g.~\cite{Wan09}, it is assumed that the medium is \emph{non-attenuating}, and the imaging problem consists in visualising the spatially, compactly supported \emph{absorption density function} $h:\R^3 \to \R$, appearing as a source term in the wave equation
\begin{equation} \label{eqWaveEquation}
\begin{aligned}
\partial_{tt}p(t,\vx)-\Delta p(t, \vx)&=\delta'(t)h(\vx),\quad &&t\in\R,\;\vx \in \R^3, \\
p(t,\vx)&=0,\quad&&t<0,\;\vx\in\R^3,
\end{aligned}
\end{equation}
from the measurements.

Define the $n-$dimensional Fourier transform of a function $f(\vx)$ through
\begin{equation}\label{FourierTransform}
(\Ff f)(\mathbf{\eta})  = \frac1{(2\pi)^{n/2}}\int_{\R^n} f(\vx)\e^{\i\eta \cdot \vx}\d \vx,
\end{equation}
and we will use $\Ff_1, \Ff_2, \dots$ to denote Fourier transforms with respect to the $1st$, $2nd$,... variables.

The inverse Fourier transform will be similarly defined as
\begin{equation}\label{InverseFourierTransform}
(\Ff^{-1} f)(\vx)  = \frac1{(2\pi)^{n/2}}\int_{\R^n} f(\mathbf{\eta})\e^{-\i\vx \cdot \mathbf{\eta}}\d \vx,
\end{equation}
and $\Ff^{-1}_1, \Ff^{-1}_2, \dots$ denote inverse Fourier transforms with respect to the  $1st$, $2nd$,... variables.

In the case where the function being transformed is supported on the positive real axis, we define the \emph{Fourier-Laplace Transform} as an extension to the ordinary Fourier transform by the same formula \autoref{FourierTransform}, but with the arguments $\mathbf{\eta}$ in the closed upper-half complex plane $\overline{\H^+}$. By abusing of notation, we also use $\Ff$ and $\Ff_1, \Ff_2, \dots$ to represent these transforms. According to the Paley-Wiener Theorem, the result of such a transform is an analytic function in $\H^+$.

In this paper, we consider photoacoustic imaging in attenuating media. if we denote $p^a$ as the notation in attenuation media for distinguishing the pressure $p$ in the no-attenuation media, then the propagation of the waves is described by the attenuated wave equation
\begin{equation}\label{eqWaveEquationAttenuation}
\begin{aligned}
\mathcal A_\kappa p^a(t,\vx)-\Delta p^a(t,\vx)&=\delta'(t)h(\vx),\quad &&t\in\R,\;\vx\in\R^3, \\
p^a(t,\vx)&=0,\quad&&t<0,\;\vx\in\R^3,
\end{aligned}
\end{equation}
where $\mathcal A_\kappa$ is the pseudo-differential operator defined in frequency domain by:
\begin{equation}\label{eqAttenuationOp}
\Ff_1 \mathcal (A_\kappa p^a)(\omega,\vx) = -\kappa^2(\omega)\Ff_1 p^a(\omega,\vx),\quad \omega \in \R,\;\vx \in \R^3,
\end{equation}
for some attenuation coefficient $\kappa:\R\to\C$ which admits a solution \autoref{eqWaveEquationAttenuation}.

The formal difference between \autoref{eqWaveEquation} and \autoref{eqWaveEquationAttenuation} is that the second time derivative operator $\partial_{tt}$ is replaced by a pseudo-differential operator $\mathcal A_\kappa$.
We emphasise that standard photoacoustic imaging corresponds to $\kappa^2(\omega) = \omega^2$.

The \emph{attenuated photoacoustic imaging} problem consists in estimating $h(\vx)$ with $\vx$ in the object $\Omega_\epsilon$ from
measurements $p^a(t, \vx)$ for $(t, \vx)\in (0, \infty)\times\partial\Omega$, where $\partial \Omega$
is the boundary of a domain $\Omega$ which contains the support of $h$.

\begin{definition}\label{deAttCoeff}
We call a non-zero function $\kappa\in C^\infty(\R;\overline\H)$, where $\H^+=\{z\in\C\mid\Im z>0\}$ denotes the upper half complex plane and $\overline{\H^+}$ its closure in $\C$, an \emph{attenuation coefficient} if
\begin{enumerate}
\item\label{enAttCoeffPolBdd}
all the derivatives of $\kappa$ are polynomially bounded. That is, for every $\ell\in\N_0$ there exist constants $\kappa_1 > 0$ and $N \in \N$ such that
\begin{equation}\label{eqAttCoeffPolBdd}
|\kappa^{(\ell)}(\omega)| \le \kappa_1(1 + |\omega|)^N,
\end{equation}
\item\label{enAttCoeffHol}
there exists a holomorphic continuation $\tilde\kappa:\overline{\H^+}\to\overline{\H^+}$ of $\kappa$ on the upper half plane, that is, $\tilde\kappa\in C(\overline{\H^+}; \overline{\H^+})$ with $\tilde\kappa|_{\R} = \kappa$ and $\tilde\kappa : \H^+ \to \overline{\H^+}$ is holomorphic; with
\[ |\tilde\kappa(z)|\le \tilde\kappa_1(1+|z|)^{\tilde N}\quad\text{for all}\quad z\in\overline{\H^+} \]
for some constants $\tilde\kappa_1>0$ and $\tilde N\in\N$.
\item\label{enAttCoeffSymm}
we have the symmetry $\kappa(-\omega) = -\overline{\kappa(\omega)}$ for all $\omega\in\R$.
\end{enumerate}
\end{definition}

The meaning of these three conditions are mentioned in \cite{ElbSchShi17}. Here we briefly introduce where we will use them. The first condition is used for proving the solution $p^a$ of attenuated wave equation \autoref{eqWaveEquationAttenuation} belongs to the Schwartz space when the observation surface is a plane. The second condition is to prove the continuity of the operator which relates $p(t, \vxi)$ and $p^a(t, \vxi)$.

\begin{definition}\label{deWeakAttenuation}
We call an attenuation coefficient $\kappa\in C^\infty(\R; \overline\H)$ a \emph{weak attenuation coefficient} if it is of the form
\[ \kappa(\omega) = \frac\omega c+\i\kappa_\infty+\kappa_*(\omega), \quad\omega\in\R, \]
for some constants $c>0$ and $\kappa_\infty\ge0$ and a bounded function $\kappa_*\in C^\infty(\R)\cap L^2(\R)$.
\end{definition}

For the plane case, the boundedness of $\Im \kappa_{*}$ is needed. For the sphere case, $\kappa_{*}\in L^2(\R)$ is used for the doing the taylor expansion.

Since we're considering the equation in the space of distributions, we will first write the attenuated wave equations in a weak form. A theorem from \cite{ElbSchShi17} guarantees the existence and uniqueness of the weak solution.

%$\mathcal A_\kappa:\mathcal S'(\R\times\R^3)\to\mathcal S'(\R\times\R^3)$ be the corresponding attenuation operator. Let further
\begin{proposition}\label{thSolAttWave}
Let $\kappa$ be an attenuation coefficient and  $h\in \mathcal{S}(\R^3)$ with Schwartz space $\mathcal{S}$ defined in \autoref{SchwartzSpace}.
	
Then, the attenuated wave equation in the frequency domain
\begin{equation}\label{eqSolAttWaveEq}
\left\langle\kappa^2(\omega) \Ff_1 p^a,\vartheta\right \rangle_{\mathcal S',\mathcal S}+\left\langle \Ff_1 p^a,\Delta\vartheta\right \rangle_{\mathcal S',\mathcal S} = -\int_{\R\times\R^3}\i\omega h(\vx)\Ff_1\vartheta(\omega,\vx)\d\omega\,\d \vx,\quad\vartheta\in\mathcal S(\R\times\R^3),
\end{equation}
has a unique solution $p^a\in\mathcal S'(\R\times\R^3)$ with $\supp p^a\subset[0,\infty)\times\R^3$ and space of tempered distribution $\mathcal S'$ defined in \autoref{DistributionSpace}.

Moreover, $p^a$ is of the form
\begin{equation}\label{eqSolAttWave}
\left\langle p^a,\varphi\right \rangle_{\mathcal S',\mathcal S} =  \int_{-\infty}^\infty\int_{\R^3}\int_{\R^3}G_\kappa(\omega,\vx-\vy)h(\vy)\d \vy\,\Ff_1 \varphi(\omega; \vx)\d \vx\d\omega,
\end{equation}
where $\Ff_1 \varphi$ denotes the Fourier transform of $\varphi$ with respect to $t$ and $G$ denotes the integral kernel
\begin{equation*}\label{eqSolAttWaveKernel}
G_\kappa(\omega,\vx) = -\frac{\i\omega}{4\pi\sqrt{2\pi}}\frac{\e^{\i\kappa(\omega)|\vx|}}{|\vx|},\quad\omega\in\R,\;\vx\in\R^3\setminus\{0\}.
\end{equation*}
\end{proposition}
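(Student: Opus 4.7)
The plan is to construct $p^a$ explicitly via the integral formula \autoref{eqSolAttWave}, verify that it defines an element of $\mathcal S'(\R\times\R^3)$ satisfying \autoref{eqSolAttWaveEq}, and then establish uniqueness by Fourier analysis in the time variable.

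I would begin by formally taking $\Ff_1$ of the attenuated wave equation. Since $\Ff_1[\delta'(t)]=-\i\omega/\sqrt{2\pi}$ and $\Ff_1[\mathcal A_\kappa p^a]=-\kappa^2(\omega)\Ff_1 p^a$ by \autoref{eqAttenuationOp}, the problem reduces, for each $\omega\in\R$, to the Helmholtz problem
\[
(-\Delta - \kappa^2(\omega))\,u(\omega,\cdot) \;=\; -\tfrac{\i\omega}{\sqrt{2\pi}}\,h, \qquad u:=\Ff_1 p^a.
\]
Condition \emph{(ii)} of \autoref{deAttCoeff} ensures $\Im\kappa(\omega)\ge 0$, so the outgoing fundamental solution of $-\Delta-\kappa^2(\omega)$ on $\R^3$ is $\e^{\i\kappa(\omega)|\vx|}/(4\pi|\vx|)$. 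Convolving with $h$ and inserting the source factor gives the candidate formula \autoref{eqSolAttWave} with the stated kernel $G_\kappa$.

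Next I would verify that the right-hand side of \autoref{eqSolAttWave} defines a tempered distribution. Because $\Im\kappa\ge 0$ we have $|\e^{\i\kappa(\omega)|\vx-\vy|}|\le 1$; combined with the Schwartz decay of $h$, the convolution $(G_\kappa(\omega,\cdot)*h)(\vx)$ is rapidly decreasing in $\vx$ and grows at most linearly in $\omega$, so pairing with $\Ff_1\varphi$ for $\varphi\in\mathcal S(\R\times\R^3)$ converges absolutely. The weak equation \autoref{eqSolAttWaveEq} is then obtained by Fubini together with the distributional identity $(-\Delta-\kappa^2(\omega))G_\kappa(\omega,\cdot)=-\tfrac{\i\omega}{\sqrt{2\pi}}\delta_0$ on $\R^3$, after two integrations by parts in $\vx$ against the test function $\vartheta$. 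For the support statement, I would invoke Paley--Wiener in time: by \emph{(ii)} the map $z\mapsto G_\kappa(z,\vx)$ extends holomorphically to $\H^+$ with polynomial bounds and $|\e^{\i\tilde\kappa(z)|\vx|}|\le 1$, so $\Ff_1 p^a$ is the boundary value of such a function, forcing $\supp p^a\subset[0,\infty)\times\R^3$.

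For uniqueness, suppose $q\in\mathcal S'(\R\times\R^3)$ solves the homogeneous version of \autoref{eqSolAttWaveEq} with $\supp q\subset[0,\infty)\times\R^3$. Paley--Wiener allows $v:=\Ff_1 q$ to be extended holomorphically, as a distribution-valued function, to $\omega\in\H^+$, where it satisfies $(-\Delta-\tilde\kappa^2(\omega))v(\omega,\cdot)=0$ in $\mathcal S'(\R^3)$. For $\omega\in\H^+$ with $\Re\tilde\kappa(\omega)\Im\tilde\kappa(\omega)\neq 0$ one has $|\vxi|^2-\tilde\kappa^2(\omega)\neq 0$ for every $\vxi\in\R^3$, so applying the spatial Fourier transform forces $v(\omega,\cdot)=0$ at such $\omega$; holomorphy then propagates this to all of $\H^+$, and the boundary value on $\R$ vanishes, giving $q=0$. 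The principal obstacle is exactly this uniqueness step: justifying the distributional Paley--Wiener extension and the passage to ``generic $\omega$'' requires careful use of the polynomial-growth hypotheses in \autoref{deAttCoeff}, and this is where the result of \cite{ElbSchShi17} does the heavy lifting.
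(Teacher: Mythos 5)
The paper does not actually prove this proposition: it is imported wholesale from \cite{ElbSchShi17} (``A theorem from \cite{ElbSchShi17} guarantees the existence and uniqueness of the weak solution''), so there is no in-paper argument to match yours against. Your sketch is a reasonable self-contained reconstruction of the standard argument --- reduce to a Helmholtz problem for each $\omega$, use the outgoing fundamental solution $\e^{\i\kappa(\omega)|\vx|}/(4\pi|\vx|)$ together with $\Im\kappa\ge0$ to get the kernel $G_\kappa$ and absolute convergence, and use Paley--Wiener in $t$ for the support claim --- and the existence half is essentially complete as written. You should, however, also note that it is condition \emph{(i)} of \autoref{deAttCoeff} (polynomially bounded derivatives) that makes multiplication by $\kappa^2(\omega)$ a well-defined continuous map on $\mathcal S'$, so that the left-hand side of \autoref{eqSolAttWaveEq} even makes sense.

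The one place where your argument has a genuine gap is the uniqueness step, in two respects. First, you pass from the homogeneous equation on $\R\times\R^3$ to the identity $(-\Delta-\tilde\kappa^2(z))v(z,\cdot)=0$ for $z\in\H^+$ without justification: the equation is only given on the real frequency axis, and propagating it into the upper half-plane requires knowing that $\mathcal A_\kappa$ is causal (preserves supports in $[0,\infty)$), which is precisely what the holomorphic extension in condition \emph{(ii)} delivers and what the cited reference establishes; you flag this as ``heavy lifting'' but it is the crux, not a technicality. Second, your genericity condition $\Re\tilde\kappa(\omega)\Im\tilde\kappa(\omega)\neq0$ can fail on all of $\H^+$ (e.g.\ $\tilde\kappa$ purely imaginary, which is admissible under \autoref{deAttCoeff}); the correct condition is $\tilde\kappa^2(z)\notin[0,\infty)$, whose failure on all of $\H^+$ forces $\tilde\kappa^2$ to be a nonnegative real constant and hence, by the symmetry \emph{(iii)}, $\kappa\equiv0$, contradicting the hypothesis that $\kappa$ is non-zero. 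With those two repairs the uniqueness argument closes.
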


According to \autoref{thSolAttWave}, the solution $p^a$ of the attenuated wave equation is given by \autoref{eqSolAttWave}. This means that the temporal Fourier transform of $p^a$ is the regular distribution corresponding to the function
\begin{equation}\label{freqsol-att}
\Ff_1 p^a(\omega, \vxi) = -\int_{\R^3} \frac{\i \omega}{4\pi\sqrt{2\pi}}\frac{\e^{\i\kappa(\omega)|\vxi-\vy|}}{|\vxi-\vy|}h(\vy)\d \vy.
\end{equation}

Specifically, when there is no attenuation, which means $\kappa(\omega)=\omega$, the attenuated wave equation \autoref{eqSolAttWaveEq} reduces to the standard wave equation \autoref{eqWaveEquation}. Consequently, the solution $p$ of the standard wave equation \autoref{eqWaveEquation} is given by plugging $\kappa(\omega)=\omega$ into the formula \autoref{eqSolAttWave}. Similarly, the temporal Fourier transform of $p$ is the regular distribution corresponding to the function%the function the temporal Fourier transform of $p$ is the regular distribution corresponding to the function
\begin{equation}\label{freqsol-noatt}
\Ff_1 p(\omega, \vxi) = -\int_{\R^3} \frac{\i \omega}{4\pi\sqrt{2\pi}}\frac{\e^{\i\omega|\vxi-\vy|}}{|\vxi-\vy|}h(\vy)\d \vy.
\end{equation}

\section{Explicit reconstruction formulas}\label{sec:2formulas}

In this section, we introduce a general reconstruction formula for arbitrary observation surfaces and provide two specific examples: one for a planar surface and another for a spherical surface. The corresponding theorems are stated below, while the proofs of their validity are given separately in \autoref{sec:planar} and \autoref{sec:sphere}.

%\begin{definition}
%	Suppose that $u\in L^2(0,+\infty)$. The \emph{Fourier-Laplace Transform} of $u$ is defined as
%	\[
%	(\mathcal{F}u)(\omega)=\int_{0}^{\infty} u(t)e^{i\omega t}\,dt \qquad\text{ for } \omega\in\mathbb{H}^+.
%	\]
%	According to the Paley-Wiener Theorem, $(\mathcal{F}u)(\omega)$ is analytic with respect to $\omega$ in $\mathbb{H}^+$.
%\end{definition}
\begin{theorem}\label{thm:main}
	\begin{enumerate}
		\item Suppose that $\kappa$ is an attenuation coefficient as defined in Definition \ref{deAttCoeff}. Suppose further that the analytic continuation $\tilde\kappa$ is a Nevalinna-Herglotz function (see, for example, (2.19) in \cite{ElbSchShi17}). Then $\tilde\kappa$ is injective on $\overline{\H^+}$.
		\item There exists an analytic function $\kappa^{-1}:\kappa(\overline{\H^+})\to\overline{\H^+}$ serving as the inverse function of $\tilde\kappa$.
        \item Let $\kappa$ be as above, and suppose further that $\kappa^{-1}$ can be analytically continued to $\overline{\H^+}$ (note that the values of $\kappa^{-1}$ on $\H^+\setminus\kappa(\overline{\H^+})$ belongs to the lower half-plane). Let $p(t,x)$ be the unique solution of the (un-attenuated) wave equation \autoref{eqWaveEquation},
		and let $p^a(t,x)$ be the unique solution of the corresponding attenuated wave equation \autoref{eqWaveEquationAttenuation}.
		Then $(\mathcal{F}p)(\omega,x)$ and $(\mathcal{F}p^a)(\omega,x)$ are analytic functions of $\omega$ in $\mathbb{H}^+$.
		\item Moreover, the following relationships between $\Ff_1p$ and $\Ff_1p^a$ holds for all $x\in\mathbb{R}^3$:
		\begin{align}
		(\mathcal{F}p^a)(\omega, x)=\frac{\omega}{\kappa(\omega)}(\mathcal{F}p)(\kappa(\omega), x), &\text{ for all } \omega\in\overline{\mathbb{H}^+}. \label{eq:main:1}\\
		(\mathcal{F}p)(\omega, x)=\frac{\omega}{\kappa^{-1}(\omega)}(\mathcal{F}p^a)(\kappa^{-1}(\omega), x), &\text{ for all } \omega\in\overline{\mathbb{H}^+}. \label{eq:main:2}
		\end{align}
	\end{enumerate}
\end{theorem}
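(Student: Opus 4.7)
My plan is to establish the four parts in the order stated, relying throughout on the explicit frequency-domain representations \autoref{freqsol-noatt} and \autoref{freqsol-att} from \autoref{thSolAttWave}, together with the structural consequences of the Nevanlinna--Herglotz assumption.

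\textbf{Parts (i) and (ii).} For injectivity I would start from the canonical integral representation of a Nevanlinna--Herglotz function, $\tilde\kappa(z)=\alpha+\beta z+\int_\R\bigl(\tfrac{1}{t-z}-\tfrac{t}{1+t^2}\bigr)\d\mu(t)$, and combine it with the polynomial growth and the symmetry $\kappa(-\omega)=-\overline{\kappa(\omega)}$ from \autoref{deAttCoeff} (which extends by Schwarz reflection to $\tilde\kappa(-\bar z)=-\overline{\tilde\kappa(z)}$ on $\overline{\H^+}$) to constrain $\alpha$, $\beta$, and $\mu$. Because $\kappa$ is smooth on all of $\R$, the measure $\mu$ is absolutely continuous with smooth density $\rho(t)=\Im\kappa(t)/\pi\ge 0$, so that $\tilde\kappa'(z)=\beta+\int_\R \rho(t)/(t-z)^2\,\d t$ admits a clean form. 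The injectivity argument I have in mind is then: assuming $\tilde\kappa(z_1)=\tilde\kappa(z_2)$ with distinct $z_1,z_2\in\H^+$, integrate $\tilde\kappa'$ along a segment joining them and use the positivity coming from the representation to derive a contradiction. Once injectivity is secured, $\tilde\kappa'$ is nowhere zero on $\H^+$ and the inverse function theorem supplies the analytic branch $\kappa^{-1}:\tilde\kappa(\overline{\H^+})\to\overline{\H^+}$.

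\textbf{Part (iii).} Analyticity of $\Ff_1 p$ and $\Ff_1 p^a$ on $\H^+$ is immediate from the integrals \autoref{freqsol-noatt} and \autoref{freqsol-att}: for $\omega\in\H^+$ one has $|\e^{\i\omega|x-y|}|=\e^{-(\Im\omega)|x-y|}\le 1$, and the Nevanlinna--Herglotz hypothesis gives $\Im\tilde\kappa(\omega)\ge 0$ so that $|\e^{\i\tilde\kappa(\omega)|x-y|}|\le 1$ as well. Since $h\in\mathcal{S}(\R^3)$, both integrands are dominated locally uniformly in $\omega$, and holomorphy follows by differentiation under the integral sign (or, equivalently, by Morera's theorem together with Fubini).

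\textbf{Part (iv).} The identity \autoref{eq:main:1} falls out by direct substitution: replacing $\omega$ by $\kappa(\omega)$ in \autoref{freqsol-noatt} and multiplying by $\omega/\kappa(\omega)$ reproduces \autoref{freqsol-att} exactly. For \autoref{eq:main:2}, substituting $\omega\mapsto\kappa^{-1}(\omega)$ into \autoref{eq:main:1} yields the identity immediately on $\tilde\kappa(\overline{\H^+})$; the hypothesis that $\kappa^{-1}$ extends analytically to all of $\overline{\H^+}$ together with part (iii) then propagates it to $\overline{\H^+}$ by the identity theorem and continuity up to the boundary. The \textbf{main obstacle} I expect is injectivity in part (i): the Nevanlinna--Herglotz property by itself is not automatically sufficient without further constraints, so one must genuinely exploit the global smoothness of $\kappa$ on $\R$ together with the growth and symmetry conditions of \autoref{deAttCoeff}. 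A secondary subtlety sits inside \autoref{eq:main:2}: when $\kappa^{-1}(\omega)$ falls into the lower half-plane, $\Ff_1 p^a(\kappa^{-1}(\omega),x)$ is not directly covered by \autoref{freqsol-att}, so one must first analytically extend $\Ff_1 p^a$ to that region --- which is nevertheless legitimate because $\Im\kappa(\kappa^{-1}(\omega))=\Im\omega\ge 0$ preserves convergence of the defining integral at the shifted argument.
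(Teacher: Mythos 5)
Your proposal follows essentially the same route as the paper: part \textit{(i)} via the Herglotz integral representation and positivity of $\Re\tilde\kappa'$ (your segment-integration argument for univalence is precisely the proof of the Noshiro--Warschawski theorem that the paper invokes), and parts \textit{(iii)}--\textit{(iv)} by direct substitution into the explicit kernels \autoref{freqsol-noatt} and \autoref{freqsol-att}, which is exactly what the paper's terse ``comparing the equations'' amounts to. The soft spot you flag is shared with the paper's own write-up: the representation only yields $\Re\tilde\kappa'\ge A>0$ on the real axis, since $\Re\bigl((t-z)^{-2}\bigr)=\bigl((t-\Re z)^2-(\Im z)^2\bigr)/|t-z|^4$ changes sign once $\Im z>0$, so neither argument as written actually establishes the derivative positivity on all of $\H^+$ needed to conclude injectivity off the real line.
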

\begin{proof}
\begin{enumerate}
	\item This is a proof that $\tilde\kappa$ is injective on $\R$. We use the Herglotz Representation Theorem to write $\tilde\kappa$ as
	\[
	\tilde\kappa(z)=Az+B+\int_{-\infty}^{\infty}\frac{1+z\nu}{\nu-z}d\sigma(\nu),
	\]
	for some $A>0, B\in \mathbb{R}$, and some bounded increasing function $\sigma:\mathbb{R}\to\mathbb{R}$.
	Then we have
	\[
	\operatorname{Re}\kappa'(z)=A+\int_{-\infty}^{\infty}\frac{(1+\nu^2)\left((\nu-\operatorname{Re}z)^2-(\operatorname{Im}z)^2\right)}{|\nu-z|^4}d\sigma(\nu).
	\]
	
	For $z\in\mathbb{R}$ we have $\operatorname{Im}z=0$, therefore the integrand is non-negative, and therefore $\operatorname{Re}\kappa'(z)\geq A>0$. This means $\operatorname{Re}\kappa(z)$ is monotonic increasing on $\mathbb{R}$, and therefore $\kappa$ is injective on $\R$. To prove that $\tilde\kappa$ is injective on $\overline{\H^+}$, we can use similar arguments on $\kappa'(z)$ and invoke the Noshiro–Warschawski theorem \cite{ChuGev03}. 

	\item This is an direct consequence of (i).

	\item This can be inferred from (ii) and the Paley-Wiener Theorem.

	\item This can be proved by comparing the equations \autoref{eqWaveEquation} and \autoref{eqWaveEquationAttenuation} and using (iii).
\end{enumerate}
\end{proof}

We will now present two examples: the case of a planar observation surface and the case of a spherical observation surface. Let $p^a(t, \vxi)$ for $(t, \vxi)\in \R^+\times \partial\Omega$ be the measurement on the observation surface $\partial\Omega$, and $q^a(t, \vxi)$ be the integrated measurement,
\begin{equation}\label{IntegratedMeasurement}
q^a(t, \vxi)=\int_{-\infty}^t p^a(\tau, \vxi)\d\tau.
\end{equation}
and similarly,
\begin{equation}\label{Integrated-p}
q(t, \vxi)=\int_{-\infty}^t p(\tau, \vxi)\d\tau.
\end{equation}

\begin{theorem}\label{thm:plane}
Suppose that the observation surface $\partial\Omega$ is a plane given by $\R^2\times\{0\}$. $q^a(t, \vxi)\in\S'(\R^+\times\R^2)$ is the integrated measurement which is defined in \autoref{IntegratedMeasurement}, and $\kappa(\omega)$ is attenuation coefficient satisfying \autoref{deAttCoeff} and \autoref{deWeakAttenuation}. If $q^a(t,\vxi)\in\S'_{\kappa^{-1}}(\R^+\times\R^2)$ which is defined in \autoref{IntegratedMeasurement}, then the source term $h(\vx)\in\S'(\R^2\times\R^+)$ with $\vx=(\vx_{12}, \vx_3)$ in \autoref{eqWaveEquationAttenuation} is reconstructed by
\begin{equation}\label{eq:reconstructionPlane}
\Ff h(\sigma,\varrho)=8\pi\i(\Ff q^a)(\kappa^{-1}(\sgn\varrho\sqrt{|\sigma|^2+\varrho^2}), \sigma)
\end{equation}
where $\Ff h(\sigma,\varrho)$ is the Fourier transform of $h(\vx)$ with respect to $\vx_{12}$ and $\vx_{3}$.

\end{theorem}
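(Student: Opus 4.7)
The plan is to reduce the attenuated planar inversion to the un-attenuated one by means of the frequency-substitution supplied by \autoref{thm:main}, and then to derive the un-attenuated planar formula by a direct Fourier computation based on the Weyl (plane-wave) representation of the Helmholtz Green function.

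\textbf{Step 1.} I first convert \autoref{eq:main:1} from a statement about $p,p^a$ into the analogous statement about the integrated measurements $q,q^a$. Since $q,q^a$ are temporal antiderivatives of $p,p^a$ with support in $t\ge 0$, one has $\Ff_1 p=-\i\omega\,\Ff_1 q$ and the same with superscripts $a$. Substituting these two identities into \autoref{eq:main:1} and cancelling $-\i\omega$ yields the clean identity
\[
\Ff_1 q^a(\omega,\vxi)=\Ff_1 q(\kappa(\omega),\vxi),
\]
and hence, by \autoref{thm:main}(ii), $\Ff_1 q(\omega,\vxi)=\Ff_1 q^a(\kappa^{-1}(\omega),\vxi)$.

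\textbf{Step 2.} I next establish the un-attenuated planar formula
\[
\Ff h(\sigma,\varrho)=8\pi\i\,\Ff q\bigl(\sgn\varrho\sqrt{|\sigma|^2+\varrho^2},\sigma\bigr).
\]
Starting from \autoref{freqsol-noatt} restricted to $\vxi_3=0$, I take the 2D Fourier transform in $\vxi_{12}$ and invoke the Weyl representation
\[
\frac{e^{\i\omega\sqrt{|\vxi_{12}|^2+\vy_3^2}}}{\sqrt{|\vxi_{12}|^2+\vy_3^2}}=\frac{\i}{2\pi}\int_{\R^2}\frac{e^{-\i\sigma\cdot\vxi_{12}+\i\sqrt{\omega^2-|\sigma|^2}\,|\vy_3|}}{\sqrt{\omega^2-|\sigma|^2}}\,\d\sigma,
\]
with the branch $\Im\sqrt{\cdot}\ge 0$. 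The remaining $\vy_3$-integral is a one-dimensional Fourier transform of $(\Ff_2 h)(\sigma,\cdot)$, and the change of variables $\omega=\sgn\varrho\sqrt{|\sigma|^2+\varrho^2}$ (with $\sgn\varrho$ recording the sign of the normal exponential in $\vy_3$, equivalently which half-space contains $\supp h$) collapses the calculation to the identity displayed above. The numerical factor $8\pi\i$ is tracked through the $(2\pi)^{-n/2}$ normalisations of \autoref{FourierTransform}, the coefficient $-\i\omega/(4\pi\sqrt{2\pi})$ in the Green kernel, and the $1/(-\i\omega)$ introduced by $p=\partial_t q$.

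\textbf{Step 3.} Substituting the Step 1 identity $\Ff q(\omega,\sigma)=\Ff q^a(\kappa^{-1}(\omega),\sigma)$ into the Step 2 formula at $\omega=\sgn\varrho\sqrt{|\sigma|^2+\varrho^2}$ produces exactly \autoref{eq:reconstructionPlane}.

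\textbf{Main obstacle.} The analytic core of Step 2 is a routine exercise once the Weyl representation is invoked; the genuine difficulty is \emph{distributional}. The object $\Ff q^a$ is only a tempered distribution, and the analytic map $\kappa^{-1}$ is a priori defined only on $\kappa(\overline{\H^+})$, which may be a proper subset of $\overline{\H^+}$. The hypothesis $q^a\in\S'_{\kappa^{-1}}$ is precisely what turns the pullback $(\kappa^{-1})^*\Ff q^a$ into a well-defined tempered distribution along the real axis, with enough regularity that its restriction to the non-smooth curve $\omega=\sgn\varrho\sqrt{|\sigma|^2+\varrho^2}$ remains meaningful. Verifying this, using the polynomial-growth bound \autoref{eqAttCoeffPolBdd} together with the decomposition from \autoref{deWeakAttenuation} (boundedness of $\Im\kappa_\ast$ being the key ingredient on the planar side), is where the technical burden concentrates; once that distributional bookkeeping is in place, the Weyl-integral calculation and the final substitution are essentially formal.
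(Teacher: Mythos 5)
Your overall factorization --- relate $q$ and $q^a$ by the substitution $\omega\mapsto\kappa(\omega)$ in the temporal frequency variable, then invert the un-attenuated planar operator in the Fourier domain --- is the same as the paper's, but you implement both halves differently. Your Step 1 is, in substance, the paper's operator $A$ (\autoref{Thm-A}) together with \autoref{relation-Ra-R-A} and \autoref{Thm-reconstructionA}; deriving it from \autoref{thm:main} by cancelling $-\i\omega$ is legitimate and shorter. For the un-attenuated half the paper does not use the Weyl representation: it computes the adjoint $R^*$ in the frequency domain via a Hankel-transform identity, builds an explicit continuous inverse $E$ of $R^*$ on $\S_e$ by a Seeley-type extension, and obtains the formula by duality (\autoref{Thm-reconstructionR}). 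That machinery is what lets the paper make sense of the formula for $h\in\S'$ and prove bijectivity of $R$ onto $\S'_{\text{cone}}$; your Weyl computation is more elementary for Schwartz $h$, but, as you yourself note, it defers the entire distributional justification, and that is precisely where most of \autoref{sec:planar} lives.

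There is also one concrete error you should fix. The Weyl kernel carries the factor $1/\sqrt{\omega^2-|\sigma|^2}$, so after the $\delta(\sigma+\sigma')$ collapse and the one-sided $\vy_3$-integral you obtain $\Ff q(\omega,\sigma)=C\,\Ff h(\sigma,\varrho)/\varrho$ with $\varrho=\sgn\omega\sqrt{\omega^2-|\sigma|^2}$; inverting therefore produces $\Ff h(\sigma,\varrho)=C^{-1}\varrho\,\Ff q(\sgn\varrho\sqrt{|\sigma|^2+\varrho^2},\sigma)$, i.e.\ an extra Jacobian factor $\varrho$ (equivalently, $\omega\d\omega=\varrho\d\varrho$ in the paper's duality argument). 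Your displayed Step 2 identity omits this $\varrho$, so it does not follow from the computation you describe. The paper's own concluding Corollary in \autoref{sec:planar} reads $(\Ff h)(\sigma,\varrho)=8\pi\i\varrho(\Ff q^a)(\kappa^{-1}(\sgn\varrho\sqrt{\varrho^2+|\sigma|^2}),\sigma)$, with the $\varrho$ present; the statement \autoref{eq:reconstructionPlane} appears to have dropped it. Your sketch thus reproduces the typo rather than the formula your own method yields, and carrying out Step 2 carefully would expose the discrepancy.
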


\begin{theorem}\label{thm:sphere}
Suppose that the observation surface $\partial\Omega$ is a sphere given by $|\vxi|=\varpi$. $q^a(t, \vxi)\in H^1(\R^+\times\partial\Omega)$ is the integrated measurement which is defined in \autoref{IntegratedMeasurement}, and $\kappa(\omega)$ is attenuation coefficient satisfying \autoref{deAttCoeff} and \autoref{deWeakAttenuation}. Then the source term $h(\vx)\in L^2(\Omega_\varepsilon)$ in \autoref{eqWaveEquationAttenuation} is reconstructed by
\begin{equation}\label{eq:reconstructionSphere}
h(\vx)=(I+T)^{-1}\left(-\frac{1}{8\pi^2 \varpi}\Delta_{\vx}\int_{\Gamma}e^{\kappa_{\infty}|\vxi-\vx|}\frac{q^{a}(|\vxi-\vx|, \vxi)}{|\vxi-\vx|}dS(\vxi)\right)
\end{equation}
where $T$ is a compact operator from $L^2(\Omega_\epsilon)$ to $L^2(\Omega_\epsilon)$, and with the kernel
\begin{equation*}
\frac{1}{8\pi^2\varpi}\sum_{j=1}^{\infty}\frac{1}{j!}\Delta_\vx\int_{\partial\Omega}|\vxi-\vy|^{j-1}(\Ff^{-1} (\i\kappa_*(\omega))^j)(|\vxi-\vx|-|\vxi-\vy|)e^{\kappa_\infty(|\vxi-\vx|-|\vxi-\vy|)}\d S(\vxi).
\end{equation*}
\end{theorem}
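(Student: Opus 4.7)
The plan is to reduce the attenuated inversion to the classical one via a Taylor expansion in the weak attenuation correction $\kappa_*$, to identify the leading order term with the classical spherical filtered back-projection formula, and to absorb the higher order terms into a compact perturbation $T$ that can be inverted by a Fredholm argument.

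Starting from \autoref{thSolAttWave} and using $\Ff_1 q^a=(\i/\omega)\Ff_1 p^a$, I would first rewrite
\begin{equation*}
\Ff_1 q^a(\omega,\vxi) = \frac{1}{4\pi\sqrt{2\pi}}\int_{\R^3}\frac{\e^{\i\kappa(\omega)|\vxi-\vy|}}{|\vxi-\vy|}h(\vy)\d\vy.
\end{equation*}
Substituting $\kappa(\omega)=\omega/c+\i\kappa_\infty+\kappa_*(\omega)$, the exponential factorises as $\e^{\i\omega|\vxi-\vy|/c}\e^{-\kappa_\infty|\vxi-\vy|}\e^{\i\kappa_*(\omega)|\vxi-\vy|}$. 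Since $|\vxi-\vy|$ is uniformly bounded on $\partial\Omega\times\overline{\Omega_\varepsilon}$ and $\kappa_*\in L^\infty\cap L^2$, the Taylor series $\e^{\i\kappa_*(\omega)|\vxi-\vy|}=\sum_{j\geq 0}(\i\kappa_*(\omega))^j|\vxi-\vy|^j/j!$ converges absolutely and uniformly in the geometric variables. Applying $\Ff^{-1}$ termwise via the shift identity $\Ff^{-1}[g(\omega)\e^{\i\omega a}](t)=(\Ff^{-1}g)(t-a)$, I obtain the splitting $q^a=q^{a,0}+\sum_{j\geq 1}q^{a,j}$ with $q^{a,0}(t,\vxi)=\e^{-\kappa_\infty t}q(t,\vxi)$ and, up to a universal constant,
\begin{equation*}
q^{a,j}(t,\vxi)\propto\frac{1}{j!}\int_{\R^3}|\vxi-\vy|^{j-1}\e^{-\kappa_\infty|\vxi-\vy|}\bigl(\Ff^{-1}(\i\kappa_*)^j\bigr)(t-|\vxi-\vy|)h(\vy)\d\vy,
\end{equation*}
where $q$ denotes the unattenuated integrated measurement.

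Next I would invoke the classical spherical FBP identity (Finch--Patch--Rakesh type) for the unattenuated data, which in the present normalisation reads $h(\vx)=-\frac{1}{8\pi^2\varpi}\Delta_\vx\int_{\partial\Omega}q(|\vxi-\vx|,\vxi)/|\vxi-\vx|\,\d S(\vxi)$. The weight $\e^{\kappa_\infty|\vxi-\vx|}$ in the formula of \autoref{thm:sphere} exactly converts $q^{a,0}(|\vxi-\vx|,\vxi)$ back into $q(|\vxi-\vx|,\vxi)$, so the $j=0$ part of the bracketed back-projection operator $\mathcal{R}$ in \autoref{eq:reconstructionSphere} already reconstructs $h$ exactly. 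Substituting the expression for $q^{a,j}$ with $j\geq 1$ into $\mathcal{R}$ and exchanging the order of the surface integral over $\partial\Omega$ with the volume integral over $\vy$ by Fubini, the remaining contributions assemble into a linear integral operator on $L^2(\Omega_\varepsilon)$ whose kernel is precisely the one stated in the theorem. This yields the operator identity $\mathcal{R}q^a=(I+T)h$.

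The main obstacle will be proving compactness of $T$ on $L^2(\Omega_\varepsilon)$. Plancherel together with the elementary bound $|\kappa_*(\omega)|^{2j}\leq\|\kappa_*\|_{L^\infty}^{2j-2}|\kappa_*(\omega)|^2$ gives $\|\Ff^{-1}(\i\kappa_*)^j\|_{L^2}\leq\|\kappa_*\|_{L^\infty}^{j-1}\|\kappa_*\|_{L^2}$; combined with the $1/j!$ prefactor, the boundedness of $|\vxi-\vy|$ on $\partial\Omega\times\overline{\Omega_\varepsilon}$, and the boundedness of $\Omega_\varepsilon$, this should yield summable Hilbert-Schmidt bounds for each $T_j$, so that $T=\sum_{j\geq 1} T_j$ is compact. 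A secondary subtlety is the Laplacian $\Delta_\vx$: since the kernel depends on $\vx$ only through the radial variable $|\vxi-\vx|$ at each fixed $\vxi$, the chain rule reduces $\Delta_\vx$ to second-order derivatives of $\Ff^{-1}(\i\kappa_*)^j$ in its time argument, which must be interpreted distributionally or traded by an integration by parts on $\partial\Omega$. Finally, invertibility of $I+T$ follows from the Fredholm alternative together with injectivity of the attenuated forward map on $L^2(\Omega_\varepsilon)$ (implicit in the uniqueness part of \autoref{thSolAttWave}), giving the reconstruction $h=(I+T)^{-1}\mathcal{R}q^a$ claimed in \autoref{eq:reconstructionSphere}.
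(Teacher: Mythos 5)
Your derivation of the operator identity is essentially the paper's: the same Taylor expansion of $\e^{\i\kappa_*(\omega)|\vxi-\vy|}$ in the weak-attenuation correction, the same identification of the $j=0$ term (after the multiplication by $\e^{\kappa_\infty t}$, equivalently the weight $\e^{\kappa_\infty|\vxi-\vx|}$ in the back-projection) with the classical spherical FBP formula, and the same assembly of the $j\ge1$ terms into an integral operator $T$ with the stated kernel, yielding $B_pMq^a=(I+T)h$. Up to that point the proposal is correct and matches the paper.

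The genuine gap is the compactness of $T$. Your Hilbert--Schmidt estimate $\|\Ff^{-1}(\i\kappa_*)^j\|_{L^2}\le\|\kappa_*\|_{L^\infty}^{j-1}\|\kappa_*\|_{L^2}$ controls the \emph{undifferentiated} kernel, but the kernel of $T$ carries $\Delta_\vx$, and two derivatives of $r_j=\Ff^{-1}((\i\kappa_*)^j)$ in its time argument correspond on the Fourier side to multiplication by $-\omega^2$; since $(\i\kappa_*)^j$ is only assumed bounded and square-integrable, $\omega^2(\i\kappa_*(\omega))^j$ need not lie in $L^2$ or even define a locally integrable $r_j''$, so the differentiated kernel is not reached by your bound and may fail to be a function at all. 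You flag this as a ``secondary subtlety,'' but it is the core of the proof: the paper has to impose an \emph{additional} hypothesis not visible in the theorem statement, namely an asymptotic expansion $\kappa_*(\omega)\sim\sum_{m\ge1}\kappa_m\omega^{-m}$ as $\omega\to\infty$ (\autoref{eq:kappastarassumption2}), from which it deduces that all derivatives of $r_j$ are bounded on $\R^+$ (modulo distributions supported at the origin), establishes an asymptotic expansion of $F_0(\vx,\vy)$ in powers of $|\vx-\vy|$ near the diagonal, and only then concludes $|F_T(\vx,\vy)|\le C|\vx-\vy|^{-2}$, i.e.\ weak singularity, whence compactness by a cited theorem on weakly singular kernels. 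Without some such decay assumption on $\kappa_*$ your argument does not close. A smaller issue: invertibility of $I+T$ via the Fredholm alternative requires injectivity of $I+T$ itself, which does not follow from the uniqueness statement in \autoref{thSolAttWave} (that concerns uniqueness of $p^a$ given $h$, not injectivity of $h\mapsto q^a$, and one would additionally need injectivity of the back-projection on the range of $R^a$); the paper is equally silent on this point, but your claim that it is ``implicit'' there is not justified.
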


\begin{remark}
The outline of the two reconstruction formulas can be summarized in the diagram below.
\[
\xymatrix{
h\ar[d]_{R}\ar[r]^{I+T}\ar[rd]^{R^a}&h^a\\
q\ar[r]_{A}&q^a\ar[u]_{B_pM}
}
\]
Here the formula \autoref{eq:reconstructionPlane} follows the lower-left route $q^a\to q\to h$, while the formula \autoref{eq:reconstructionSphere} follows the upper-right route $q^a\to h^a\to h$. The operators in this diagram will be defined in \autoref{sec:planar} and \autoref{sec:sphere}.
\end{remark}

\section{For a plane observation surface}\label{sec:planar}

In this section, we will demonstrate the viability of the reconstruction formula for a planar observation surface. Let $\Omega=\R^2\times \R^{+}\subset \R^3$ be a half-space, and then $\partial\Omega=\R^2\times\{0\}$ is the plane of observation.

\subsection{Preliminaries and notations}\label{preliminaries}

\begin{definition}\label{SchwartzSpace}
The \emph{Schwartz space} on $\R^n$ is the space consisting of smooth functions all of whose derivatives are rapidly decreasing, that is,
\begin{equation*}
\mathcal{S}(\R^n) = \{f\in C^{\infty}(\R^n): \|f\|_{\alpha, \beta}<\infty, \forall \alpha, \beta\in Z_+^n \},
\end{equation*}
where $\alpha, \beta$ are multi-indices, $C^{\infty}(\R^n)$ is the set of smooth functions from $\R^n$ to $\C$ where $\C$ is the complex plane, and
\begin{equation*}
\|f\|_{\alpha, \beta}=\sup_{\vx\in \R^n}|\vx^\beta D^\alpha f(\vx)|
\end{equation*}
are a family of semi-norms on the space.
\end{definition}

\begin{definition}\label{DistributionSpace}
The \emph{space of all tempered distribution} on $\R^n$, denoted by $\mathcal{S}'(\R^n)$, is defined as the dual space of the Schwarz space $\mathcal{S}(\R^n)$. That is, it is the set of all continuous linear functionals
\begin{equation*}
f: \mathcal{S}(\R^n)\to \C,
\end{equation*}
where $\C$ denotes the complex plane.
\end{definition}

For a closed set $D\subset \R^n$, the spaces $\S(D)$ and $\S'(D)$ denotes the Schwartz functions and the tempered distributions with $\emph{support}$ inside $D$, respectively. We emphasize that any subspace of $\S'(\R^n)$ is able to act on any subspace of $\S(\R^n)$.

It is conventional to write $\langle f, \varphi\rangle$ for the result of the distribution $f\in S'(\R^n)$ acting on $\varphi\in S(\R^n)$. In this notation,
\begin{itemize}
\item $f$ is \emph{linear} means that $\langle f, a\varphi+b\psi\rangle = a \langle f, \varphi\rangle + b \langle f, \psi\rangle$ for all $\varphi, \psi \in S(\R^n)$ and all $a, b\in \C$.
\item $f$ is \emph{continuous} means that if $\varphi=\lim_{n\to\infty}\varphi_n$ in $S(\R)$, then $\langle f, \varphi\rangle = \lim_{n\to\infty}\langle f, \varphi_n\rangle$.
\item In this paper, we use the \emph{weak topology} on the space of tempered distributions. That is, $f_n\to f$ in $\S'(\R)$ means that $\lim_{n\to\infty}\langle f_n,\varphi\rangle=\langle f,\varphi\rangle$ for all $\varphi\in\S(\R)$
\item Operator $A: S\to S'$ is continuous means that if $\psi=\lim_{n\to\infty}\psi_n$ in $S(\R)$, then
\begin{equation}
\langle A\psi, \varphi\rangle = \lim_{n\to\infty}\langle A\psi_n, \varphi\rangle
\end{equation}
\item A locally integrable function $g\in L^1_{loc}(\R^n)$ can be identified with a distribution in $\mathcal{S}'(\R^n)$ by considering the inner product
\begin{equation*}
  \langle g,\varphi\rangle=\int_{\R^n}g(\vx)\bar{\varphi}(\vx)\d \vx.
\end{equation*}
\item A distribution $f\in\S'(\R^n)$ is \emph{regular} in an open region $U$ if there exists a locally integrable function $f_0(\vx)$ on $U$, such that
\begin{equation*}
  \langle f,\varphi\rangle=\int_{U}f_0(\vx)\bar{\varphi}(\vx)\d \vx
\end{equation*}
holds for all $\varphi\in\S(\R^n)$ with support inside $U$.
\end{itemize}

The Fourier transform $\Ff$ is an isomorphism of $\mathcal{S}(\R^n)$, which means $\Ff: \mathcal{S}(\R^n)\to \mathcal{S}(\R^n)$. This definition can be extended to $\mathcal{S'}(\R^n)$ by defining $\langle\Ff f, \varphi\rangle=\langle f,\Ff\varphi\rangle$ for all $\varphi\in\mathcal{S}(\R^n)$. Under this definition, the Fourier transform is also an isomorphism of $\mathcal{S}'(\R^n)$, which means $\Ff: \mathcal{S}'(\R^n)\to \mathcal{S}'(\R^n)$.

Since when the observation surface is a plane (and therefore not compact), the measurements will generally does not belong to spaces such as $L^2$. Therefore, we will define the relevant operators on the space of tempered distributions.

\begin{definition}\label{HankelTransform}
The \emph{Hankel transform} of order $\nu$ of a function $f(r)$ is given by
\begin{equation*}
F_{\nu}(k)=\int_0^\infty f(r)J_\nu(kr)r\d r
\end{equation*}
where $J_{\nu}$ is the Bessel function of the first kind of order $\nu$ with $\nu\geqslant -\frac{1}{2}$.
\end{definition}

There is an important relation between Hankel transform and Fourier transform: If the function $f(r)$ is radially symmetric with respect to $r$, then its Fourier transform is also radially symmetric, which is given by
\begin{equation}\label{Hankel-Fourier}
k^{n/2-1}(\Ff f)(k)=(2\pi)^{n/2}\int_0^\infty r^{n/2-1}f(r)J_{n/2-1}(kr)r \d r,
\end{equation}
where $n$ is the dimension of space.

\subsection{Direct problem}

For a vector in $\vx \in\R^2\times\R^+$, we will write it as $\vx=(\vx_{12}, \vx_3)$ where $\vx_{12}\in\R^2$ and $\vx_3\in\R^+$. Since $\partial\Omega$ is a plane $\R^2\times\{0\}$, then for $\vxi\in\partial\Omega$, we slightly abuse the notation to use $\vxi$ to denote its first two components for convenience.

Define the \emph{integrated photoacoustic operator} $R$ by
\begin{equation}\label{R}
R: \mathcal{S}(\R^+\times \R^2)\to \mathcal{S}'(\R^+\times \R^2),\; (Rh)(t, \vxi) = q(t, \vxi).
\end{equation}
where $q(t, \vxi)$ is defined in \autoref{Integrated-p}. It is well-known that the operator $R$ is equal to the spherical mean operator in $\R^3$ (see, for example, \cite{KucKun08}).

We also define the \emph{attenuated integrated photoacoustic operator} by
\begin{equation}\label{Ra}
R^a: \mathcal{S}(\R^+\times \R^2)\to \mathcal{S}'(\R^+\times \R^2),\; (R^ah)(t, \vxi) = q^a(t, \vxi).
\end{equation}
where $q^a(t, \vxi)$ is defined in \autoref{IntegratedMeasurement}.

In this section, we will first give expressions of the operators $R$ and $R^a$ which act on the Schwartz space $\mathcal{S}(\R^+\times \R^2)$. Then we extend their domain to the space of tempered distribution $\mathcal{S}'(\R^+\times \R^2)$ by considering their adjoint operators $R^*$ and $(R^{a})^*$. After this, we will give a formula for the inverse of the adjoint operators, and use that to construct an inverse of the operator $R^a$.

Using the formula for the solution of standard wave equation \autoref{freqsol-noatt}, $R$ is written by the following theorem.
\begin{theorem}
For any $h(\vx)\in\mathcal{S}(\R^3)$, $R$ is defined in \autoref{R}, then
\begin{equation}\label{R-represent}
(Rh)(t, \vxi)= \frac{1}{4\pi}\int_{S^2} t h(\vxi+t\eta)\d S(\eta)
\end{equation}
where $\vxi\in\partial\Omega$ and $\eta\in S^2$ which denotes the unit sphere.
\end{theorem}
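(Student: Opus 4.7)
The plan is to derive the formula by starting from the explicit expression for $\Ff_1 p$ in the frequency domain (equation \autoref{freqsol-noatt}), passing to $\Ff_1 q$ via the primitive relation, and then inverting the temporal Fourier transform. Since $h\in\mathcal{S}(\R^3)$, all manipulations will be rigorous once the temporal integrals are interpreted appropriately, and the spherical mean formula will fall out after a change to polar coordinates around $\vxi$.

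Concretely, I would proceed as follows. First, since $q(t,\vxi)=\int_{-\infty}^{t} p(\tau,\vxi)\d\tau$ with $q(t,\vxi)=0$ for $t<0$, we have $\partial_t q = p$, which under the Fourier convention of \autoref{FourierTransform} gives $-\i\omega\,\Ff_1 q = \Ff_1 p$. Combined with \autoref{freqsol-noatt}, this yields
\begin{equation*}
\Ff_1 q(\omega,\vxi) \;=\; \frac{1}{4\pi\sqrt{2\pi}}\int_{\R^3}\frac{\e^{\i\omega|\vxi-\vy|}}{|\vxi-\vy|}h(\vy)\d\vy.
\end{equation*}
Next, applying the inverse temporal Fourier transform and exchanging orders of integration (justified by the Schwartz decay of $h$ and Fubini applied to test functions in $t$),
\begin{equation*}
q(t,\vxi) \;=\; \frac{1}{4\pi}\int_{\R^3}\frac{h(\vy)}{|\vxi-\vy|}\left(\frac{1}{2\pi}\int_{\R}\e^{\i\omega(|\vxi-\vy|-t)}\d\omega\right)\d\vy \;=\; \frac{1}{4\pi}\int_{\R^3}\frac{h(\vy)}{|\vxi-\vy|}\,\delta(|\vxi-\vy|-t)\d\vy.
\end{equation*}
Finally, introducing spherical coordinates $\vy = \vxi + r\eta$ with $r\ge 0$ and $\eta\in S^2$, so that $\d\vy = r^2\,\d r\,\d S(\eta)$, the $\delta(r-t)$ collapses the radial integral and, for $t>0$, produces exactly
\begin{equation*}
q(t,\vxi)\;=\;\frac{1}{4\pi}\int_{S^2} t\,h(\vxi+t\eta)\d S(\eta),
\end{equation*}
which is the claimed identity $(Rh)(t,\vxi)$.

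The main technical point — and the only genuine obstacle — is the rigorous handling of the inner temporal integral $\int_{\R}\e^{\i\omega(|\vxi-\vy|-t)}\d\omega$, which is not absolutely convergent and must be read as $2\pi$ times the Dirac distribution $\delta(|\vxi-\vy|-t)$. To make this clean, I would either (a) test against a Schwartz function $\varphi(t)$, turning the oscillatory integral into the Fourier inversion formula applied to $\varphi$, and then apply Fubini to the jointly Schwartz integrand in $(\omega,\vy,t)$; or (b) regularize by inserting $\e^{-\epsilon\omega^2}$ and pass to the limit $\epsilon\to 0^+$ using dominated convergence against $\varphi(t)$. Either route shows that $q$ coincides, as a tempered distribution on $\R^+\times\R^2$, with the locally integrable function $\frac{t}{4\pi}\int_{S^2} h(\vxi+t\eta)\d S(\eta)$, which is the assertion. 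Note that the hypothesis $\vxi\in\partial\Omega=\R^2\times\{0\}$ is not actually used in the computation — the identity holds for arbitrary $\vxi\in\R^3$ — so the restriction to the plane appears only through the definition of the operator $R$.
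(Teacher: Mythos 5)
Your proposal is correct and follows essentially the same route as the paper's own proof: start from the frequency-domain representation \autoref{freqsol-noatt}, pass to the integrated measurement $q$, recognize the $\omega$-integral as $2\pi\delta(|\vxi-\vy|-t)$, and collapse the radial integral in spherical coordinates $\vy=\vxi+t\eta$. If anything, your bookkeeping is tidier (the paper's displayed computation drops a factor of $2\pi$ and a sign that happen to cancel) and your remark on rigorously interpreting the oscillatory integral by testing against a Schwartz function addresses a point the paper glosses over.
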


%Since the observation is only on one side of the object $\Omega$ and $\vx\in \Omega$, by choosing a suitable coordinates, it is easy to have $\vx\in \R^+\times \R^2$, so $R: S'(\R^+\times \R^2)\to S'(\R^+\times \R^2)$,

%its adjoint is an operator from $S(\R^+\times \R^2)$ to $S(\R^+\times \R^2)$.

\begin{proof}
The solution of the standard wave equation \autoref{eqWaveEquation} is given in \autoref{freqsol-noatt}, by doing the inverse Fourier transform with respect to $\omega$,
\begin{equation*}
p(t, \vxi)= \Ff_1^{-1} \Ff_1 p(\omega,  \vxi)= -\sqrt{2\pi}\int_{\R}\int_{\R^3} \frac{e^{-i\omega t}}{4\pi\sqrt{2\pi}}\frac{\i \omega e^{\i\omega|\vxi-\vy|}}{|\vxi-\vy|}h(\vy)\d \vy\d \omega
\end{equation*}

Therefore, the integrated measurement is written by $q(t, \vxi)=\int_{\R}\int_{\R^3} \frac{e^{-i\omega t}}{4\pi}\frac{e^{\i\omega|\vxi-\vy|}}{|\vxi-\vy|}h(\vy)\d \vy\d \omega$. We consider the integral with respect to $\omega$ first,
\begin{align*}
q(t, \vxi) &= -\frac{1}{4\pi}\int_{\R^3}\frac{1}{|\vxi-\vy|}\int_{\R}e^{i\omega (|\vxi-\vy|-t)}\d\omega h(\vy)\d \vy\\
&= -\frac{1}{4\pi}\int_{\R^3}\frac{\delta(|\vxi-\vy|-t)}{|\vxi-\vy|}h(\vy)\d S(\vy)\\
&= \frac{1}{4\pi t}\int_{|\vxi-\vy|=t}h(\vy)\d \vy.
\end{align*}

Since $\vy$ is on the sphere surface $|\vxi-\vy|=t$, we change the variables $\vy=\vxi+t\eta$ where $\eta$ belongs to the unit sphere $S^2$, with area element $\d S(\vy) = t^2 \d S(\eta)$, and then $Rh$ has the following representation in the time domain,
\begin{align*}
(Rh)(t, \vxi) &= q(t, \vxi)\\
             &= \frac{1}{4\pi}\int_{S^2} t h(\vxi+t\eta)\d S(\eta).
\end{align*}

From this integral representation, we can immediately see that for any $h\in \S(\R^3)$, $(Rh)(t, \vxi)$ is a smooth function bounded above by $t\|h\|_{C^\infty}$, and therefore belongs to $\S'(\R^+\times\R^2)$.

\end{proof}

%and we will elaborate on this after we prove some properties of the operator $R$ and extend its domain.

Sadly, we do not have such elegant representation for the operator $R^a$. By comparing the formulas \autoref{freqsol-att} and \autoref{freqsol-noatt}, the distribution $q^a$ can be obtained from $q$ by, informally speaking, "replacing $\omega$ by $\kappa(\omega)$ in the frequency domain". However, since $\kappa(\omega)\in\C$, we need to define the meaning of having a complex value in the frequency domain, where we use the \emph{Fourier-Laplace transform}, as defined in the appendix, to arrive at the following theorems. We will use the same notation for the Fourier transform and the Fourier-Laplace transform without confusion.

\begin{theorem}\label{Thm-A}
For any $g(t, \xi)\in S'(\R^+\times \R^2)$, $\varphi(t, \xi)\in S(\R^+\times \R^2)$, there exists a continuous operator $A: S'(\R^+\times \R^2)\to S'(\R^+\times \R^2)$ such that
\begin{equation}\label{Operator-A}
\langle \Ff(Ag)(z, \sigma), \Ff \varphi(z, \sigma)\rangle = \langle \Ff g(\kappa(z), \sigma), \Ff \varphi(z, \sigma)\rangle,
\end{equation}
where $z\in \H^+$ and $\sigma$ are the variables corresponding to $t$ and $\vxi$ respectively after the Fourier-Laplace transform.
\end{theorem}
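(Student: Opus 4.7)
The plan is to define $A$ by prescribing its action on the Fourier--Laplace side and then invoking the Paley--Wiener theorem to pull the result back to a tempered distribution on $\R^+\times\R^2$. Concretely, I would set
\[
\Ff_1(Ag)(z,\sigma) := \Ff_1 g(\tilde\kappa(z),\sigma), \qquad z\in\overline{\H^+},\ \sigma\in\R^2,
\]
where $\tilde\kappa$ is the holomorphic continuation of $\kappa$ provided by condition \ref{enAttCoeffHol} of \autoref{deAttCoeff}. The identity \autoref{Operator-A} is then built into the definition; the actual work is to verify that this analytic prescription really does correspond to an honest tempered distribution supported in $\{t\ge 0\}\times\R^2$, and that the resulting map $g\mapsto Ag$ is weakly continuous.

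First I would invoke Paley--Wiener for tempered distributions in the forward direction: since $g\in\S'(\R^+\times\R^2)$ has temporal support in $[0,\infty)$, the partial temporal Fourier transform $\Ff_1 g(z,\sigma)$ extends holomorphically to $z\in\H^+$ with polynomial growth in $|z|$ on every horizontal strip and tempered behaviour in $\sigma$. Because $\tilde\kappa:\overline{\H^+}\to\overline{\H^+}$ is holomorphic on $\H^+$ and polynomially bounded, the composition $F(z,\sigma):=\Ff_1 g(\tilde\kappa(z),\sigma)$ is holomorphic in $z\in\H^+$ and inherits polynomial growth up to the real boundary. Applying the converse direction of Paley--Wiener to $F(\cdot,\sigma)$ then yields a unique $Ag\in\S'(\R^+\times\R^2)$ whose temporal Fourier--Laplace transform is exactly $F$. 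Specialising $z$ to $\R$ and pairing against $\Ff\varphi$ via Parseval in $\sigma$ recovers \autoref{Operator-A}.

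Continuity in the weak topology is then a consequence of the fact that both directions of the Paley--Wiener correspondence are continuous, with $\S'$ carrying its weak topology and the holomorphic side carrying uniform-on-compacts convergence. If $g_n\to g$ weakly in $\S'$, then $\Ff_1 g_n\to\Ff_1 g$ in $\S'$ and (by a standard normal-family argument coming from the Paley--Wiener polynomial bounds) uniformly on compact subsets of $\H^+$; composing with the fixed holomorphic map $\tilde\kappa$ preserves this convergence, so $F_n\to F$ on compacts in $\H^+$, and the converse Paley--Wiener step transports this back to $Ag_n\to Ag$ in the weak topology of $\S'$.

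The hardest part is the quantitative estimate inside the second paragraph: verifying that the composition $F=\Ff_1 g\circ\tilde\kappa$ really satisfies the polynomial-growth hypothesis needed to apply the converse Paley--Wiener theorem. This is exactly where the polynomial bounds \ref{enAttCoeffPolBdd} and \ref{enAttCoeffHol} of \autoref{deAttCoeff} earn their keep: substituting a polynomially-bounded analytic argument into a polynomially-bounded analytic function yields another polynomially-bounded analytic function, and this is precisely what keeps us inside $\S'$ and closes the loop. A secondary subtlety is that $\Im\tilde\kappa(z)$ may vanish even when $z\in\H^+$, so one must arrange the Paley--Wiener estimates on $\Ff_1 g$ to hold uniformly up to $\R$; condition \ref{enAttCoeffHol} giving $\tilde\kappa\in C(\overline{\H^+};\overline{\H^+})$ is exactly what makes this legitimate.
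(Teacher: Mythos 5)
Your proposal follows essentially the same route as the paper: both define $A$ on the Fourier--Laplace side by $\Ff_1 g(\tilde\kappa(z),\sigma)$, use the forward Paley--Wiener (H\"ormander) bound on $\Ff_1 g$ together with the polynomial bound on $\tilde\kappa$ to verify the growth hypothesis, and invoke the converse Paley--Wiener theorem to land back in $\S'(\R^+\times\R^2)$. The only cosmetic difference is in the continuity step, where the paper deduces pointwise convergence of $\Ff g_n(z,\sigma)$ by testing against $\tilde e_z(t)\psi(\vxi)$ and then applies dominated convergence, whereas you argue via uniform-on-compacts convergence and continuity of the Paley--Wiener correspondence; both rest on the same (and equally unquantified) uniform polynomial bounds, so this is not a substantively different argument.
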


\begin{proof}
We will first prove $Ag\in S'(\R^+\times \R^2)$ for any $g\in S'(\R^+\times \R^2)$. According to \autoref{Hormander}, there exist constants $N'\in \N$ and $C>0$, such that for every $\eta$ with $\Im z \geq \eta$ and $z\in \H^+$,
\begin{equation*}
|\Ff g(z, \sigma)| \leq C(1+\sqrt{|z|^2+| \sigma|^2})^{N'}.
\end{equation*}
Therefore, $\Ff g(z, \sigma)$ is holomorphic with respect to $z$ in the region $\H^+\times\R^2$. Since $\kappa(z)$ is holomorphic from \autoref{deAttCoeff}(ii), $\Ff g(\kappa(z), \sigma)$ is also holomorphic with respect to $z$.

Also, using \autoref{deAttCoeff}(ii), $\Ff g(\kappa(z), \sigma)$ could be bounded by,
\begin{equation}\label{estimate-DirectPbm-OperatorA}
\begin{split}
|\Ff g(\kappa(z), \sigma)|&\leq C(1+\sqrt{|\kappa(z)|^2+| \sigma|^2})^{N'}\\
&\leq C(1+\sqrt{|(1+|z|)^N|^2+|\sigma|^2})^{N'}\\
&\leq C(1+\sqrt{|z|^2+|\sigma|^2})^{NN'}
\end{split}
\end{equation}
for all $z\in\H^+$.

Therefore, applying \autoref{Hormander} on \autoref{estimate-DirectPbm-OperatorA} implies that $\Ff g(\kappa(z), \sigma)$ is the Fourier transform of a tempered distribution supported on $\R^+\times \R^2$, so the operator $A$ is well-defined.

To prove the continuity of $A$, let $g_n\to g$ in $S'(\R^+\times \R^2)$, which means that $\lim_{n\rightarrow \infty} \langle g_n,\phi\rangle=\langle g,\phi\rangle$ for all $\phi\in\mathcal{S}(\R^+\times \R^2)$. For any $\psi\in \S(\R^2)$, in the inner product$\langle g,\phi\rangle$ by taking $\phi(t, \vxi)=\tilde{e}_z(t)\psi(\vxi)$, where $\tilde{e_{z}}$ is defined in the Appendix, we have $\Ff g_n (z, \sigma)= \langle g_n(t, \vxi), \tilde{e}_z(t)\psi(\vxi)\rangle$. Therefore, $\Ff g_n(z, \sigma)$ converges pointwise to $\Ff g(z, \sigma)$ in $\H^+\times\R^2$. For every $\phi\in\mathcal{S}(\R^+\times \R^2)$, we use the dominated convergence theorem to get
\begin{equation*}
\begin{split}
\lim_{n\to+\infty}\langle Ag_n,\phi\rangle &= \lim_{n\to+\infty}\int_{\R^+\times\R^2}\Ff g_n(\kappa(z), \sigma)\overline{\Ff\phi(z, \sigma)}\d z\d\sigma\\
&= \int_{\R^+\times\R^2}\left(\lim_{n\to+\infty}\Ff g_n(\kappa(z), \sigma)\overline{\Ff\phi(z, \sigma)}\right)\d z\d\sigma\\
&=\int_{\R^+\times\R^2}\Ff g(\kappa(z), \sigma)\overline{\Ff\phi(z, \sigma)}\d z\d\sigma\\
&=\langle Ag,\phi\rangle.
\end{split}
\end{equation*}
This shows that $Ag_n\to Ag$ whenever $g_n\to g$, so $A$ is continuous.

Using the operator $A$, the relationship between the operators $R$ and $R^a$ will be given in the following theorem.

\begin{theorem}\label{relation-Ra-R-A}
For $q(t, \vxi)$ and $q^a(t, \vxi)$ defined in \autoref{Integrated-p} and \autoref{IntegratedMeasurement} respectively, operator $A$ defined in \autoref{Operator-A} satisfies
\begin{equation*}
Aq(t, \vxi) = q^a(t, \vxi).
\end{equation*}
Furthermore, for all $h(\vx)\in\mathcal{S}(\R^3)$,
\begin{equation}
R^ah(\vx)=ARh(\vx).
\end{equation}
\end{theorem}

\begin{proof}

Combining the definition of $q(t, \vxi)$ in \autoref{Integrated-p} and the representation of $\Ff_1 p(\omega, \vxi)$ in \autoref{freqsol-noatt} yields
\begin{equation}\label{Fourier-q-R}
\Ff_1q(\omega, \vxi)=-\int_{\R^3} \frac{1}{4\pi\sqrt{2\pi}}\frac{\e^{\i\omega|\vxi-\vy|}}{|\vxi-\vy|}h(\vy)\d \vy
\end{equation}
for all $\omega\in\R$.

We will prove that \autoref{Fourier-q-R} also holds when $\omega$ is replaced by $z\in \H^+$, by showing that both sides are holomorphic functions with respect to $z$.

On one hand, since $\Ff_1 q(z, \vxi)$ is the Fourier-Laplace transform of $q$, \autoref{Hormander} shows that $\Ff_1q(z, \vxi)$ is holomorphic with respect to $z$. On the other hand, to prove the right hand of \autoref{Fourier-q-R} is also holomorphic with respect to $z$, we verify the Cauchy-Riemann equations.

\begin{equation*}
\begin{split}
&\frac{\partial}{\partial \bar{z}}\left(-\int_{\R^3} \frac{1}{4\pi\sqrt{2\pi}}\frac{\e^{\i z|\vxi-\vy|}}{|\vxi-\vy|}h(\vy)\d \vy\right)\\
&=-\int_{\R^3} \frac{1}{4\pi\sqrt{2\pi}}\frac{\partial \e^{\i z|\vxi-\vy|}}{\partial \bar{z}}\frac{1}{|\vxi-\vy|}h(\vy)\d \vy\\
&=0.
\end{split}
\end{equation*}
where the last equality holds by using the fact that $\e^{\i z|\vxi-\vy|}$ is holomorphic with respect to $z$, so $\frac{\partial \e^{\i z|\vxi-\vy|}}{\partial \bar{z}}=0$.

Since the both sides of \autoref{Fourier-q-R} agree on the real axis $\R$, by the identity theorem for holomorphic functions, they must agree everywhere in $\overline{\H^+}$. Based on this, we can substitute $\kappa(z)$ for $z$ in \autoref{Fourier-q-R}. Using the definition of the operator $A$, for any $\varphi(t, \xi)\in S(\R^+\times \R^2)$ the left hand becomes
\begin{align*}
\langle (\Ff_1 q)(\kappa(z), \vxi), (\Ff_1 \varphi)(z, \vxi)\rangle&= \langle(\Ff q)(\kappa(z), \sigma), (\Ff \varphi)(z, \sigma)\rangle\\
&= \langle(\Ff Aq)(z, \sigma), (\Ff \varphi)(z, \sigma)\rangle\\
&= \langle(\Ff_1 Aq)(z, \vxi), (\Ff_1 \varphi)(z, \vxi)\rangle.
\end{align*}

The right hand side becomes
\begin{equation*}
-\int_{\R^3} \frac{1}{4\pi\sqrt{2\pi}}\frac{\e^{\i\kappa(z)|\vxi-\vy|}}{|\vxi-\vy|}h(\vy)\d \vy= \Ff_1 q^a(z, \vxi).
\end{equation*}

This shows $Aq=q^a$ since Fourier transform is a bijection. Consequently, for any $h(\vx)\in\mathcal{S}(\R^2\times\R^+)$,
\begin{equation*}
ARh=Aq=q^a=R^ah,
\end{equation*}
which impiles $R^a=AR$.

\end{proof}

\subsection{Inverse problem}

To invert the operator $R^a$, \autoref{relation-Ra-R-A} shows it is necessary to know how to invert operators $R$ and $A$ . It turns out both inversions can be done in frequency space, and the inverse operators can be composed naturally. We will first discuss how to invert the operator $R$ using its adjoint, and then directly construct the inverse operator of $A$.

\subsubsection{The adjoint of the operator $R$}

In this part, we will prove that $R$ is a continuous operator by constructing its adjoint operator $R^*$, and extend its domain to the space $\mathcal{S}'(\R^+\times \R^2)$. This allows us to invert $R$ via the inverse of its adjoint.

Define
\begin{equation}\label{Space_Se}
\S_{e}(\R \times \R^2)=\{g\in \S(\R \times \R^2): g(\omega, \sigma)=g(-\omega, \sigma)\;\; \forall\; \omega\in\R, \;\sigma\in\R^2\}.
\end{equation}

\begin{theorem}\label{Thm-R*-representation}
There exists a continuous linear operator $R^*:\mathcal{S}(\R^+\times \R^2)\to \mathcal{S}_e(\R^3)$ such that
$\langle Rh, \varphi\rangle=\langle h, R^*\varphi\rangle$ for all $h(\vx)\in\mathcal{S}(\R^2\times \R^+)$ and $\varphi(t, \vxi)\in\mathcal{S}(\R^+\times \R^2)$. Furthermore, the operator $R^a$ in the time domain is given by
\begin{equation}\label{Rstar_time}
(R^*\varphi)(\vx)= \frac{1}{4\pi}\int_{\R^2}\varphi(|\vx-\vxi|, \vxi)|\vx-\vxi|^{-1}\d \vxi,
\end{equation}
where $\vxi\in \partial\Omega$ and $\vx\in\R^3$. In frequency domain $R^*$ is given by
\begin{equation}\label{Rstar-frequency}
(\Ff R^*\varphi)(\sigma, \varrho)= \frac{1}{8\pi\i}\frac{(\Ff \varphi(\sqrt{|\sigma|^2+\varrho^2}, \sigma)- \Ff \varphi(-\sqrt{|\sigma|^2+\varrho^2}, \sigma))}{\sqrt{|\sigma|^2+\varrho^2}},
\end{equation}
where $\sigma$ and $\varrho$ are the variables corresponding to $\vx_{12}$ and $\vx_3$ after the Fourier transform.
\end{theorem}

\begin{proof}
For any $\varphi(t, \vxi)\in S(\R^+, \R^2)$,
\begin{equation*}
\langle Rh, \varphi\rangle = \int_{\R^+\times\R^2}(Rh)(t, \vxi)\overline{\varphi(t, \vxi)}\d \vxi \d t.
\end{equation*}
Plugging \autoref{R-represent} into it yields $\langle Rh, \varphi\rangle = \frac{1}{4\pi\sqrt{2\pi}}\int_{\R^+\times\R^2}\int_{S^2} t h(\vxi+t\eta)\overline{\varphi(t, \vxi)}\d S(\eta)\d \vxi \d t$. We make a substitution $\vx=t\eta$ for $t\in \R$ and $\eta\in \R^3$ so that $\d\vx=t^2 \d t\d S(\eta)$. Plugging this relation to the integral and then do a translation, the inner product becomes
\begin{align*}
\langle Rh, \varphi\rangle &= \frac{1}{4\pi}\int_{\R^3}\int_{\R^2}  |\vx|^{-1} h(\vxi+\vx)\overline{\varphi(|\vx|, \vxi)}\d \vxi \d \vx\\
&= \frac{1}{4\pi}\int_{\R^3}\int_{\R^2} h(\vx)\overline{\varphi(|\vx-\vxi|, \vxi)}|\vx-\vxi|^{-1}\d \vxi \d \vx.
\end{align*}

Therefore, the adjoint operator $R^*$ is given by
\begin{equation*}
(R^*\varphi)(\vx)= \frac{1}{4\pi}\int_{\R^+\times\R^2}\int_{\R^2}\varphi(|\vx-\vxi|, \vxi)|\vx-\vxi|^{-1}\d \vxi
\end{equation*}

so that $\langle Rh, \varphi\rangle=\langle h, R^*\varphi\rangle$ always holds.

The computation is much faster if the formula is in the frequency domain as mentioned \cite{HalSchZan09b}, the representation of $R^*$ in the frequency domain is needed.

Applying Fourier transform with respect to $\vx=(\vx_{12}, \vx_3)$ in \autoref{Rstar_time}, and let $\sigma\in \R^2$ and $\varrho\in \R$ be the variables after the Fourier transform corresponding to $\vx_{12}$ and $\vx_3$ respectively,

\begin{equation*}
(\Ff {R^*\varphi})(\sigma, \varrho) = \frac{1}{4\pi(2\pi)^{3/2}}\int_{\R^3} e^{\i \langle \sigma, \vx_{12} \rangle+\i \varrho\vx_3}\int_{\R^2}\frac{\varphi(\sqrt{|\vx_{12}-\vxi|^2+\vx_3^2}, \vxi)}{\sqrt{|\vx_{12}-\vxi|^2+\vx_3^2}} \d\vxi\d\vx_{12}\d\vx_3.
\end{equation*}

By doing a translation $\vx_{12}\to\vx_{12}+\vxi$, and first consider the integral with respect to $\vxi$,
\begin{align*}
(\Ff{R^*\varphi})(\sigma, \varrho)&= \frac{1}{4\pi(2\pi)^{3/2}}\int_{\R^3\times\R^2} e^{\i \langle \sigma, \mathbf{\vx_{12}} \rangle}e^{\i \langle \sigma, \vxi \rangle}e^{\i \varrho\vx_3}\frac{\varphi(|\vx|, \vxi)}{|\vx|} \d\vx \d\vxi\\
&= \frac{1}{4\pi\sqrt{2\pi}}\int_{\R^3} e^{\i \langle \sigma, \mathbf{\vx_{12}} \rangle + \i \varrho\vx_3}\frac{\Ff_2 \varphi(|\vx|, \sigma)}{|\vx|} \d\vx\\
&= \frac{1}{4\pi\sqrt{2\pi}}\int_{\R^3} e^{\i \langle (\sigma, \varrho), \vx \rangle} \frac{\Ff_2 \varphi(|\vx|, \sigma)}{|\vx|} \d\vx,
\end{align*}
which could be seen as Fourier transform with respect to $\vx$. By taking $|\vx|=\nu$, $\frac{\Ff_2 \varphi(\nu, \sigma)}{\nu}$ is a radially symmetric function with respect to $\nu$. Inserting the relation between Hankel transform and Fourier transform \autoref{Hankel-Fourier} into above representation of $(\Ff{R^*\varphi})(\sigma, \varrho)$ yields
\begin{equation}\label{R*-varphi}
\Ff (R^*\varphi)(\sigma, \varrho)=\frac{1}{4\pi\sqrt{2\pi}|\langle \sigma, \varrho \rangle|^{1/2}}\int_{\R^+} \nu^{3/2} J_{1/2}(\nu |\langle \sigma, \varrho \rangle|)\frac{\Ff_2 \varphi(\nu, \sigma)}{\nu} \d \nu.
\end{equation}

Bessel function $J_{1/2}$ has elementary representation, which is $J_{1/2}(z)=\frac{\sin z}{\sqrt{z}}$. Inserting it into \autoref{R*-varphi} yields
\begin{align*}
(\Ff (R^*\varphi))(\sigma, \varrho)&= \frac{1}{4\pi\sqrt{2\pi}|\langle \sigma, \varrho \rangle|^{1/2}}\int_{\R^+} \nu^{3/2} \frac{\sin(\nu |(\sigma, \varrho)|)}{\nu^{1/2}|(\sigma, \varrho)^{1/2}|}\frac{\Ff_2 \varphi(\nu, \sigma)}{\nu} \d \nu\\
&= \frac{1}{4\pi\sqrt{2\pi}|\langle \sigma, \varrho \rangle|}\int_{\R^+} \sin(\nu|(\sigma, \varrho)|)\Ff_2 \varphi(\nu, \sigma) \d \nu
\end{align*}

Since $|\langle \sigma, \varrho \rangle|= \sqrt{|\sigma|^2+\varrho^2}$, and using Euler's formula for $\sin$ function,
\begin{equation*}
(\Ff (R^*\varphi))(\sigma, \varrho)= \frac{1}{8\pi i\sqrt{2\pi}|\langle \sigma, \varrho \rangle|}(\int_{\R^+}e^{\i \nu \sqrt{|\sigma|^2+\varrho^2}}\Ff_2 \varphi(\nu, \sigma) \d \nu- \int_{\R^+}e^{-\i \nu \sqrt{|\sigma|^2+\varrho^2}}\Ff_2 \varphi(\nu, \sigma) \d \nu).
\end{equation*}

Using the definition of Fourier transform, in the frequency domain $R^*$ is written by,

\begin{equation}
\Ff (R^*\varphi)(\sigma, \varrho)= \frac{1}{8\pi i}\frac{(\Ff \varphi(\sqrt{|\sigma|^2+\varrho^2}, \sigma)- \Ff\varphi(-\sqrt{|\sigma|^2+\varrho^2}, \sigma))}{\sqrt{|\sigma|^2+\varrho^2}}.
\end{equation}

Since \autoref{R} shows $R: \S\to \S'$, naturally, $R^*$ is an operator mapping $\S$ to $\S'$. We will now prove that the range of $R^*$ actually lies in $\S$. For any $\varphi\in \S$, we will prove $R^*\varphi\in S$ by showing all its semi-norms $\|\Ff(R^*\varphi)\|_{\alpha,\beta}$ are finite.

First of all, to get rid of the denominator $\sqrt{|\sigma|^2+\varrho^2}$, we write
\begin{equation*}
\Ff(R^*\varphi)=\int_{-1}^{1}D^{(1,0)}\left(\Ff\varphi(\zeta\sqrt{|\sigma|^2+\varrho^2},\sigma)\right)\d\zeta,
\end{equation*}
where $D^{(1,0)}$ denoted the defferential operator with respect to the first variable with the order $1$.

Using the chain rule repeatedly, by induction there exists a family of polynomials $P_{\alpha,\alpha'}$ for all multi-indices $\alpha,\alpha'$ with $|\alpha'|\leq|\alpha|$, such that the derivative with order $\alpha$
\begin{equation*}
D^\alpha_{\sigma,\varrho}\left(\Ff\varphi(\zeta\sqrt{|\sigma|^2+\varrho^2},\sigma)\right)=
\sum_{|\alpha'|\leq|\alpha|}P_{\alpha,\alpha'}\left(\frac{\zeta\varrho}{\sqrt{|\sigma|^2+\varrho^2}},\frac{\sigma}{\sqrt{|\sigma|^2+\varrho^2}}\right)D^{\alpha'}\left(\Ff\varphi(\zeta\sqrt{|\sigma|^2+\varrho^2},\sigma)\right),
\end{equation*}
where $D^\alpha_{\sigma, \varrho}$ denotes the $\alpha$-th derivative with respect to $\sigma$ and $\varrho$. Therefore, the semi-norms $\|\Ff(R^*\varphi)\|_{\alpha,\beta}$ can be controlled by the semi-norms $\|\Ff\varphi\|_{\alpha',\beta}$ for all $|\alpha'|\leq|\alpha|$. Consequently, $R^*\varphi$ does indeed belong to $\mathcal{S}(\R^+\times\R^2)$.

The above argument also shows that the operator $R^*$ is continuous, since when $\varphi\to 0$ in $\mathcal{S}(\R^+\times\R^2)$, all its semi-norms tends to zero, which means that $\R^*\varphi$ also tends to zero.
\end{proof}

\begin{lemma}
The mapping $R: \S\to \S'$ could be extended to a continuous linear operator $R: \S'\to \S'$.
\end{lemma}
\begin{proof}

\autoref{Thm-R*-representation} shows that $R^*$ is an operator which maps $\S$ to $\S$, we now extend operator $R$ such that $Rh\in\S'(\R^+\times\R^2)$ for all $h\in\S'(\R^+\times\R^2)$ by
\begin{equation*}
\langle Rh,\varphi\rangle=\langle h,R^*\varphi\rangle \text{ for all } \varphi\in\mathcal{S}(\R^+\times\R^2).
\end{equation*}
For any $h\in \S'$, $Rh$ is a well-defined linear functional, and satisfies $Rh(\varphi)=(h\circ R^*) (\varphi)$. Since $h$ is a continuous linear functional and $R^*$ is a continuous operator, the linear functional $Rh$ is also continuous. Therefore, $Rh$ belongs to $\mathcal{S}'(\R^+\times\R^2)$.

The continuity of the extended operator $R$ can be proved by the following. For any sequence $\{h_n\}_n$ in $\S'(\R^+\times\R^2)$ satisfying $h_n\to h$, which means that $\lim_{n\to \infty}\langle h_n, \varphi\rangle =\langle h, \varphi\rangle$ holds for any $\varphi \in \S(\R^+\times\R^2)$. Since $R^*\varphi\in \S$ from \autoref{Thm-R*-representation},
\begin{equation*}
\begin{split}
\lim_{n\to \infty}\langle Rh_n,\varphi\rangle&= \lim_{n\to \infty}\langle h_n, R^*\varphi\rangle\\
= \langle h,R^*\varphi\rangle\\
= \langle Rh,\varphi\rangle.
\end{split}
\end{equation*}

Therefore, the limit of $Rh_n$ is $Rh$, which means $R$ is continuous.

\end{proof}

\subsubsection{Inverting the operator $R$}
In this part we will give the inverse operator of $R$ by constructing a continuous operator that serves as an inverse of $R^*$. To construct this operator, in \autoref{E2thm} we will use a theorem in \cite{See64} which is similar to Whitney's extension theorem.

\begin{lemma}\label{Ethm}
There exists a continuous linear mapping
\begin{equation}
E: \mathcal{S}_e(\R^3) \to \S_e(\R^3)
\end{equation}
such that for all $\psi\in \S_e(\R^3)$ where $\S_e(\R^3)$ is defined in \autoref{Space_Se}, in the frequency domain $E\psi$ satisfies
\begin{equation}\label{E-operator}
(\Ff(E\psi))(\omega, \sigma)= 8\pi\i\omega (\Ff\psi)(\sgn (\omega)\sqrt{|\omega|^2-|\sigma|^2}, \sigma),  \text{ for } |\omega|\geq |\sigma|.
\end{equation}
\end{lemma}

%This lemma will be proved later.
\begin{proof}

We will construct continuous linear operators $E_1$ and $E_2$ such that $E=\Ff^{-1}\circ E_1\circ E_2\circ \Ff$. Since the Fourier transform $\Ff$ is continuous, $E$ would also be continuous.

\begin{lemma}
There exists a continuous operator $E_1:\mathcal{S}(\R^3)\to \mathcal{S}_e(\R^3)$ such that
\begin{equation}
E_1\psi(\omega, \sigma)=8\pi\i\psi(\omega^2-\sigma^2, \sigma),  \text{ for all } \omega\in\R.
\end{equation}
holds for all $\psi\in \mathcal{S}(\R^3)$.
\end{lemma}
\begin{proof}
Since the partial derivatives of $E_1\psi(\omega, \sigma)=\psi(\omega^2-\sigma^2, \sigma)$ are combinations of polynomials of $\omega$, $\sigma$ as well as the derivatives of $\psi(\omega, \sigma)$, it follows that $E_1\psi$ is a Schwartz function, and all its semi-norms $\|E_1\psi\|_{\alpha, \beta}$ can be controlled by the semi-norms of $\psi$. This means the operator $E_1$ is continuous.
\end{proof}

\begin{lemma}\label{E2thm}
There exists a continuous operator $E_2:\mathcal{S}_e(\R^3)\to \mathcal{S}(\R^3)$ such that, for all $\psi\in\mathcal{S}_e(\R^3)$,
\begin{equation}
E_2\psi(\omega, \sigma)=\omega\psi(\sqrt{\omega}, \sigma)
\end{equation}
holds for all $\omega\geq 0$.
\end{lemma}
\begin{proof}
Define $$\chi(\omega, \sigma)=\psi(\sqrt{\omega}, \sigma)$$ for all $\omega\geq 0$. Since $\psi\in\mathcal{S}_e(\R^3)$, it is obvious that $\chi$ is smooth when $\omega>0$ and rapidly decreasing. We will first prove that $\frac{\partial^n}{\partial\omega^n}\chi(\omega, \sigma)$ are smooth functions when $\omega=0$. (directly using the chain rule could not help since the derivatives of $\sqrt{\omega}$ are not bounded when $\omega$ is small.)
		
Since $\psi$ is a smooth, by doing a Taylor expansion for $\psi(\omega, \sigma)$ at $\omega=0$, and noticing that $\psi$ is even with respect to $\omega$, so $\psi$ could be written by
\begin{equation}
\psi(\omega, \sigma)=\sum_{j=0}^{k}\psi_{2j}(\sigma)\frac{\omega^{2j}}{(2j)!}+o(|\omega|^{2k}),
\end{equation}
where $\psi_{2j}(\sigma)$ is smooth functions on $\R^2$. This implies the function $\chi$ also has the Taylor expansion when $\omega\to 0^+$,
\begin{equation}
\chi(\omega, \sigma)=\sum_{j=0}^{n}\psi_{2j}(\sigma)\frac{\omega^{j}}{(2j)!}+o(|\omega|^{n}).
\end{equation}
for all $\omega\geq 0$. This shows that $\frac{\partial^j}{\partial\omega^j}\chi(0,\sigma)=\frac{j!}{(2j)!}\psi_{2j}(\sigma)$ is indeed a smooth function.

The next step is to do the extension of $\chi$ to $\omega<0$. Following Seelay in \cite{See64}, we choose a sequence of real numbers $\{a_m\}_{m\geq 1}$ such that $\sum_{m\geq 1}a_m2^{m j}=(-1)^j$, and a bump function $\theta(\omega)\in C^\infty_0(\R)$ such that $\theta(\omega)=1$ when $|\omega|<1$. Define
\begin{equation}
\tilde{\chi}(\omega, \sigma)=\left\{ \begin{array}{lr}
\chi(\omega, \sigma), & \omega\geq0\\
\sum_{m=1}^{\infty}a_m\chi(-2^m\omega, \sigma)\theta(-2^m\omega), & \omega<0.
\end{array}\right.
\end{equation}
		
It is obvious that $\tilde{\chi}(\omega, \sigma)$ is a smooth function when $\omega<0$ and rapid-decreasing since the construction of $\theta$ deduces $\tilde{\chi}=0$ when $\omega\rightarrow -\infty$. what remains is to prove the smoothness of $\tilde{\chi}(\omega, \sigma)$ at $\omega=0$. This will be done by computing its Taylor expansion when $\omega\to0^-$.
		
The definition of $\theta(\omega)$ deduces that $\theta(-2^m\omega)=1+o(|\omega|^n)$ for all $n\in Z^+$, and using the definition of sequence $a_m$ yields
\begin{equation}
\begin{split}
\tilde{\chi}(\omega, \sigma)&=\sum_{m=1}^{\infty}a_m\chi(-2^m\omega, \sigma)\theta(-2^m\omega)\\
&=\sum_{m=1}^{\infty}a_m\sum_{j=0}^{n}\psi_{2j}(\sigma)\frac{(-2^m\omega)^{j}}{(2j)!}+o(|\omega|^{n})\\
&=\sum_{j=0}^{n}\left(\sum_{m=1}^{\infty}(-1)^ja_m2^{mj}\right)\psi_{2j}(\sigma)\frac{\omega^{j}}{(2j)!}+o(|\omega|^{n})\\
&=\sum_{j=0}^{n}\psi_{2j}(\sigma)\frac{\omega^{j}}{(2j)!}+o(|\omega|^{n}),
\end{split}
\end{equation}
which agrees with the Taylor expansion when $\omega\to 0^+$. This shows that $\tilde{\chi}$ is smooth at $\omega=0$, and therefore belongs to $\mathcal{S}(\R\times\R^2)$.

We now proceed to estimate the semi-norms of $\tilde{\chi}$ to prove that the operator mapping $\psi$ to $\tilde{\chi}$ is continuous. This will be broken up into two parts: first we estimate the derivatives of $\chi$ by the derivatives of $\psi$, then we control the derivatives of $\tilde{\chi}$ by the derivatives of $\chi$.
		
For the first part, a representation for the derivatives of $\chi$ is needed. The following integral representation  can be proved by induction,
\begin{equation}
\frac{\partial^j}{\partial\omega^j}\chi(\omega,\sigma)=\frac{(-1)^j}{2^{2j-1}(j-1)!}\int_{0}^{1}(1-\zeta^2)^{j-1}\left((\frac{\partial^{2j}}{\partial\omega^{2j}}\psi)(\zeta\sqrt{\omega},\sigma)\right)\d\zeta
\end{equation}
holds for all $\omega>0$ and $j\geq 1$.
		
Therefore, the semi-norms of $\chi$ could be bounded by,
\begin{equation}
\begin{split}
&\sup_{\R^+\times\R^2}|\omega^{\alpha_1}\sigma^{\alpha_2}D_\omega^{\beta_1}D_\sigma^{\beta_2}\chi(\omega, \sigma)|\\
&\leq \sup_{\R^+\times\R^2}C\left|\int^1_0(1-\zeta^2)^{\beta_1-1}\left(\omega^{\alpha_1}\sigma^{\alpha_2}(\frac{\partial^{\beta_2}}{\partial\sigma^{\beta_2}}\frac{\partial^{2\beta_1}}{\partial\omega^{2\beta_1}}\psi)(\zeta\sqrt{\omega},\sigma)\right)\d\zeta\right|\\
&\leq C\left(\int^1_0(1-\zeta^2)^{\beta_1-1}\d\zeta\right)\|\psi\|_{(2\alpha_1,\alpha_2),(2\beta_1,\beta_2)}\\
&\leq C\|\psi\|_{(2\alpha_1,\alpha_2),(2\beta_1,\beta_2)}\\
\end{split}
\end{equation}
where $C$ is a constant that only depends on $\beta_1$.
		
This means that the semi-norms of $\chi$ can be controlled by the corresponding semi-norms of $\psi$.
		
Again using the arguments from \cite{See64}, the semi-norms of $\tilde{\chi}$ can be controlled by the semi-norms of $\chi$. This finally shows that the operator $E_2$ is continuous.
		
\end{proof}
	
It follows that
\begin{equation}
E=\Ff^{-1}\circ E_1\circ E_2\circ \Ff
\end{equation}
is a continuous operator.
	
\end{proof}

The following two lemmas shows that $E$ can be thought as an inverse of $R^*$.

Define
\begin{equation}\label{S'cone}
S'_{\text{cone}}(\R^+\times \R^2)=\{g\in S'(\R^+\times \R^2): \Ff g(\omega, \sigma)=0 \text{ for } |\omega|< |\sigma|\}.
\end{equation}

\begin{lemma}\label{Relation-ER}
For the operator $E$ satisfying \autoref{Ethm} and the operator $R^*$ satisfying \autoref{Thm-R*-representation}, the following relation holds,
\begin{equation}
\langle g, ER^*\varphi\rangle=\langle g,\varphi\rangle
\end{equation}
for all $g\in S'_{\text{cone}}(\R^+\times \R^2)$ and $\varphi\in\mathcal{S}(\R^+\times \R^2)$.
\end{lemma}

\begin{proof}
Let $g\in S'_{\text{cone}}$, by the Parseval equality,
\begin{equation}\label{ER-Identitythem1-step1}
\begin{split}
\langle g,ER^*\varphi\rangle&=\langle\Ff g,\Ff(E R^*\varphi)\rangle\\
&=\int_{\R\times\R^2}\Ff g(\omega, \sigma)\overline{\Ff(E R^*\varphi)(\omega, \sigma)}\d\omega\d\sigma.
\end{split}
\end{equation}
	
By the definition of the space $S'_{\text{cone}}(\R^+\times \R^2)$, the distribution $\Ff g$ is supported in the set $\{(\omega,\sigma)\in\R\times\R^2\mid|\omega|\geq|\sigma|\}$. Therefore, we could restrict the integral $\R\times\R^2$ to $|\omega|\geq|\sigma|$, and then plug \autoref{E-operator} into \autoref{ER-Identitythem1-step1} yields,
\begin{align*}
\int_{\R\times\R^2}\Ff g(\omega, \sigma)\overline{\Ff(E R^*\varphi)(\omega, \sigma)} &=\int_{|\omega|\geq|\sigma|}\Ff g(\omega, \sigma)\overline{\Ff(E R^*\varphi)(\omega, \sigma)}\d\omega\d\sigma\\
&=8\pi\i\int_{|\omega|\geq|\sigma|}\Ff g(\omega, \sigma)\omega\overline{\Ff(R^*\varphi)(\sgn (\omega)\sqrt{|\omega|^2-|\sigma|^2}, \sigma)}\d\omega\d\sigma
\end{align*}
	
Then inserting the expression \autoref{Rstar-frequency} into above, we have
\begin{align*}
&\phantom{=}8\pi\i\int_{|\omega|\geq|\sigma|}\Ff g(\omega, \sigma)\omega\overline{\Ff(R^*\varphi)(\sgn (\omega)\sqrt{|\omega|^2-|\sigma|^2}, \sigma)}\d\omega\d\sigma\\
&=\int_{|\omega|\geq|\sigma|}\Ff g(\omega, \sigma)\sgn\omega\overline{\Ff(\varphi)(|\omega|, \sigma)-\Ff(\varphi)(-|\omega|, \sigma)}\d\omega\d\sigma.
\end{align*}
	
To remove $\sgn \omega$, we divide the integral with respect to $\omega$ on $\R$ into two parts, $\omega\geq|\sigma|$ and $\omega\leq-|\sigma|$, and then
	
\begin{equation*}
\int_{|\omega|\geq|\sigma|}\Ff g(\omega, \sigma)\sgn\omega\overline{\Ff(\varphi)(|\omega|, \sigma)-\Ff(\varphi)(-|\omega|, \sigma)}\d\omega\d\sigma= P + Q,
\end{equation*}
where
\begin{equation*}
P=\int_{\omega\geq|\sigma|}\Ff g(\omega, \sigma)\overline{\Ff(\varphi)(\omega, \sigma)-\Ff(\varphi)(-\omega; \sigma)}\d\omega\d\sigma,
\end{equation*}
and
\begin{equation*}
\begin{split}
Q&=-\int_{\omega\leq-|\sigma|}\Ff g(\omega, \sigma)\overline{\Ff(\varphi)(-\omega, \sigma)-\Ff(\varphi)(\omega, \sigma)}\d\omega\d\sigma\\
&=\int_{\omega\leq-|\sigma|}\Ff g(\omega, \sigma)\overline{\Ff(\varphi)(\omega, \sigma)-\Ff(\varphi)(-\omega, \sigma)}\d\omega\d\sigma,
\end{split}
\end{equation*}

which can be recombined to become

\begin{align*}
P+Q &=\int_{|\omega|\geq|\sigma|}(\Ff g)(\omega, \sigma)\overline{(\Ff\varphi)(\omega, \sigma)-(\Ff\varphi)(-\omega, \sigma)}\d\omega\d\sigma.
\end{align*}
By a change of variable $\omega\mapsto-\omega$, we have $$\int_{|\omega|\geq|\sigma|}(\Ff g)(\omega, \sigma)\overline{(\Ff\varphi)(-\omega, \sigma)}\d\omega\d\sigma=\int_{|\omega|\geq|\sigma|}(\Ff g)(-\omega, \sigma)\overline{(\Ff\varphi)(\omega, \sigma)}\d\omega\d\sigma,$$ and therefore
\begin{equation*}
P+Q =\int_{|\omega|\geq|\sigma|}((\Ff g)(\omega, \sigma)-(\Ff g)(-\omega, \sigma))\overline{(\Ff\varphi)(\omega, \sigma)}\d\omega\d\sigma.
\end{equation*}

Since Fourier transform has the property that $\Ff (g(-t, \sigma))=(\Ff g)(-\omega, \sigma)$, combining this property with Parseval identity,
\begin{align*}
\int_{\R\times\R^2}(\Ff g(\omega, \sigma)-\Ff g(-\omega,\sigma))\overline{\Ff(\varphi)(\omega, \sigma)}\d\omega\d\sigma  &=\int_{\R\times\R^2}(g(t, \vxi)-g(-t, \xi))\overline{\varphi(t, \vxi)}\d t\d\vxi\\
&=\int_{\R^+\times\R^2}g(t, \vxi)\overline{\varphi(t, \vxi)}\d t\d\vxi\\
&=\langle g,\varphi\rangle,
\end{align*}
which finishes the proof.
	
\end{proof}

\begin{theorem}\label{Thm:RstarE}
For every $\varphi\in\S_e(\R\times\R^2)$, we have
\begin{equation}
R^*E\varphi=\varphi.
\end{equation}
\end{theorem}
\begin{proof}
Using the definitions of the operators $R^*$ and $E$,
\begin{equation*}
\begin{split}
(\Ff R^*E\varphi)(\omega, \sigma)&=\frac{1}{8\pi i\sqrt{\omega^2+\sigma^2}}\left((\Ff E\varphi)(\sqrt{\omega^2+\sigma^2}, \sigma)-(\Ff E\varphi)(-\sqrt{\omega^2+\sigma^2}, \sigma)\right)\\
&=\frac12\left((\Ff\varphi)(\omega, \sigma)+(\Ff\varphi)(-\omega, \sigma)\right)\\
&=(\Ff\varphi)(\omega, \sigma)
\end{split}
\end{equation*}
where the last equality is because of the evenness of $\varphi$.
\end{proof}

\end{proof}

\subsubsection{The reconstruction formula}

\begin{theorem}\label{Thm-reconstructionR}
Let $U\subset\R^2\times\R$ be an open region, and $U'\subset\R\times\R^2$ be the image of $U$ under the map $(\sigma,\varrho)\mapsto(\sgn\varrho\sqrt{\varrho^2+|\sigma|^2},\sigma)$. If for any $q(t, \vxi)$, the Fourier transform $(\Ff q)(\omega,\sigma)$ is regular on $U'$, then $(\Ff h)(\sigma,\varrho)$ is regular on $U$, and the following equality holds for all $(\sigma,\varrho)\in U$,
\begin{equation*}
\Ff h(\sigma,\varrho)=8\pi\i\varrho(\Ff q)(\sgn\varrho\sqrt{\varrho^2+|\sigma|^2}, \sigma).
\end{equation*}
\end{theorem}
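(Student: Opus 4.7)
The plan is to derive the reconstruction formula by exploiting the left-inverse identity $R^*E=\mathrm{id}$ on $\S_e(\R^3)$ from \autoref{Thm:RstarE}; dually, this identifies $E^*$ as a left inverse of $R$, so that $h=E^*q$ whenever $q=Rh$, and the theorem's formula should emerge as the Fourier-domain expression of this inverse.

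Before computing, I would verify the preliminary support condition $q\in\S'_{\mathrm{cone}}(\R^+\times\R^2)$ --- i.e., $\Ff q$ is supported in the cone $\{(\omega,\sigma):|\omega|\geq|\sigma|\}$. This is forced by \autoref{Rstar-frequency}, which shows $\Ff R^*\varphi$ probes $\Ff\varphi$ only at $\omega=\pm\sqrt{|\sigma|^2+\varrho^2}$; hence any $\varphi$ with $\Ff\varphi$ supported strictly off the cone satisfies $R^*\varphi=0$ and thus $\langle q,\varphi\rangle=\langle h,R^*\varphi\rangle=0$.

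The main computation proceeds as follows. For any even test function $\psi\in\S_e(\R^3)$, I would use $R^*E\psi=\psi$ together with the adjoint of $R$ to write
\begin{equation*}
\langle h,\psi\rangle=\langle h,R^*E\psi\rangle=\langle Rh,E\psi\rangle=\langle q,E\psi\rangle,
\end{equation*}
transfer the identity to the Fourier side via Parseval, and insert the explicit expression for $\Ff E\psi$ from \autoref{E-operator}. The cone support of $\Ff q$ restricts the resulting integral to $\{|\omega|\geq|\sigma|\}$, precisely where $\Ff E\psi$ is given by an explicit formula. The substitution $\omega=\sgn\varrho\sqrt{\varrho^2+|\sigma|^2}$, inverse to $\varrho=\sgn\omega\sqrt{\omega^2-|\sigma|^2}$, is a bijection from $\{|\omega|\geq|\sigma|\}$ onto $\R\times\R^2$ with Jacobian $\omega\,\d\omega=\varrho\,\d\varrho$; performing it converts the integrand into
\begin{equation*}
8\pi\i\varrho\,\Ff q\!\left(\sgn\varrho\sqrt{\varrho^2+|\sigma|^2},\sigma\right)\overline{\Ff\psi(\sigma,\varrho)}
\end{equation*}
(up to signs coming from conjugation). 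Matching against $\langle\Ff h,\Ff\psi\rangle$ for every even $\Ff\psi$ then yields the pointwise identity claimed.

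Finally, for the regularity conclusion, I would test the identity against a $\Psi\in C_c^\infty(U)$, symmetrized to be even in $\varrho$ if necessary: under the hypothesis that $\Ff q$ is regular on $U'$, the pullback $\Ff q(\sgn\varrho\sqrt{\varrho^2+|\sigma|^2},\sigma)$ is a locally integrable function on $U$, so $\Ff h|_U$ is indeed regular and given by the stated expression. The main obstacle I anticipate is handling the evenness restriction: the procedure recovers only the part of $\Ff h$ even in $\varrho$, reflecting that $R$ cannot distinguish $h$ from its mirror image across the observation plane. This is resolved by invoking the standing assumption that $h$ is supported in the upper half-space $\R^2\times\R^+$, under which the even extension across the plane determines $h$ uniquely. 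A secondary technical point is the degeneracy of the change of variables at the cone boundary $\varrho=0$, but this set has measure zero and does not affect the regularity claim on the open region $U$.
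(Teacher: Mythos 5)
Your proposal follows essentially the same route as the paper: both rest on the identity $R^*E\varphi=\varphi$ from \autoref{Thm:RstarE} to write $\langle h,\varphi\rangle=\langle Rh,E\varphi\rangle=\langle q,E\varphi\rangle$, then insert the frequency-domain formula \autoref{E-operator} for $E$ and perform the change of variables $\omega=\sgn\varrho\sqrt{\varrho^2+|\sigma|^2}$ with $\omega\,\d\omega=\varrho\,\d\varrho$. Your extra remarks on the cone support of $\Ff q$ and on the evenness/mirror-image ambiguity are sound and in fact make explicit two points the paper handles elsewhere (the cone-support lemma in the uniqueness section) or leaves implicit (the restriction to $\S_e$ test functions together with $\supp h\subset\R^2\times\R^+$).
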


\begin{proof}
Let $\varphi\in\mathbb{S_e}(\R^3)$ be any test function such that $\Ff\varphi$ is supported in $U$. We claim that $\supp \Ff E\varphi\subset U'$. To prove this claim, let $(\omega, \sigma)\not\in U'$, the definition of $U'$ deduces that $(\sgn\omega\sqrt{\omega^2-\sigma^2}, \sigma)\not\in U$. From \autoref{E-operator}, $\Ff E\varphi(\omega, \sigma)= 8\pi\i\omega (\Ff\psi)(\sgn (\omega)\sqrt{|\omega|^2-|\sigma|^2}, \sigma)= 0$. Thus $\supp \Ff E\varphi\subset U'$. Then
\begin{equation*}
\begin{split}
\langle h,\varphi\rangle&=\langle Rh,E\varphi\rangle\\
&=\int_{U'}(\Ff q)(\omega,\sigma)\overline{(\Ff E\varphi)(\omega,\sigma)}\d\omega\d\sigma\\
&=8\pi\i\int_{U'}(\Ff q)(\omega,\sigma)\overline{\omega(\Ff\varphi)(\sgn\omega\sqrt{\omega^2-|\sigma|^2}, \sigma)}\d\omega\d\sigma.
\end{split}
\end{equation*}
Making a variable substitution $\omega=\sgn\varrho\sqrt{\varrho^2+|\sigma|^2}$ so that $\d\omega=\frac{\varrho}{\omega}\d\varrho$, and then
\begin{equation*}
\begin{split}
\langle \Ff h, \Ff \varphi\rangle &= \langle h,\varphi\rangle\\
&= 8\pi\i\int_{U'}(\Ff q)(\omega,\sigma)\overline{\omega(\Ff\varphi)(\sgn\omega\sqrt{\omega^2-\sigma^2}, \sigma)}\d\omega\d\sigma\\
&= 8\pi\i\int_{U}\varrho(\Ff q)(\sgn\varrho\sqrt{\varrho^2+|\sigma|^2},\sigma)\overline{(\Ff\varphi)(\varrho, \sigma)}\d\varrho\d\sigma,
\end{split}
\end{equation*}
which finishes the proof.
\end{proof}

For any  $f\in\S'(\R^+ \times \R^2)$, its Fourier-Laplace transform $(\Ff f)(z, \sigma)$ is holomorphic with respect to $z$ in the upper half plane $\H^+$. Define
\begin{equation}\label{eq:Skappainverse}
\S'_{\kappa^{-1}}(\R^+ \times \R^2)= \{f\in \S'(\R^+ \times \R^2): \forall z\in\H^+, (\Ff f)(z, \sigma) \text{ can be analytically continued to } z\in \kappa^{-1}(\H^+)\}.
\end{equation}

\begin{lemma}
For every $f(t, \vxi)\in \S'(\R^+ \times \R^2)$, $Af(t, \vxi)\in \S'_{\kappa^{-1}}(\R^+ \times \R^2)$.
\end{lemma}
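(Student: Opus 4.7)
My plan is to produce the analytic continuation of $\Ff(Af)$ explicitly via the same formula that already defines $A$ on $\H^+$. By \autoref{Thm-A}, $Af$ lies in $\S'(\R^+\times\R^2)$ and, unpacking \autoref{Operator-A}, its Fourier--Laplace transform satisfies
\begin{equation*}
\Ff(Af)(z,\sigma)=\Ff f(\kappa(z),\sigma),\qquad z\in\H^+.
\end{equation*}
The key observation is that the right-hand side continues to make sense, and is holomorphic in $z$, on the set $\kappa^{-1}(\H^+)$, which contains $\H^+$ but is in general strictly larger. Establishing this is the entire content of the lemma.

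To carry out the argument, I would first invoke the Paley--Wiener--Schwartz theorem (in the form used in the proof of \autoref{Thm-A}) to conclude that $w\mapsto\Ff f(w,\sigma)$ is holomorphic on $\H^+$. By \autoref{deAttCoeff}(ii), $\kappa$ extends to a holomorphic function $\tilde\kappa$ on $\H^+$. The composition $z\mapsto\Ff f(\tilde\kappa(z),\sigma)$ is therefore defined and holomorphic precisely on $\tilde\kappa^{-1}(\H^+)=\kappa^{-1}(\H^+)$, as a composition of holomorphic maps into the appropriate domains.

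It remains to verify that this composition genuinely \emph{extends} $\Ff(Af)$ rather than merely redefining it, i.e.\ that the two expressions coincide on the overlap. Because $\tilde\kappa$ is non-constant and holomorphic on $\H^+$ with range contained in $\overline{\H^+}$, the open mapping theorem forces $\tilde\kappa(\H^+)\subset\H^+$, so $\H^+\subset\kappa^{-1}(\H^+)$; and on $\H^+$ the formula $\Ff f(\tilde\kappa(z),\sigma)$ reproduces $\Ff(Af)(z,\sigma)$ tautologically. Hence $\Ff f(\tilde\kappa(\cdot),\sigma)$ is the required analytic continuation, placing $Af$ in $\S'_{\kappa^{-1}}(\R^+\times\R^2)$. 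I anticipate no serious obstacle here; the only delicate point is the inclusion $\H^+\subset\kappa^{-1}(\H^+)$, which is what guarantees we are continuing rather than redefining.
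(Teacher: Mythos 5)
Your proposal is correct and follows essentially the same route as the paper: define $g_0(z,\sigma)=\Ff f(\kappa(z),\sigma)$ on $\kappa^{-1}(\H^+)$, observe it is holomorphic there as a composition of holomorphic maps, and note it agrees with $\Ff(Af)$ on $\H^+$, hence is the required continuation. Your additional check that $\H^+\subset\kappa^{-1}(\H^+)$ via the open mapping theorem is a detail the paper leaves implicit, but it does not change the argument.
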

\begin{proof}
For any $f(t, \vxi)\in\S'(\R^+ \times \R^2)$, let $g(t, \vxi)=Af(t, \vxi)$, combining this with \autoref{Operator-A} deduces $(\Ff g)(z, \sigma)=(\Ff f)(\kappa(z), \sigma)$ for all $z\in \H^+$.
Define a holomorphic function
\begin{equation*}
g_0(z, \sigma)=(\Ff f)(\kappa(z), \sigma), \; \text{ for all } z\in\kappa^{-1}(\H^+).
\end{equation*}

Directly $\Ff g$ and $g_0$ coincides in $z\in\H^+$. Consequently, $g_0(z, \sigma)$ is the analytic continuation of $(\Ff g)(z, \sigma)$ to the region $\kappa^{-1}(\H^+)$. This shows that $g\in \S'_{\kappa^{-1}}(\R^+ \times \R^2)$.
\end{proof}

\begin{theorem}\label{Thm-reconstructionA}
Suppose that $q^a\in\S'_{\kappa^{-1}}(\R^+\times\R^2)$, so that $\Ff q^a(z, \cdot)$ is a distribution in $\S'(\R^2)$ for all $z\in \kappa^{-1}(\H^+)$. Let $U\subset\R\times\R^2$ be an open region, and $U_\omega=\{\sigma\in\R^2\mid(\omega,\sigma)\in U\}$ be the "slices" of $U$ for every $\omega\in\R$\}. If for every $\omega\in\R$ such that $U_\omega\ne\emptyset$, the distribution $\lim_{z\to\kappa^{-1}(\omega)}\Ff q^a(z,\cdot)\in\S'(\R^2)$ exists and is regular on the set $U_\omega=\{\sigma\in\R^2\mid(\omega,\sigma)\in U\}$.

Then $\Ff q(\omega,\sigma)$ is regular on $U$, and the following equality holds for all $(\omega,\sigma)\in U$,

%If there exists an interval $(\omega_1,\omega_2)$ and a map $q^a_0:(\omega_1,\omega_2)\to\S'(\R^2)$ such that the limit $\lim_{?\to 0}\Ff q^a(\kappa^{-1}(\omega+i?),\cdot)=q^a_0(\omega)$ for all $\omega\in(\omega_1,\omega_2)$, then the limit $\lim_{?\to 0}\Ff q(\omega+i?,\cdot)$ also equal to $q^a_0(\omega)$ for all $\omega\in(\omega_1,\omega_2)$. Furthermore, if there exists an open set $U\subset\R^2$ such that $q^a_0(\omega)$ is regular in $U$ for all $\omega\in(\omega_1,\omega_2)$, then $\Ff q$ is regular in the region $(\omega_1,\omega_2)\times U$, and we have the equation
\begin{equation*}
(\Ff q)(\omega,\sigma)=(\Ff q^a)(\kappa^{-1}(\omega),\sigma).
\end{equation*}
\end{theorem}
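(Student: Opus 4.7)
The plan is to invert the frequency-domain characterization of $A$ supplied by \autoref{Thm-A}. Since $q^a = Aq$ (by \autoref{relation-Ra-R-A}), that theorem yields
\[
(\Ff q^a)(z, \sigma) = (\Ff q)(\kappa(z), \sigma), \quad z \in \H^+,
\]
as an equality of $\S'(\R^2)$-valued holomorphic functions of $z$ on $\H^+$. The hypothesis $q^a \in \S'_{\kappa^{-1}}(\R^+ \times \R^2)$ from \autoref{eq:Skappainverse} says precisely that $(\Ff q^a)(z, \sigma)$ extends analytically from $\H^+$ to the larger open set $\kappa^{-1}(\H^+)$, so that $(\Ff q^a)(\kappa^{-1}(w), \sigma)$ is well defined for every $w \in \H^+$.

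First I would substitute $w = \kappa(z)$. The injective non-constant Herglotz map $\kappa$ sends $\H^+$ into $\H^+$ (by the open mapping theorem, since its image lies in $\overline{\H^+}$ and is open), and $\kappa^{-1}$ is holomorphic on $\kappa(\H^+)$ by \autoref{thm:main}. On the non-empty open subset $\kappa(\H^+) \subset \H^+$ the identity rewrites as
\[
(\Ff q)(w, \sigma) = (\Ff q^a)(\kappa^{-1}(w), \sigma).
\]
Both sides are holomorphic $\S'(\R^2)$-valued functions on all of $\H^+$: the left directly, and the right by composing the holomorphic $\kappa^{-1}\colon \H^+ \to \C$ with the analytically continued $\Ff q^a$. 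Applying the identity theorem pairwise against any fixed Schwartz function in $\sigma$ propagates the equality from $\kappa(\H^+)$ to the whole connected domain $\H^+$.

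Next I would take distributional boundary values as $w \to \omega \in \R$. The left-hand limit is the ordinary distributional Fourier transform $(\Ff q)(\omega, \cdot) \in \S'(\R^2)$. Under the substitution, the right-hand limit becomes $\lim_{z \to \kappa^{-1}(\omega)} (\Ff q^a)(z, \cdot)$, which by hypothesis exists in $\S'(\R^2)$ and is regular on $U_\omega$, represented there by $\sigma \mapsto (\Ff q^a)(\kappa^{-1}(\omega), \sigma)$. Consequently $(\Ff q)(\omega, \cdot)$ is itself regular on $U_\omega$ and equals that function there. Sweeping $\omega$ across the slices assembles to regularity of $\Ff q$ on $U$, together with the claimed reconstruction identity.

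The main obstacle I anticipate is the careful bookkeeping of the two substitution/limit steps in the distributional picture: verifying that $(\Ff q^a)(\kappa^{-1}(w), \cdot)$ genuinely is a holomorphic $\S'(\R^2)$-valued function of $w \in \H^+$ after the analytic continuation (so that the identity theorem applies in the paired form), and that the boundary limit along the curve $\kappa^{-1}(w) \to \kappa^{-1}(\omega)$ coincides, after the change of variable, with the distributional boundary value of $\Ff q$ at $\omega$ on the slice $U_\omega$. Both points rely on continuity of $\kappa^{-1}$ on $\overline{\H^+}$ together with the local regularity hypothesis, and should reduce to a routine but attention-demanding combination of the identity theorem and weak-$*$ limit arguments.
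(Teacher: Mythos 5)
Your proposal is correct and follows essentially the same route as the paper: both invert the frequency-domain relation $(\Ff q^a)(z,\sigma)=(\Ff q)(\kappa(z),\sigma)$ via $\kappa^{-1}$ and the analytic continuation guaranteed by $q^a\in\S'_{\kappa^{-1}}$, then pass to distributional boundary values $\eta\to 0$ on the slices $U_\omega$ and pair against test functions with $\Ff\varphi$ supported in $U$. Your write-up is somewhat more explicit about the identity-theorem step propagating the equality from $\kappa(\H^+)$ to all of $\H^+$, which the paper leaves implicit, but this is a difference of presentation rather than of method.
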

\begin{proof}
Since for any $\varphi(t, \xi)\in \S(\R^+\times\R^2)$, the operator $A$ satisfies
\begin{equation*}
\begin{split}
\langle \Ff(Aq)( \omega, \sigma), \Ff \varphi(\omega, \sigma)\rangle &= \langle \Ff q^a(\omega, \sigma), \Ff \varphi(\omega, \sigma)\rangle\\
&=\langle \Ff q(\kappa(\omega), \sigma), \Ff \varphi(\omega, \sigma)\rangle,
\end{split}
\end{equation*}
and from assumption that $\lim_{z\to\kappa^{-1}(\omega)}\Ff q^a(z,\cdot)\in\S'(\R^2)$ exists, it implies that the distribution $\lim_{z\to\omega}\Ff q(z,\cdot)$ exists and is regular in $U_\omega$. Therefore, for all $\varphi$ such that $\Ff\varphi$ is supported in $U$,
\begin{equation*}
\begin{split}
\langle \Ff q, \Ff \varphi\rangle&= \langle q, \varphi\rangle\\
&= \lim_{\eta\to0}\int_{\R}\langle (\Ff q)(\omega+\eta i,\cdot),(\Ff\varphi)(\omega,\cdot)\rangle\d\omega\\
&= \int_{U_\omega\ne\emptyset}\lim_{\eta\to0}\left(\int_{U_\omega}(\Ff q^a)(\kappa^{-1}(\omega+\eta i),\sigma)\overline{(\Ff\varphi)(\omega,\sigma)}\d\sigma\right)\d\omega\\
&= \int_{U}(\Ff q^a)(\kappa^{-1}(\omega),\sigma)\overline{(\Ff\varphi)(\omega,\sigma)}\d\sigma\d\omega.
\end{split}
\end{equation*}
Thus, $(\Ff q)(\omega,\sigma)$ is equal to the function $(\Ff q^a)(\kappa^{-1}(\omega),\sigma)$ in the region $U$.
\end{proof}

\begin{corollary}
Assume the conditions of \autoref{Thm-reconstructionR} and \autoref{Thm-reconstructionA} hold, then $h(\vx)$ is reconstructed by
\begin{equation*}
(\Ff h)(\sigma,\varrho)=8\pi\i\varrho(\Ff q^a)\left(\kappa^{-1}(\sgn\varrho\sqrt{\varrho^2+|\sigma|^2}),\sigma)\right).
\end{equation*}
\end{corollary}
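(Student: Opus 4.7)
The plan is to chain together the two preceding reconstruction theorems, since the factorization $R^a = A\circ R$ from \autoref{relation-Ra-R-A} suggests that inverting $R^a$ amounts to first undoing $A$ (producing $q$ from $q^a$) and then undoing $R$ (producing $h$ from $q$). Each of the two intermediate inversions has already been established, one in \autoref{Thm-reconstructionA} and the other in \autoref{Thm-reconstructionR}, so the work of the corollary is essentially to verify that the hypotheses compose coherently and then to substitute one formula into the other.

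First I would invoke \autoref{Thm-reconstructionA}. Its hypothesis gives an open region, call it $U_A\subset\R\times\R^2$, where the limit of $\Ff q^a(z,\cdot)$ along $z\to\kappa^{-1}(\omega)$ exists and is regular on each slice $U_{A,\omega}$. Its conclusion then yields the pointwise identity
\begin{equation*}
(\Ff q)(\omega,\sigma)=(\Ff q^a)(\kappa^{-1}(\omega),\sigma)\quad\text{on }U_A.
\end{equation*}
Next I would invoke \autoref{Thm-reconstructionR}, which requires an open region $U\subset\R^2\times\R$ whose image $U'$ under the map $(\sigma,\varrho)\mapsto(\sgn\varrho\sqrt{\varrho^2+|\sigma|^2},\sigma)$ lies in the domain where $\Ff q$ is regular. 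Choosing $U$ such that $U'\subset U_A$, the theorem gives
\begin{equation*}
(\Ff h)(\sigma,\varrho)=8\pi\i\varrho(\Ff q)(\sgn\varrho\sqrt{\varrho^2+|\sigma|^2},\sigma)\quad\text{on }U.
\end{equation*}
Substituting the first identity, evaluated at $\omega=\sgn\varrho\sqrt{\varrho^2+|\sigma|^2}$, into the second yields the claimed formula.

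The only real subtlety is the bookkeeping of domains: I have to make sure the open sets $U$ and $U'$ match, and that the assumption $q^a\in\S'_{\kappa^{-1}}(\R^+\times\R^2)$ from \autoref{eq:Skappainverse} provides enough analytic continuation of $\Ff q^a$ so that $\Ff q$ is regular on the image of $U$ under the map. Since both theorems are quoted verbatim, no new calculation is required; the step most deserving of care is confirming that the limit of $\Ff q^a$ at $\kappa^{-1}$ of the ``cone variable'' $\sgn\varrho\sqrt{\varrho^2+|\sigma|^2}$ is well-defined, but this is automatic once $U'$ is inside $U_A$ and the hypotheses of \autoref{Thm-reconstructionA} are satisfied there. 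The composition then delivers the reconstruction formula exactly as stated.
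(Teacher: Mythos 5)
Your proposal is correct and matches the paper's intent exactly: the paper gives no explicit proof for this corollary, treating it as the immediate composition of \autoref{Thm-reconstructionA} (which yields $(\Ff q)(\omega,\sigma)=(\Ff q^a)(\kappa^{-1}(\omega),\sigma)$) with \autoref{Thm-reconstructionR} (which yields $(\Ff h)(\sigma,\varrho)=8\pi\i\varrho(\Ff q)(\sgn\varrho\sqrt{\varrho^2+|\sigma|^2},\sigma)$), which is precisely what you do. Your extra care in matching the open sets $U'$ and $U_A$ is a sensible bit of bookkeeping that the paper leaves implicit.
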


\subsection{Existence and Uniqueness}

In this part we will describe that the range of the operator $R$ is exactly the space $\S'_{\text{cone}}$. After this on one hand we could prove $R$ is a bijection mapping from $\S'(\R^+\times\R^2)$ to $\S'_{\text{cone}}$. On the other hand the operator $A$ is also a bijection, so the composition $R^a=AR$ is also bijective. This gives us the existence and uniqueness of source term for the inverse problem.

%Only in this exact space $S'_{\text{cone}}$, the operator $R$ is surjective and injective, which deduces the uniquiness of the solution.
%???????we need $R^a$ is injective and surjective...not only $R$.

\begin{lemma}
For every $f\in S'$, $Rf\in S'_{\text{cone}}$.
\end{lemma}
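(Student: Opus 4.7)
My plan is to use the adjoint characterization of the extended operator $R$ together with the frequency-domain formula for $R^*$ from \autoref{Thm-R*-representation}. Unwinding the definition of $\S'_{\text{cone}}$ in \autoref{S'cone}, it suffices to show that for every Schwartz test function $\phi\in\S(\R\times\R^2)$ supported in the open set $\set{(\omega,\sigma)\in\R\times\R^2\mid|\omega|<|\sigma|}$, the pairing $\scalar{\Ff(Rf)}{\phi}$ vanishes. By the definition of the distributional Fourier transform together with the extension rule $\scalar{Rf}{\psi}=\scalar{f}{R^*\psi}$, this pairing equals $\scalar{f}{R^*\Ff\phi}$.

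Since $f\in\S'$ is arbitrary, the whole argument reduces to proving that $R^*\Ff\phi=0$. To establish this I apply the frequency identity \autoref{Rstar-frequency} with $\varphi=\Ff\phi$, using the Fourier-inversion identity $\Ff^2\phi(\cdot)=\phi(-\cdot)$ that holds under the convention \autoref{FourierTransform}. This gives
\begin{equation*}
\Ff(R^*\Ff\phi)(\sigma,\varrho)=\frac{\phi(-\sqrt{|\sigma|^2+\varrho^2},-\sigma)-\phi(\sqrt{|\sigma|^2+\varrho^2},-\sigma)}{8\pi\i\sqrt{|\sigma|^2+\varrho^2}}.
\end{equation*}
At each of the two evaluation points $(\pm\sqrt{|\sigma|^2+\varrho^2},-\sigma)$ the first coordinate has absolute value $\sqrt{|\sigma|^2+\varrho^2}\ge|\sigma|=|-\sigma|$, so the point lies in $\set{|\omega|\ge|\sigma|}$, outside the support of $\phi$. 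Hence both terms vanish, so $\Ff(R^*\Ff\phi)\equiv 0$, and therefore $R^*\Ff\phi=0$ by Fourier injectivity on $\S$.

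Putting these steps together yields $\scalar{\Ff(Rf)}{\phi}=0$ for every admissible $\phi$, which is precisely the condition $\supp\Ff(Rf)\subset\set{|\omega|\ge|\sigma|}$ defining $\S'_{\text{cone}}$. The only mild technicality, which I regard as a bookkeeping point rather than a genuine obstacle, is that $\Ff\phi$ lies in $\S(\R\times\R^2)$ rather than $\S(\R^+\times\R^2)$; however the integral formula \autoref{Rstar_time} for $R^*$ evaluates its argument only at non-negative times, and the derivation of the frequency identity \autoref{Rstar-frequency} in the proof of \autoref{Thm-R*-representation} carries over verbatim to this slightly enlarged domain, so that $R^*\Ff\phi$ and its Fourier transform are well-defined and the vanishing computation above makes sense.
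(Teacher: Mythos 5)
Your proof is correct and follows essentially the same route as the paper: both reduce the claim to pairing $Rf$ against a test function, pass to $R^*$ via the adjoint extension, and observe from the frequency-domain formula \autoref{Rstar-frequency} that the evaluation points $(\pm\sqrt{|\sigma|^2+\varrho^2},\sigma)$ lie in the closed cone $|\omega|\ge|\sigma|$, forcing $R^*$ of the test function to vanish. You merely make the duality bookkeeping (working with $\phi$ and $\varphi=\Ff\phi$, and the $\S(\R^+\times\R^2)$ versus $\S(\R\times\R^2)$ point) more explicit than the paper does.
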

\begin{proof}
Let $\varphi$ be an arbitrary function in $S$ such that $(\Ff\varphi)(\omega, \sigma)=0$ for any $|\omega|\geq |\sigma|$. Then directly,
\begin{equation}\label{Rrangeadjoint}
\langle Rf, \varphi\rangle=\langle f, R^*\varphi\rangle=\langle\Ff f, \Ff (R^*\varphi)\rangle.
\end{equation}

Recall the representation of $R^*$ in \autoref{Rstar-frequency}. It is obvious that $\sqrt{|\sigma|^2+\varrho^2} \geq |\sigma|$. Using the assumptions on $\varphi$, we infer that $\Ff\varphi(\sqrt{|\sigma|^2+\varrho^2}, \sigma)=\Ff\varphi(-\sqrt{|\sigma|^2+\varrho^2}, \sigma)=0$, which deduces
\begin{equation*}
(\Ff (R^*\varphi))(\sigma, \varrho)=0
\end{equation*}
for all $(\sigma, \varrho)\in \R^2 \times \R^+$, plugging this into \autoref{Rrangeadjoint} yields $\langle Rf, \varphi \rangle=0$. This proves $Rf\in \S'_{\text{cone}}$.
\end{proof}

\begin{lemma}\label{Thm-R-surjective}
For every $g\in\S'_{cone}(\R^+\times\R^2)$, there exists a distribution $f\in\mathcal{S}'(\R^+\times\R^2)$
such that $g=Rf$, which deduces $R$ is surjective.
\end{lemma}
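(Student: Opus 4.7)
The plan is to construct a preimage of $g$ explicitly via the operator $E$ of \autoref{Ethm}, exploiting that $E$ serves as a right-inverse for $R^*$ on $\S_e(\R^3)$ (\autoref{Thm:RstarE}). The strategy mirrors the duality used in \autoref{Relation-ER}: on the subspace $\S_e \subset \S$, $E$ undoes $R^*$, and one extends to all of $\S$ by first symmetrizing. In essence, I would take $f$ to be the adjoint-style object $E^*g$, formulated carefully at the level of test-function pairings.

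To define the candidate $f$, for any $\psi \in \S(\R^3)$ let $\psi_e$ denote its symmetrization into $\S_e(\R^3)$ (averaging with its reflection in the variable singled out in \autoref{Space_Se}), and set
\[
\langle f, \psi\rangle := \langle g, E\psi_e\rangle.
\]
Linearity is immediate. To see $f \in \S'$, I would chain three continuity statements: the symmetrization $\psi \mapsto \psi_e$ is continuous from $\S$ into $\S_e$; the operator $E: \S_e \to \S_e$ is continuous by \autoref{Ethm}; and $g$ is a continuous linear functional on the appropriate Schwartz space. Composition gives the required continuity of $f$.

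To verify $Rf = g$, I would compute, for any $\varphi \in \S(\R^+ \times \R^2)$,
\[
\langle Rf, \varphi\rangle = \langle f, R^*\varphi\rangle = \langle g, E(R^*\varphi)_e\rangle.
\]
Since $R^*\varphi \in \S_e(\R^3)$ by \autoref{Thm-R*-representation}, the symmetrization is redundant, so the right-hand side equals $\langle g, ER^*\varphi\rangle$. Applying \autoref{Relation-ER} to $g \in \S'_{\text{cone}}$ then yields $\langle g, ER^*\varphi\rangle = \langle g, \varphi\rangle$, so $Rf = g$ as distributions and $\varphi$ being arbitrary finishes the identity.

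The main obstacle I anticipate is reconciling the variable conventions and the precise target space: the construction above naturally produces an $f$ carrying an evenness symmetry in one of the spatial variables, whereas the lemma asks for $f \in \S'(\R^+\times\R^2)$. One addresses this either by invoking the standard correspondence between even tempered distributions on $\R^3$ and distributions supported on a half-space (via symmetric extension/restriction), or by checking directly that the action of $f$ on test functions supported in $\R^+\times\R^2$ still yields $Rf = g$ in the asserted sense. Once this bookkeeping is sorted out, combined with the earlier lemma showing $Rf \in \S'_{\text{cone}}$ always, we conclude that $R$ is a bijection between the appropriate subspaces of $\S'$ and $\S'_{\text{cone}}$.
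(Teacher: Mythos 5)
Your proposal is correct and follows essentially the same route as the paper: define $f$ by dualizing $E$ against $g$, i.e.\ $\langle f,\varphi\rangle=\langle g,E\varphi\rangle$, and then verify $Rf=g$ via $\langle Rf,\varphi\rangle=\langle g,ER^*\varphi\rangle=\langle g,\varphi\rangle$ using \autoref{Relation-ER}. Your extra symmetrization step is a harmless (indeed slightly more careful) refinement, since $R^*\varphi$ already lies in $\S_e$ so it drops out exactly where it matters.
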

\begin{proof}
Let $f\in\S'(\R^+\times\R^2)$ be the distribution defined by
\begin{equation*}
  \langle f,\varphi\rangle=\langle g, E\varphi\rangle
\end{equation*}
for all $\varphi\in\S(\R^+\times\R^2)$. Using \autoref{Relation-ER},
\begin{equation*}
\begin{split}
\langle Rf,\varphi\rangle&=\langle f,R^*\varphi\rangle\\
&=\langle g,ER^*\varphi\rangle\\
&=\langle g,\varphi\rangle.
\end{split}
\end{equation*}

so $g=Rf$. This means $R$ is surjective.
\end{proof}

\begin{lemma}\label{Thm-R-injective}
Suppose that $h\in\S'(\R^+\times\R^2)$ such that $Rh=0$, then $h=0$, which means $R$ is injective.
\end{lemma}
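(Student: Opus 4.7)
The strategy is to combine the right-inverse identity $R^{*}E=\mathrm{Id}$ on $\mathcal{S}_{e}(\R\times\R^{2})$ established in \autoref{Thm:RstarE} with the adjoint definition of the extended operator $R:\mathcal{S}'\to\mathcal{S}'$. Starting from $Rh=0$, the defining identity $\langle Rh,\varphi\rangle=\langle h,R^{*}\varphi\rangle$ immediately yields $\langle h,R^{*}\varphi\rangle=0$ for every $\varphi\in\mathcal{S}(\R^{+}\times\R^{2})$. Applying \autoref{Thm:RstarE} with $\varphi=E\psi$ for an arbitrary $\psi\in\mathcal{S}_{e}(\R\times\R^{2})$, one obtains $R^{*}E\psi=\psi$, whence
\[
\langle h,\psi\rangle=\langle h,R^{*}E\psi\rangle=\langle Rh,E\psi\rangle=0\quad\text{for every }\psi\in\mathcal{S}_{e}(\R\times\R^{2}).
\]
In other words, $h$ annihilates the subspace of Schwartz functions that are even in the $\vx_{3}$-coordinate.

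To upgrade this to $h=0$, I will use the support constraint $\supp h\subset\R^{+}\times\R^{2}$. For any test function $\tilde\psi\in\mathcal{S}(\R\times\R^{2})$ whose support is contained in the open half-space $\{\vx_{3}>0\}$, the even reflection $\tilde\psi^{e}(\vx_{12},\vx_{3}):=\tilde\psi(\vx_{12},|\vx_{3}|)$ is smooth because $\tilde\psi$ vanishes to infinite order at $\vx_{3}=0$, and hence $\tilde\psi^{e}\in\mathcal{S}_{e}$. Since $\tilde\psi^{e}$ agrees with $\tilde\psi$ on the support of $h$, the previous step gives $\langle h,\tilde\psi\rangle=\langle h,\tilde\psi^{e}\rangle=0$. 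For an arbitrary Schwartz $\tilde\psi$, multiplication by a smooth cutoff $\chi_{\ve}$ that equals $1$ on $\{\vx_{3}\ge\ve\}$ and $0$ on $\{\vx_{3}\le 0\}$ produces $\chi_{\ve}\tilde\psi$ supported in the open half-space, for which $\langle h,\chi_{\ve}\tilde\psi\rangle=0$; sending $\ve\to 0$ and appealing to the continuity of the pairing in $\mathcal{S}'(\R^{+}\times\R^{2})$ then gives $\langle h,\tilde\psi\rangle=0$, so $h=0$.

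The main obstacle lies in the limit $\ve\to 0$. Because $R^{*}\varphi$ is even in $\vx_{3}$, the range of $R^{*}$ cannot detect any odd contribution living precisely at the boundary $\{\vx_{3}=0\}$; indeed, a direct computation shows that $\partial_{\vx_{3}}(R^{*}\varphi)(\vx_{12},0)=0$, so $h=\delta'(\vx_{3})\otimes g(\vx_{12})$ would spuriously lie in the kernel of $R$ if it were admitted. The limit step therefore hinges on interpreting $\mathcal{S}'(\R^{+}\times\R^{2})$ as distributions that are genuinely supported in the open half-space (equivalently, uniquely determined by their action on test functions vanishing in a neighborhood of $\vx_{3}=0$); under this convention, adopted implicitly throughout \autoref{sec:planar}, the cutoff argument is legitimate and the proof closes.
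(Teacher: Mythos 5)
The first display in your plan is, verbatim, the paper's entire proof: from $Rh=0$ and \autoref{Thm:RstarE} one gets $\langle h,\psi\rangle=\langle h,R^*E\psi\rangle=\langle Rh,E\psi\rangle=0$ for every even $\psi$, and the paper stops there, asserting that this "deduces $h=0$". Everything you add after that display is material the paper does not contain, and it is the more interesting part of your write-up. You are right that annihilating $\S_e$ does not by itself force $h=0$; the support constraint on $h$ must enter, and your reflection argument for test functions supported in the open half-space is sound (the difference $\tilde\psi-\tilde\psi^{e}$ has support disjoint from $\supp h$, so $h$ cannot see it). You are also right to single out the boundary: since $R^*\varphi$ is even in $\vx_3$, any odd distribution carried by the hyperplane $\{\vx_3=0\}$, e.g.\ $\delta'(\vx_3)\otimes g(\vx_{12})$, satisfies $\langle h,R^*\varphi\rangle=-\langle g,\partial_{\vx_3}(R^*\varphi)(\cdot,0)\rangle=0$ and hence lies in the kernel of the extended $R$. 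This is a genuine counterexample to the lemma unless $\S'(\R^+\times\R^2)$ is interpreted as excluding distributions charging the boundary --- a convention the paper never states, and a gap its one-line proof silently skips over. The one soft spot in your own completion is the passage to general $\tilde\psi$ via cutoffs: $\chi_{\ve}\tilde\psi$ does not converge to $\tilde\psi$ in the Schwartz topology (derivatives of $\chi_{\ve}$ blow up), so "continuity of the pairing" does not close the limit; you acknowledge this and retreat to the same convention. In short: your proposal uses the same core mechanism as the paper, but gives a more honest accounting of the final step, and in doing so exposes that the lemma as stated requires either a restriction on the admissible $h$ or an additional argument that neither your proposal nor the paper supplies.
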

\begin{proof}
Suppose that $Rh=0$, for any $\varphi\in \S_e$, by \autoref{Thm:RstarE} we have,
\begin{equation*}
\langle h, \varphi\rangle = \langle h, R^*E \varphi\rangle = \langle Rh, E\varphi\rangle=0,
\end{equation*}
which deduces $h=0$.

\end{proof}

Combining \autoref{Thm-R-surjective} and \autoref{Thm-R-injective} shows that $R$ is a bijection from $\S'(\R^+\times\R^2)$ to $\S'_{\text{cone}}$. The next step is to prove that $A$ is also a bijection from $\S'(\R^+\times\R^2)$ to $\S'_{\kappa^{-1}}(\R^+\times\R^2)$, which finally concludes the bijectivity of $R^a$.

\begin{lemma}\label{Thm-A-surjective}
For every $q^a\in \S'_{\kappa^{-1}}(\R^+\times\R^2)$, there exists a distribution $q\in \S'(\R^+\times\R^2)$ such that $q^a=Aq$.
\end{lemma}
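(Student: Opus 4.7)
The plan is to invert $A$ in the frequency domain. Since \autoref{Operator-A} identifies $A$ with the substitution $\Ff(Aq)(z,\sigma) = \Ff q(\kappa(z),\sigma)$, the natural candidate for a preimage of $q^a$ is the distribution whose Fourier--Laplace transform is $\Ff q^a\circ\kappa^{-1}$ in the first variable. The hypothesis $q^a\in\S'_{\kappa^{-1}}(\R^+\times\R^2)$ from \autoref{eq:Skappainverse} is precisely tailored to make this composition legitimate: it provides an analytic continuation of $\Ff q^a(z,\sigma)$ from $\H^+$ to the larger region $\kappa^{-1}(\H^+)$, so that $\Ff q^a(\kappa^{-1}(w),\sigma)$ is meaningful for every $w\in\H^+$. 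By \autoref{thm:main}, $\kappa^{-1}$ itself is analytic on $\H^+$.

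I would therefore define
\[
Q(w,\sigma) \;:=\; (\Ff q^a)\bigl(\kappa^{-1}(w),\sigma\bigr),\qquad w\in\H^+,\ \sigma\in\R^2,
\]
and verify the two hypotheses of \autoref{Hormander} (the Paley--Wiener characterization of Fourier--Laplace transforms of tempered distributions supported in $\R^+\times\R^2$). Holomorphy of $Q$ in $w$ is immediate, as it is the composition of the holomorphic maps $\kappa^{-1}$ and the analytic continuation of $\Ff q^a$. For the polynomial growth $|Q(w,\sigma)|\le C(1+|w|+|\sigma|)^{N'}$, I would compose the polynomial bound on $\kappa^{-1}$ (coming from the Nevanlinna--Herglotz structure of $\tilde\kappa$ in \autoref{thm:main}) with the polynomial growth of $\Ff q^a$ on its extended domain, mirroring the estimate \autoref{estimate-DirectPbm-OperatorA} used in the proof of \autoref{Thm-A}. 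Once \autoref{Hormander} produces $q\in\S'(\R^+\times\R^2)$ with $\Ff q = Q$ on $\H^+$, the identity $Aq = q^a$ drops out of \autoref{Operator-A}: for all admissible test functions,
\[
\Ff(Aq)(z,\sigma) = \Ff q(\kappa(z),\sigma) = Q(\kappa(z),\sigma) = \Ff q^a(\kappa^{-1}(\kappa(z)),\sigma) = \Ff q^a(z,\sigma),
\]
using the injectivity of $\kappa$ on $\overline{\H^+}$ from \autoref{thm:main}(i), and then injectivity of the Fourier transform gives $Aq=q^a$.

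The main technical obstacle I anticipate is the polynomial growth verification for $Q$ on \emph{all} of $\H^+$. By \autoref{thm:main}, $\kappa^{-1}(\H^+)$ generally spills into the lower half-plane (on $\H^+\setminus\kappa(\overline{\H^+})$), so the standard Paley--Wiener bound that $\Ff q^a$ inherits from $q^a\in\S'(\R^+\times\R^2)$ --- which a priori is only controlled on $\overline{\H^+}$ --- does not immediately give what is needed on the extension. Handling this will likely require either reading an implicit growth condition into the membership $q^a\in\S'_{\kappa^{-1}}$, or invoking a Phragmen--Lindelof argument to propagate polynomial growth from the real axis (or from $\overline{\H^+}$) into the extended region $\kappa^{-1}(\H^+)$, using the polynomial bounds on $\tilde\kappa$ and $\kappa^{-1}$ to constrain the geometry of this region.
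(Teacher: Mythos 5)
Your proposal follows essentially the same route as the paper: define $q_0(z,\sigma)=\Ff q^a(\kappa^{-1}(z),\sigma)$ on $\H^+$, invoke \autoref{Hormander} to realize it as the Fourier--Laplace transform of some $q\in\S'(\R^+\times\R^2)$, and then read off $Aq=q^a$ from \autoref{Operator-A}. The technical obstacle you flag --- verifying the polynomial growth bound for $q_0$ on all of $\H^+$ when $\kappa^{-1}(\H^+)$ spills outside $\overline{\H^+}$ --- is in fact left unaddressed by the paper's own proof, which simply asserts the applicability of \autoref{Hormander}, so your caution there is warranted rather than a defect of your argument.
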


\begin{proof}
By the definition of the space $\S'_{\kappa^{-1}}(\R^+\times\R^2)$, the function $\Ff q^a(z, \sigma)$ is holomorphic in the region $\kappa^{-1}(\H^+)\times\R^2$.

Since $\kappa^{-1}(z)$ is holomorphic from \autoref{deAttCoeff} (ii), define a holomorphic function
\begin{equation*}
q_0(z, \sigma)=\Ff q^a(\kappa^{-1}(z), \sigma),\; \text{ for all } z\in\H^+.
\end{equation*}
 By \autoref{Hormander}, $q_0$ is the Fourier-Laplace transform of a distribution $q\in\S'(\R^+\times\R^2)$.

Recall the representation of the operator $A$ in \autoref{Operator-A},
\begin{equation*}
\Ff Aq(z, \sigma)=\Ff q(\kappa(z), \sigma)=q_0(\kappa(z), \sigma)=\Ff q^a(z, \sigma)
\end{equation*}
for all $z\in \H^+$. This means $Aq=q^a$ as claimed.
\end{proof}

\begin{lemma}\label{Thm-A-injective}
Suppose that $q\in\S'(\R^+\times\R^2)$ such that $Aq=0$, then $q=0$, which means operator $A$ is injective.
\end{lemma}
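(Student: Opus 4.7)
The plan is to invert the identity defining $A$ at the level of Fourier--Laplace transforms and then invoke the identity theorem for holomorphic functions. Recall that by \autoref{Operator-A}, for any $q\in\S'(\R^+\times\R^2)$ the Fourier--Laplace transform of $Aq$ satisfies
\[
(\Ff Aq)(z,\sigma)=(\Ff q)(\kappa(z),\sigma),\qquad z\in\H^+,
\]
as a distribution in $\sigma$, holomorphic in $z$. Thus $Aq=0$ forces $(\Ff q)(\kappa(z),\sigma)=0$ for every $z\in\H^+$ in the sense of $\S'(\R^2)$-valued holomorphic functions. The task is then to upgrade this identity on the image $\kappa(\H^+)$ to the identity $\Ff q\equiv 0$ on all of $\H^+$.

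First I would test against an arbitrary $\psi\in\S(\R^2)$ and set
\[
F(z):=\langle (\Ff q)(z,\cdot),\psi\rangle,\qquad z\in\H^+.
\]
By \autoref{Hormander} applied to $q\in\S'(\R^+\times\R^2)$, the function $F$ is holomorphic on $\H^+$. The hypothesis $Aq=0$ then says $F(\kappa(z))=0$ for all $z\in\H^+$.

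Next I would argue that $\kappa(\H^+)$ is a non-empty open subset of $\H^+$. By \autoref{deAttCoeff}(ii) the continuation $\tilde\kappa$ is holomorphic on $\H^+$, and by \autoref{thm:main}(i) it is non-constant (in fact injective), so the open mapping theorem for holomorphic functions guarantees that $\kappa(\H^+)$ is open in $\C$. The image cannot contain any real point, because such a point would be interior to $\kappa(\H^+)$ while lying on the boundary of $\overline{\H^+}$, contradicting $\tilde\kappa(\overline{\H^+})\subset\overline{\H^+}$. Hence $\kappa(\H^+)\subset\H^+$ is a non-empty open set.

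Finally, $F$ is holomorphic on the connected domain $\H^+$ and vanishes on the open set $\kappa(\H^+)$, so by the identity theorem $F\equiv 0$ on $\H^+$. Since $\psi\in\S(\R^2)$ was arbitrary, $(\Ff q)(z,\cdot)=0$ in $\S'(\R^2)$ for every $z\in\H^+$. Taking the boundary limit $z\to\omega\in\R$, which exists in $\S'(\R\times\R^2)$ by the Paley--Wiener--Schwartz characterisation used in \autoref{Hormander}, yields $\Ff q=0$ in $\S'(\R\times\R^2)$, and the injectivity of the Fourier transform on $\S'$ gives $q=0$. The only delicate step is handling the holomorphic extension and its boundary values in the distributional sense; this is why I pair with a test function in $\sigma$ first and then appeal to the scalar identity theorem, which reduces the argument to a standard fact about holomorphic functions on $\H^+$.
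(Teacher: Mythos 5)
Your proof is correct and follows essentially the same route as the paper's: both use the defining relation $(\Ff Aq)(z,\sigma)=(\Ff q)(\kappa(z),\sigma)$ together with the identity theorem for the holomorphic function $z\mapsto\Ff q(z,\sigma)$ to conclude $\Ff q\equiv 0$ on $\H^+$. The only difference is that the paper deduces vanishing on the curve $\kappa(\R)$ whereas you deduce vanishing on the open set $\kappa(\H^+)$ via the open mapping theorem; your variant is marginally more robust, since it avoids the question of whether $\kappa(\R)$ has an accumulation point in the interior of $\H^+$.
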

\begin{proof}
Suppose that $q\in\S'(\R^+\times\R^2)$ such that $Aq=0$. Then using \autoref{Operator-A},
\begin{equation*}
\Ff q(\kappa(\omega), \sigma)=\Ff Aq(\omega, \sigma)=0
\end{equation*}
holds for every $\omega\in\R$. This means the holomorphic function $z\mapsto \Ff q(z, \sigma)$ vanishes on the curve $\kappa(\R)$, and therefore it must equal to zero everywhere in $\H^+$. Consequently, $\Ff q=0$, which implies $q=0$.
\end{proof}

Combining \autoref{Thm-A-surjective} and \autoref{Thm-A-injective} shows that $A$ is a bijection from $\S'(\R^+\times\R^2)$ to $\S'_{\kappa^{-1}}(\R^+\times\R^2)$. It follows that
\begin{corollary}
$h\in S'(\R^2\times\R^+)$, and $q^a(t, \vxi)\in\S'_{\kappa^{-1}}(\R^2\times\R^+)$ be the integrated measurement.  The operator $R^a$ is a bijection, which means the reconstruction formula \autoref{eq:reconstructionPlane} is unique.
\end{corollary}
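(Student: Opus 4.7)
My plan is to deduce bijectivity of $R^a$ almost mechanically from the factorization $R^a = A \circ R$ proved in \autoref{relation-Ra-R-A}, together with the four preceding lemmas that establish bijectivity of each factor separately. The heavy lifting has already been done; the corollary is essentially bookkeeping about composing two bijections whose ranges line up.

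First I would bundle \autoref{Thm-R-surjective} and \autoref{Thm-R-injective} into the single statement that $R$ is a bijection from $\S'(\R^+\times\R^2)$ onto $\S'_{\text{cone}}(\R^+\times\R^2)$, and analogously bundle \autoref{Thm-A-surjective} and \autoref{Thm-A-injective} into the statement that $A$ is a bijection from $\S'(\R^+\times\R^2)$ onto $\S'_{\kappa^{-1}}(\R^+\times\R^2)$. Injectivity of $R^a$ is then a two-line deduction: if $R^a h_1 = R^a h_2$, then $A(R h_1) = A(R h_2)$, so injectivity of $A$ forces $R h_1 = R h_2$, and injectivity of $R$ forces $h_1 = h_2$. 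For surjectivity, given an admissible $q^a$ in the image $A(\S'_{\text{cone}})$, I set $q = A^{-1} q^a \in \S'_{\text{cone}}(\R^+\times\R^2)$ and then $h = R^{-1} q \in \S'(\R^+\times\R^2)$, so that $R^a h = A R h = A q = q^a$.

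The main subtlety worth flagging is that the natural codomain of $R^a$ is not all of $\S'_{\kappa^{-1}}(\R^+\times\R^2)$ but rather the smaller image $A(\S'_{\text{cone}}(\R^+\times\R^2))$; an arbitrary $q^a\in\S'_{\kappa^{-1}}$ need not lie in the range, because $A^{-1}q^a$ could fail to be supported in the cone. The corollary should therefore be read as a bijection onto this precise image. Whenever $q^a$ does arise as the attenuated measurement of an honest source $h\in\S'(\R^+\times\R^2)$ one automatically has $q=Rh\in\S'_{\text{cone}}$ and hence $q^a=Aq\in A(\S'_{\text{cone}})$, so the ambiguity is invisible in the physical reconstruction problem and the formula \autoref{eq:reconstructionPlane} recovers a unique $h$ from any realizable $q^a$.
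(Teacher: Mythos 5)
Your proposal is correct and follows exactly the route the paper intends: the paper offers no explicit proof of this corollary, simply asserting that it "follows" from the bijectivity of $R$ onto $\S'_{\text{cone}}$ and of $A$ onto $\S'_{\kappa^{-1}}$, and your composition argument is the natural way to fill that in. Your remark that the true range of $R^a$ is $A(\S'_{\text{cone}}(\R^+\times\R^2))$ rather than all of $\S'_{\kappa^{-1}}(\R^+\times\R^2)$ is a valid and worthwhile precision that the paper's statement glosses over.
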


\section{For a sphere observation surface}\label{sec:sphere}

In this section, we will demonstrate the viability of the reconstruction formula for a sphere observation surface.

\subsection{Filtered backprojection}

In this part, we consider the observation surface $\partial\Omega$ is a sphere $\partial B(0,\varpi)$.

The favourite way of inverting operator $R$ is the filtered backprojection formulas. Several different variations of such formulas were developed and various versions of the 3D inversion formulas could be found in \cite{KucKun08, Kuc12}. The classical backprojection formula is given by
\begin{equation}\label{eqBackProjection-withoutAttenuation}
h(\vx)=-\frac{1}{8\pi^2 \varpi}\Delta_{\vx}\int_{\partial\Omega}\frac{q(|\vxi-\vx|, \vxi)}{|\vxi-\vx|}\d S(\vxi),
\end{equation}
where $q(t, \vxi)$ is defined in \autoref{Integrated-p}.

However, since there is attenuation in the media, the measurement is actually $q^a(t,\vxi)$ instead of $q(t,\vxi)$. If the formula \autoref{eqBackProjection-withoutAttenuation} is directly used on the real data $q^a(t,\vxi)$, the reconstructed image will not be the real $h(\vx)$.

\subsection{reconstruction formula}

Recall in the frequency domain the solution of standard wave equation and attenuated wave equation in \autoref{freqsol-noatt} and \autoref{freqsol-att}, doing the inverse Fourier transform for them with respect to $\omega$, we get the expression of $q$ and $q^a$ in the time domain,
\begin{equation}\label{q-texpress}
q(t, \vxi)= \int_{\R^3} \frac{\delta(t-|\vxi-\vx|)}{t}h(\vx)\d \vx,
\end{equation}
and
\begin{equation}\label{qa-texpress}
q^a(t, \vxi)= \int_{\R^3} \frac{e^{-\kappa_{\infty}|\vxi-\vx|}}{|\vxi-\vx|}\Ff^{-1}_1(e^{\i\omega|\vxi-\vx|}e^{\kappa_*|\vxi-\vx|})h(\vx)\d \vx.
\end{equation}
where $\Ff^{-1}_1$ denotes the inverse Fourier transform with respect to the first variable, which in this section is $\omega$. Inserting $\kappa(\omega) = \frac\omega c+\i\kappa_\infty+\kappa_*(\omega)$ into \autoref{qa-texpress}, we get
\begin{equation}\label{qa-texpress1}
q^a(t, \vxi)= \int_{\R^3} \frac{e^{-\kappa_{\infty}|\vxi-\vx|}}{|\vxi-\vx|}\Ff^{-1}_1(e^{\i\kappa_*(\omega)|\vxi-\vx|})(t-|\vxi-\vx|)h(\vx)\d \vx.
\end{equation}

Since $\kappa_*$ is bounded from \autoref{deWeakAttenuation}, the Taylor expansion
\begin{equation*}
e^{\i\kappa_*(\omega)|\vxi-\vx|}=\sum_{j=0}^{\infty}\frac{(\i\kappa_*(\omega)|\vxi-\vx|)^j}{j!}
\end{equation*}
with respect to $\kappa_*(\omega)$ is uniformly convergent. Since $\Ff^{-1}_1(e^{\i\kappa_*(\omega)|\vxi-\vx|})$ is the inverse Fourier transform only with respect to $\omega$, so it is equal to $\sum_{j=0}^{\infty}\frac{|\vxi-\vx|^j}{j!}\Ff_1^{-1} ((\i\kappa_*(\omega))^j)$. Plugging this into \autoref{qa-texpress1}, and then
\begin{equation}\label{qa-texpress2}
q^a(t, \vxi)=\int_{\R^3}\frac{e^{-\kappa_{\infty}|\vxi-\vx|}}{|\vxi-\vx|}\sum_{j=0}^{\infty}\frac{|\vxi-\vx|^j}{j!}\Ff^{-1}_1((\i\kappa_*(\omega))^j)(t-|\vxi-\vx|)h(\vx)\d \vx.
\end{equation}

%Notice that when $j=0$, the first term of the Taylor expansion is
%\begin{align*}
%q^a(\vxi; t) &= \int_{\R^3}\frac{e^{-\kappa_{\infty}|\vxi-\vx|}}{|\vxi-\vx|}\Ff^{-1}_1(1)(t-|\vxi-\vx|)h(\vx)\d \vx\\
%&= \int_{\R^3}\frac{e^{-\kappa_{\infty}t}}{t}\delta(t-|\vxi-\vx|)h(\vx)\d \vx\\
%&= \frac{e^{-\kappa_{\infty}t}}{t}\int_{\R^3}\delta(t-|\vxi-\vx|)h(\vx)\d \vx\\
%&= e^{-\kappa_{\infty}t}q(\vxi; t).
%\end{align*}

If we denote
\begin{equation}\label{Func-rj}
r_j(t-|\vxi-\vx|)= (\Ff^{-1}((\i\kappa_*(\omega))^j))(t-|\vxi-\vx|),
\end{equation}
it is easy to see $r_j(t-|\vxi-\vx|)=\int_{\R}\delta(\tau-(t-|\vxi-\vx|))r_j(\tau)\d\tau$, and then plugging this formula and \autoref{Func-rj} into \autoref{qa-texpress2} yields
\begin{align*}
q^a(t, \vxi) &= \int_{\R^3}e^{-\kappa_{\infty}|\vxi-\vx|}\sum_{j=0}^{\infty}\frac{|\vxi-\vx|^{j-1}}{j!}r_j(t-|\vxi-\vx|)h(\vx)\d \vx\\
&= \sum_{j=0}^{\infty}\frac{1}{j!}\int_{\R^3}e^{-\kappa_{\infty}|\vxi-\vx|}|\vxi-\vx|^{j-1}\int_{\R}\delta(\tau-(t-|\vxi-\vx|))r_j(\tau)\d\tau h(\vx)\d \vx.
\end{align*}

Noticing that $\delta$ is supported on $|\vxi-\vx|=t-\tau$, so
\begin{equation*}
q^a(t, \vxi)= \sum_{j=0}^{\infty}\frac{1}{j!}\int_{\R^3}e^{-\kappa_{\infty}(t-\tau)}(t-\tau)^{j-1}\int_{\R}\delta(\tau-(t-|\vxi-\vx|))r_j(\tau)
\d\tau h(\vx)\d \vx.
\end{equation*}

Apply the multiplication operator $M=e^{\kappa_{\infty}t}$ on $q^a(t, \vxi)$, we have
\begin{equation*}
\begin{split}
e^{\kappa_{\infty}t}q^a(t, \vxi)&= \sum_{j=0}^{\infty}\frac{1}{j!}\int_{\R}(t-\tau)^{j-1}r_j(\tau)e^{\kappa_{\infty}\tau}\int_{\R^3}\delta(\tau-(t-|\vxi-\vx|)) h(\vx)\d \vx \d\tau\\
&=\sum_{j=0}^{\infty}\frac{1}{j!}\int_{\R}(t-\tau)^{j}r_j(\tau)e^{\kappa_{\infty}\tau}q(t-\tau, \vxi) \d\tau,
\end{split}
\end{equation*}
where the last equality holds by noticing the expression of $q(t, \vxi)$ in \autoref{q-texpress}.

We note that the summand in the above equation when $j=0$ is equal to
\begin{equation*}
\int_{\R}r_0(\tau)e^{\kappa_{\infty}\tau}q(t-\tau, \vxi) \d\tau = \int_{\R}\delta(\tau)e^{\kappa_{\infty}\tau}q(t-\tau, \vxi) \d\tau = q(t, \vxi).
\end{equation*}

Therefore, we could get the expression of $e^{\kappa_{\infty}t}q^a(t, \vxi)-q(t, \vxi)$,
\begin{equation}\label{qa-q-relation}
e^{\kappa_{\infty}t}q^a(t, \vxi)-q(t, \vxi)=\sum_{j=1}^{\infty}\frac{1}{j!}\int_{\R}(t-\tau)^{j}r_j(\tau)e^{\kappa_{\infty}\tau}q(t-\tau, \vxi) \d\tau.
\end{equation}

Denote
\begin{equation}\label{eqBackProjection-attenuation}
h^a(\vx)=-\frac{1}{8\pi^2 \varpi}\Delta_{\vx}\int_{\partial\Omega}e^{\kappa_{\infty}|\vxi-\vx|}\frac{q^a(|\vxi-\vx|, \vxi)}{|\vxi-\vx|}\d S(\vxi).
\end{equation}

From \autoref{eqBackProjection-attenuation} and \autoref{eqBackProjection-withoutAttenuation}, we know
\begin{equation}\label{ha-h-original}
h^a(\vx)-h(\vx)=-\frac{1}{8\pi^2\varpi}\Delta_\vx\int_{\partial\Omega}(\frac{e^{\kappa_{\infty}|\vxi-\vx|}}{|\vxi-\vx|}q^a(|\vxi-\vx|, \vxi)-q(|\vxi-\vx|, \vxi))\d S(\vxi).
\end{equation}
By plugging\autoref{qa-q-relation} into \autoref{ha-h-original}, we could compute the relation between $h^a(\vx)$ and $h(\vx)$.
\begin{align*}
h^a(\vx)-h(\vx) = -\frac{1}{8\pi^2\varpi}\Delta_{\vx}\int_{\partial\Omega}\sum_{j=1}^{\infty}\frac{1}{j!}\int_{\R}
(|\vxi-\vx|-\tau)^{j-1}r_j(\tau)e^{\kappa_\infty\tau}q(|\vxi-\vx|-\tau, \vxi)\d \tau\d S(\vxi).
\end{align*}

Inserting \autoref{q-texpress} into above yields
\begin{align*}
h^a(\vx)-h(\vx) &=-\frac{1}{8\pi^2\varpi}\Delta_{\vx}\int_{\partial\Omega}\int_{\Omega}\sum_{j=1}^{\infty}\frac{1}{j!}\int_{\R}
(|\vxi-\vx|-\tau)^{j-1}r_j(\tau)e^{\kappa_\infty\tau}h(\vy)\delta(|\vxi-\vy|-|\vxi-\vx|+\tau)\d \vy\d \tau\d S(\vxi)\\
&=-\frac{1}{8\pi^2\varpi}\Delta_{\vx}\int_{\partial\Omega}\int_{\Omega}\sum_{j=1}^{\infty}\frac{1}{j!}|\vxi-\vy|^{j-1}r_j(|\vxi-\vx|-|\vxi-\vy|)e^{\kappa_\infty(|\vxi-\vx|-|\vxi-\vy|)}h(\vy)\d \vy\d S(\vxi).
\end{align*}

Therefore, we can write $h^a(\vx)-h(\vx)$ as $Th(\vx)$, where $T$ is an integral operator with kernel
\begin{equation}\label{kernelT}
F_T(\vx,\vy)=-\frac{1}{8\pi^2\varpi}\sum_{j=1}^{\infty}\frac{1}{j!}\Delta_\vx\int_{\partial\Omega}|\vxi-\vy|^{j-1}r_j(|\vxi-\vx|-|\vxi-\vy|)e^{\kappa_\infty(|\vxi-\vx|-|\vxi-\vy|)}\d S(\vxi).
\end{equation}

Finally, let us sum up our reconstruction formula for the sphere observation surface.
\begin{equation}
h=(I+T)^{-1}B_pMq^{a},
\end{equation}
where\begin{itemize}
       \item $M$ is a multiplication operator satisfying $Mq^{a}=e^{\kappa_\infty t}q^{a}(t, \vxi)$;
       \item $B_p$ is the Filtered backprojection formula defined in \autoref{eqBackProjection-attenuation};
       \item $T$ is an integral operator with kernel given by \autoref{kernelT}.
     \end{itemize}

We will prove later that under certain additional assumptions on $\kappa_*(\omega)$, the operator $T$ will be compact, and therefore the singular values of $(I+T)$ have a positive lower bound. Thus $(I+T)^{-1}$ could be calculated by a matrix inversion numerically without ill-conditioning. Also, in the next subsection we will prove this decomposition of the operator $(R^a)^{-1}$ is feasible.

\subsection{Space clarification}

Assume $h(\vx)\in L^2(\Omega_\epsilon)$, to clarify the range and domain of the operators $(I+T)^{-1}$, $B_p$ and $M$ could coincide, we will introduce their domain and range respectively.

\begin{lemma}[\cite{Pal10}]\label{thmPalamodov}
Suppose that the observation surface $\partial\Omega$ is a smooth convex surface. Then the operator $R$ defined in \autoref{R} is a bounded linear operator from $L^2(\Omega_\epsilon)$ to $H^1(\R^+\times\partial\Omega)$, and the adjoint operator $R^*$ is a bounded linear operator from $H^1(\R^+\times\partial\Omega)$ to $H^2(\Omega_\epsilon)$.
\end{lemma}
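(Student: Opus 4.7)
My strategy is to treat the integrated photoacoustic operator $R$ as a Fourier integral operator (FIO) and read off its Sobolev mapping properties from general microlocal theory. Starting from the representation
\[
Rh(t,\vxi) = \frac{1}{4\pi t}\int_{\R^3}\delta(|\vxi-\vy|-t)h(\vy)\,\d\vy
\]
derived in the planar section, I would write the delta function as a Fourier integral in a dual variable $\tau$ to express $R$ with phase function $\Phi(t,\vxi,\vy,\tau) = \tau(|\vxi-\vy|-t)$ and a smooth amplitude. A dimension count then identifies $R$ as an FIO of order $-1$, consistent with the classical order of the spherical Radon transform in three dimensions.

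The crucial microlocal input is that the associated canonical relation is a local canonical graph (the Bolker condition). This is where smoothness and convexity of $\partial\Omega$ enter: convexity prevents any sphere of the form $\{|\vxi-\vy|=t\}$ with $\vxi\in\partial\Omega$ from being tangent to $\partial\Omega$ at a point of $\supp h$, ruling out the glancing singularities that would obstruct the canonical graph property. Together with the standing assumption $\supp h\subset\Omega_\epsilon\subset\subset\Omega$, the left and right projections from the canonical relation onto cotangent spaces are local diffeomorphisms on the cones of interest.

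With this in hand, I would apply the standard Hormander-Duistermaat-Guillemin continuity theorem: an FIO of order $m$ whose canonical relation is a local canonical graph maps $H^s_{\mathrm{comp}}$ continuously to $H^{s-m}_{\mathrm{loc}}$. Taking $m=-1$ and $s=0$ yields $R\colon L^2(\Omega_\epsilon)\to H^1_{\mathrm{loc}}(\R^+\times\partial\Omega)$; finite propagation speed confines $\supp Rh$ to a bounded time interval $[0,\diam(\Omega)+\varpi]$, so the local and global $H^1$ norms are equivalent on the range. For the adjoint, $R^*$ is itself an FIO of order $-1$ whose canonical relation is the transpose of that of $R$, hence still a canonical graph; applying the same continuity theorem with $s=1$ gives $R^*\colon H^1(\R^+\times\partial\Omega)\to H^2(\Omega_\epsilon)$.

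The main obstacle is the verification of the canonical-graph condition, which is a nontrivial geometric check and is precisely the content of Palamodov's analysis in the cited reference. An alternative, more explicit route for the spherical $\partial\Omega$ at hand is to expand $h$ in spherical harmonics $Y_l^m$ about the center of the observation sphere, diagonalizing $R$ into a family of one-dimensional integral transforms whose Sobolev mapping properties can be read off directly from Plancherel and the asymptotic behavior of the Bessel functions involved; this avoids invoking heavy FIO machinery at the cost of longer calculations.
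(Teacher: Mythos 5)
The paper does not prove this lemma at all: it is imported verbatim from \cite{Pal10} and used as a black box, so there is no internal argument to compare yours against. Judged on its own merits, your FIO sketch is the standard (and essentially correct) route to such mapping properties, and it is consistent with how results of this type are established in the literature: $R$ is, up to the smooth nonvanishing factor $t$ (which stays bounded away from zero because $\supp h\subset\Omega_\epsilon$ is at positive distance from $\partial\Omega$), a generalized Radon transform over spheres, hence an FIO of order $-(n-1)/2=-1$ in $\R^3$, and the H\"ormander--Duistermaat--Guillemin theorem then gives the gain of one derivative for both $R$ and $R^*$ once the canonical relation is a local canonical graph. Your handling of the $H^1_{\mathrm{loc}}$ versus $H^1$ issue via finite propagation speed is also fine.

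The one place where your argument is thin is exactly the step you flag as the "main obstacle": the verification of the Bolker condition. Your stated geometric reason --- that convexity prevents spheres centered on $\partial\Omega$ from being tangent to $\partial\Omega$ at points of $\supp h$ --- is not the relevant condition and is not quite meaningful as written (points of $\supp h$ lie strictly inside $\Omega$, so no sphere through them is tangent to $\partial\Omega$ \emph{there} regardless of convexity). What actually has to be checked is that the left projection of the canonical relation $C\subset T^*(\R^+\times\partial\Omega)\times T^*(\Omega_\epsilon)$ is an injective immersion; for spherical means the failure mode is two distinct points of the support lying on the same sphere with matching conormal data (the "mirror point" degeneracy), which is excluded when the support lies on one side of, and strictly inside, a closed convex detector surface. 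Since you explicitly defer this check to Palamodov's analysis, the sketch is acceptable as a proof plan, but as a self-contained proof it has a genuine gap at precisely the step that carries the mathematical content. Your alternative spherical-harmonics route is sound for the spherical $\partial\Omega$ actually used in \autoref{sec:sphere}, but note that the lemma as stated covers general smooth convex surfaces, for which that diagonalization is unavailable.
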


\begin{theorem}
For every $h\in L^2(\Omega_\epsilon)$, let $M$ be the multiplication operator satisfying $Mq^{a}(t, \vxi)=e^{\kappa_\infty t}q^{a}(t, \vxi)$, then $Mq^a\in H^1(\R^+\times\partial\Omega)$.
\end{theorem}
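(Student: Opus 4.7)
My plan is to reduce the claim to Palamodov's theorem (\autoref{thmPalamodov}) by exploiting the explicit series expansion of $q^a$ derived in equation \autoref{qa-q-relation}. Palamodov gives $q=Rh\in H^1(\R^+\times\partial\Omega)$ with $\|q\|_{H^1}\lesssim\|h\|_{L^2}$, and the spherical-mean representation \autoref{R-represent} forces $q(\cdot,\vxi)$ to have compact support in a bounded interval $[0,T]$, where $T$ depends only on $\diam(\Omega_\epsilon)$ and $\partial\Omega$. It therefore suffices to prove that $Mq^a-q$ lies in $H^1(\R^+\times\partial\Omega)$ with an analogous norm bound.

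Writing \autoref{qa-q-relation} as
\begin{equation*}
Mq^a(t,\vxi)-q(t,\vxi)=\sum_{j=1}^\infty\frac{1}{j!}G_j(t,\vxi),\quad G_j(t,\vxi)=\int_\R(t-\tau)^j r_j(\tau)e^{\kappa_\infty\tau}q(t-\tau,\vxi)\d\tau,
\end{equation*}
with $r_j=\Ff^{-1}((\i\kappa_*)^j)$, the change of variables $s=t-\tau$ rewrites each term as $G_j(t,\vxi)=e^{\kappa_\infty t}\bigl(u_j(\cdot,\vxi)*r_j\bigr)(t)$, where $u_j(s,\vxi)=s^j e^{-\kappa_\infty s}q(s,\vxi)$ is supported in $s\in[0,T]$ and satisfies $\|u_j(\cdot,\vxi)\|_{L^2}\leq T^j\|q(\cdot,\vxi)\|_{L^2}$. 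Since $\kappa_*\in L^2(\R)\cap L^\infty(\R)$ by \autoref{deWeakAttenuation}, the Fourier multiplier $(\i\kappa_*)^j$ is bounded by $\|\kappa_*\|_\infty^j$, so Plancherel yields the pointwise-in-$\vxi$ estimate $\|u_j(\cdot,\vxi)*r_j\|_{L^2(\R_t)}\leq\sqrt{2\pi}\,(T\|\kappa_*\|_\infty)^j\|q(\cdot,\vxi)\|_{L^2}$. The $t$-derivative and the tangential derivatives in $\vxi$ commute through the convolution and fall on $q$ (which is $H^1$-controlled by Palamodov) or on $s^j$ (contributing at most an extra polynomial factor in $T$ absorbed into the constant), so the same Plancherel estimate controls $\|G_j\|_{H^1(\R^+\times\partial\Omega)}$; summing, the $1/j!$ dominates the geometric $(CT\|\kappa_*\|_\infty)^j$ growth to give absolute convergence.

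The main obstacle I foresee is controlling the exponential weight $e^{\kappa_\infty t}$ in the $L^2(\R^+_t)$-norm, since $u_j*r_j$ is a priori only in $L^2(\R_t)$ with no decay and multiplication by $e^{\kappa_\infty t}$ need not preserve $L^2$-membership on $\R^+$. I would circumvent this by passing to Fourier via the shift identity $\Ff_1(e^{\kappa_\infty t}f)(\omega)=(\Ff_1 f)(\omega-\i\kappa_\infty)$ and using that both $\Ff_1 u_j(\cdot,\vxi)$ and the symbol $(\i\kappa_*)^j$ admit analytic continuations to the horizontal strip $\{\Im\omega>-\kappa_\infty\}$ with polynomial bounds---the former by Paley-Wiener from the compact $s$-support of $u_j$, and the latter from the holomorphic extension of $\tilde\kappa$ guaranteed by \autoref{deAttCoeff}(ii) combined with the polynomial-growth condition \autoref{deAttCoeff}(i). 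Plancherel applied on the shifted contour then furnishes the desired $L^2(\R^+)$-bound on $e^{\kappa_\infty t}(u_j*r_j)$, with constants still scaling geometrically in $j$, and the series thus converges absolutely in $H^1(\R^+\times\partial\Omega)$.
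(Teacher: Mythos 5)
Your overall route coincides with the paper's: both start from \autoref{thmPalamodov} to get $q=Rh\in H^1(\R^+\times\partial\Omega)$ and then control $Mq^a-q$ term by term through the series \autoref{qa-q-relation}. Your version supplies details the paper leaves implicit --- the $L^2\to L^2$ bound for convolution with $r_j$ via the multiplier estimate $\|(\i\kappa_*)^j\|_\infty\le\|\kappa_*\|_\infty^j$ (cleaner than the paper's appeal to an ``integral operator with $L^2$ kernel'', which on its own does not give $L^2\to L^2$ boundedness of a convolution), the factor $T^j$ from the compact time-support of $q$, and the $1/j!$ that makes the series converge. These are genuine improvements in rigor.

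The gap is in your treatment of the weight $e^{\kappa_\infty t}$, which you rightly identify as the crux. The shift identity $\Ff_1(e^{\kappa_\infty t}f)(\omega)=(\Ff_1 f)(\omega-\i\kappa_\infty)$ evaluates $\Ff_1 f$ at points with imaginary part $-\kappa_\infty<0$, so you need the symbol $(\i\kappa_*)^j=\Ff r_j$ to continue holomorphically into the strip $\{-\kappa_\infty<\Im\omega\le 0\}$ \emph{below} the real axis. \autoref{deAttCoeff}\textit{(ii)} gives the opposite: $\tilde\kappa$ (hence $\kappa_*$) extends only to $\overline{\H^+}$, and since $r_j$ is supported on $\R^+$ its Fourier--Laplace transform extends upward, with no control whatsoever below $\R$. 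So the step ``the latter from the holomorphic extension of $\tilde\kappa$'' does not go through; without a downward continuation, $e^{\kappa_\infty\tau}r_j(\tau)$ need not even be a tempered distribution (consider $r_j(\tau)\sim e^{-\kappa_\infty\tau/2}$ on $\R^+$). To be fair, the paper's own proof does not engage with this point either --- it simply asserts that the kernel in \autoref{qa-q-relation} is $L^2$, which requires exactly the same unproved decay of $e^{\kappa_\infty\tau}r_j(\tau)$ --- so you have located a real weakness of the argument; but your proposed repair imports a hypothesis (analyticity of $\kappa_*$ in a strip below $\R$, equivalently exponential decay of $r_j$ at rate $\kappa_\infty$) that the stated assumptions do not provide.
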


\begin{proof}
\autoref{thmPalamodov} deduces $q(t, \vxi)\in \H^1(\R^+\times\partial\Omega)$. From the assumption that $\kappa_*\in L^2$ in \autoref{deWeakAttenuation}, the representation of $r_j(\tau)$ in \autoref{Func-rj} belongs to $L^2(\R)$. \autoref{qa-q-relation} deduces that $Mq^a-q$ is the result of an integral operator with $L^2$ kernel acting on $q$, and therefore belongs to $L^2(\R^+\times\partial\Omega)$. Similarly, by differentiating \autoref{qa-q-relation} with respect to $t$, $\frac{\partial}{\partial t}(Mq^a-q)$ is the result of an integral operator with $L^2$ kernel acting on $\frac{\partial}{\partial t}q$, so it also belongs to $L^2(\R^+\times\partial\Omega)$. It follows that $Mq^a-q\in \H^1(\R^+\times\partial\Omega)$, and therefore $Mq^a\in H^1(\R^+\times\partial\Omega)$.
\end{proof}

Again using \autoref{thmPalamodov}, the adjoint operator $R^*$ is a bounded linear operator from $H^1(\R^+\times\partial\Omega)$ to $H^2(\Omega_\epsilon)$. Recall the expression of the operator $R^*$ in \autoref{Rstar_time}, the relationship $$(B_pq)(\vx)=C\Delta_{\vx}(R^*q)(\vx)$$ can be obtained. This means the filtered backprojection $B_p$ is a bounded operator from $\H^1(\R^+\times\partial\Omega)$ to $L^2(\Omega_\epsilon)$.

To identify the domain and range of operator $T$, noticing $T$ is an integral operator with kernel \autoref{kernelT}, we first give the boundedness of $r_j$.

\begin{lemma}\label{Thm-rj-bounded}
Let $r_j(t)$ be defined in \autoref{Func-rj} for every $j\in Z^+$, if $(\Ff r_j)(\omega)$ is bounded and there exist some constants $b_m$ such that $(\Ff r_j)(\omega)$ has an asymptotic expansion
\begin{equation}\label{Expansion-Fourier-rj}
(\Ff r_j)(\omega)\sim\sum_{m=1}^{\infty}b_m\omega^{-m}
\end{equation}
at $\omega\to\infty$, then $r_j(t)$ is bounded in $\R^+$.
\end{lemma}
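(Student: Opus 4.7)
The strategy is to decompose $\Ff r_j$ so that each summand has a demonstrably bounded inverse Fourier transform, using the boundedness hypothesis to control low frequencies and the asymptotic expansion to control high frequencies. Pick a smooth even cutoff $\chi\in C^\infty(\R)$ vanishing on $[-1,1]$ and identically $1$ outside $[-2,2]$, and an integer $N\geq 1$. Write
\[
(\Ff r_j)(\omega)=\bigl(1-\chi(\omega)\bigr)(\Ff r_j)(\omega)+\chi(\omega)\!\left[(\Ff r_j)(\omega)-\sum_{m=1}^{N}b_m\omega^{-m}\right]+\sum_{m=1}^{N}b_m\frac{\chi(\omega)}{\omega^m}.
\]

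The first term is bounded with compact support, hence in $L^1(\R)$. The second is bounded everywhere and $O(|\omega|^{-N-1})$ at infinity by the asymptotic expansion, so it lies in $L^1(\R)$ as soon as $N\geq 1$. For each $m\geq 2$, $\chi(\omega)\omega^{-m}$ vanishes near $0$ and decays like $|\omega|^{-m}$, and is therefore in $L^1(\R)$. By the Riemann--Lebesgue lemma, the inverse Fourier transforms of all these terms are bounded continuous functions on $\R$, hence bounded on $\R^+$.

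The one delicate contribution is $b_1\Ff^{-1}(\chi(\omega)/\omega)$, because $\chi(\omega)/\omega$ is not integrable. Since $\chi$ is even, $\chi(\omega)/\omega$ is odd, so its inverse Fourier transform is a constant multiple of $t\mapsto\int_{0}^{\infty}\chi(\omega)\sin(\omega t)/\omega\,\d\omega$. After the substitution $u=\omega t$ for $t>0$, this becomes $\int_{0}^{\infty}\chi(u/t)(\sin u)/u\,\d u$. The classical uniform boundedness of the sine integral $A\mapsto\int_{0}^{A}(\sin u)/u\,du$, combined with an integration by parts on the region $u\in[t,2t]$ where $\chi(u/t)$ transitions from $0$ to $1$, shows that this expression is bounded uniformly in $t\in\R^{+}$. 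Summing the contributions yields the desired bound on $r_j$.

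The main obstacle is this $m=1$ term: since $\omega^{-1}$ is neither locally integrable at the origin nor integrable at infinity, Riemann--Lebesgue does not apply and its inverse Fourier transform is only an $\sgn$-type distribution; one must explicitly exploit the oscillation of $e^{-\i\omega t}$ via the sine-integral estimate to extract the uniform bound. All other terms reduce cleanly to standard $L^1$ arguments.
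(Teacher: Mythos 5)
Your proof is correct, but it handles the one genuinely delicate point --- the non-integrable $b_1/\omega$ tail --- by a different device than the paper. You isolate that term with a smooth cutoff and control its inverse Fourier transform directly through the uniform boundedness of the sine integral $\int_0^A (\sin u)/u\,\d u$ (plus an integration by parts across the transition region), while everything else is dispatched by Riemann--Lebesgue. The paper instead replaces the leading asymptotic term $b_1/\omega$ by $b_1/(\omega+\i)$: the difference $\i b_1/(\omega(\omega+\i))$ decays like $|\omega|^{-2}$, so $(\Ff r_j)(\omega)-b_1/(\omega+\i)$ is $L^1$ (boundedness of $\Ff r_j$ covering the low frequencies), and the inverse Fourier transform of $b_1/(\omega+\i)$ is computed explicitly as the bounded causal exponential $-\i b_1\sqrt{2\pi}\,H(t)\e^{-t}$. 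The paper's pole-shifting trick buys a shorter argument with no oscillatory-integral estimate and an explicit formula for the singular piece; your cutoff decomposition is more laborious at $m=1$ but is arguably more transparent about where each hypothesis is used (boundedness for low frequencies, the expansion for high frequencies) and generalizes mechanically to higher-order subtractions. Both are sound; note only that your sine-integral bound needs the trivial estimate $\int_t^{2t}|\sin u|/u\,\d u\le t$ for small $t$ alongside the integration by parts for large $t$, which is worth stating explicitly.
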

\begin{proof}
Using the asymptotic expansion \autoref{Expansion-Fourier-rj}, when $\omega\to\infty$,
\begin{equation*}
\begin{split}
(\Ff r_j)(\omega)-\frac{b_1}{\omega+i}&= \sum_{m=1}^{\infty}b_m\omega^{-m}-\frac{b_1}{\omega+i}\\
&= \frac{b_1}{\omega}-\frac{b_1}{\omega+i}+\sum_{m=2}^{\infty}b_m\omega^{-m}\\
&= \frac{\i b_1}{\omega(\omega+i)}+\sum_{m=2}^{\infty}b_m\omega^{-m}.
\end{split}
\end{equation*}

Since $\frac{\i b_1}{\omega(\omega+i)}+\sum_{m=2}^{\infty}b_m\omega^{-m}=O(|\omega|^{-2})$ when $|\omega|\to\infty$, we conclude that $(\Ff r_j)(\omega)-\frac{b_1}{\omega+i}$ is an $L^1$ function. It is obvious that the inverse Fourier transform of an $L^1$ function is an $L^\infty$ function. Also, the Inverse Fourier transform of $\frac{b_1}{\omega+i}$ is $-b_1 i\sqrt{2\pi} H(t)e^{-t}$ where $H$ is the Heaviside function, as proved below.
\begin{equation*}
\begin{split}
\frac{1}{\sqrt{2\pi}}\int_{\R}e^{i\omega t}(-ib_1\sqrt{2\pi}H(t)e^{-t})\d t&=-b_1 i\int_{0}^{\infty}e^{(i\omega-1) t}\d t\\
&=-b_1i\left.\frac{e^{(i\omega-1) t}}{i\omega-1}\right|^\infty_0\\
&=\frac{b_1i}{i\omega-1}\\
&=\frac{b_1}{\omega+i}.
\end{split}
\end{equation*}

Therefore,  $r_j(t)$ is equal to the sum of $-b_1 i\sqrt{2\pi} H(t)e^{-t}$ and an $L^\infty$ function, so it is bounded.
\end{proof}

\begin{theorem}
If $\kappa_*(\omega)$ has an asymptotic expansion at $\omega\to\infty$
\begin{equation}\label{eq:kappastarassumption2}
\kappa_*(\omega)\sim\sum_{m=1}^{\infty}\kappa_m\omega^{-m}
\end{equation}
Then the integral operator $T$ with kernel defined by \autoref{kernelT} is a bounded operator from $L^2(\Omega_\varepsilon)$ to $L^2(\Omega_\varepsilon)$.
\end{theorem}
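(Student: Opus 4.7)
The plan is to reduce boundedness of $T$ to the factorization $T = B_p\,M\,R^a - I$, which is the operator-level form of the identity $Th = h^a - h$ used in the derivation preceding the theorem. A direct kernel-level estimate (e.g.\ Schur's test or Hilbert--Schmidt on \autoref{kernelT}) would be awkward, because $\Delta_\vx$ lands on $r_j$ and, for $j=1$, the corresponding symbol $-\omega^2(\i\kappa_*(\omega))$ is merely $O(|\omega|)$ under the hypothesis, hence unbounded. The factorization sidesteps this by keeping $\Delta_\vx$ absorbed in $B_p = C\,\Delta_\vx R^*$ and exploiting the $H^2$-smoothing of $R^*$ provided by \autoref{thmPalamodov}.

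I would proceed in three stages. First, observe that the asymptotic expansion $\kappa_*(\omega) \sim \sum_{m\geq 1} \kappa_m \omega^{-m}$, combined with the smoothness and polynomial boundedness of $\kappa$ from \autoref{deAttCoeff}, forces $\kappa_*(\omega) = O(|\omega|^{-1})$ at infinity and hence $\kappa_* \in L^2(\R)$. This places us inside the hypotheses of the theorem just above, so $M R^a$ is bounded from $L^2(\Omega_\varepsilon)$ to $H^1(\R^+\times\partial\Omega)$. Second, \autoref{thmPalamodov} together with the identity $B_p q = C\,\Delta_\vx(R^* q)$ yields that $B_p$ is bounded from $H^1(\R^+\times\partial\Omega)$ to $L^2(\Omega_\varepsilon)$. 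Third, composing and subtracting the identity gives boundedness of $B_p M R^a - I$ on $L^2(\Omega_\varepsilon)$.

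The remaining, and genuinely delicate, step is to verify that the operator with kernel \autoref{kernelT} literally coincides with $B_p M R^a - I$. This is where the asymptotic-expansion assumption is actually used, via \autoref{Thm-rj-bounded}: since $(\Ff r_j)(\omega) = (\i\kappa_*(\omega))^j$ inherits an asymptotic expansion of the form $\sum b_m \omega^{-m}$ from that of $\kappa_*$ (by multiplying out the formal series), each $r_j$ is bounded on $\R^+$. This makes \autoref{kernelT} pointwise meaningful and legitimizes the interchanges of the sum over $j$, the surface integral over $\partial\Omega$, and $\Delta_\vx$ carried out in the derivation. Once that identification is pinned down, the $L^2$--$L^2$ bound follows immediately from the chain $L^2(\Omega_\varepsilon) \xrightarrow{R^a} \cdot \xrightarrow{M} H^1(\R^+\times\partial\Omega) \xrightarrow{B_p} L^2(\Omega_\varepsilon)$; the main obstacle throughout is the bookkeeping around the termwise interpretation of \autoref{kernelT}, not the operator bound itself.
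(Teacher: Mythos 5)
Your argument is correct for the statement as literally phrased, but it takes a genuinely different route from the paper. The paper works directly on the kernel: it shows that each $r_j$ (and all its derivatives) is bounded on $\R^+$ via \autoref{Thm-rj-bounded}, expands $F_0(\vy+\rho\eta,\vy)$ as a power series $\sum_m f_m(\vy,\eta)\rho^m$ near the diagonal, and then applies $\Delta_\vx$ in polar coordinates to conclude $|F_T(\vx,\vy)|\le C|\vx-\vy|^{-2}$, i.e.\ that the kernel is weakly singular; boundedness then follows from the compactness criterion for weakly singular integral operators (the cited Theorem~1 of Kosarev). Your factorization $T=B_pMR^a-I$ instead leans on the mapping chain $L^2(\Omega_\varepsilon)\xrightarrow{MR^a}H^1(\R^+\times\partial\Omega)\xrightarrow{B_p}L^2(\Omega_\varepsilon)$, using \autoref{thmPalamodov} and the preceding theorem on $Mq^a$; this is cleaner and avoids the two derivatives falling on $r_j$, and your observation that the identification of the kernel operator with $B_pMR^a-I$ is the genuinely delicate step is well placed. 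The trade-off is that your route delivers only boundedness, whereas the paper's kernel estimate delivers compactness of $T$ --- which is what the paper actually needs afterwards, both in the statement of \autoref{thm:sphere} (where $T$ is asserted to be compact) and in the corollary on the boundedness of $(I+T)^{-1}$, which rests on the Fredholm alternative rather than on a norm bound. Since $I$ is not compact, compactness of $T$ cannot be read off from your factorization; so your proof suffices for the theorem as stated but does not substitute for the paper's argument in the larger logical chain.
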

\begin{proof}
Let
\begin{equation}\label{eq:F0}
F_0(\vx,\vy)=-\frac{1}{8\pi^2\varpi}\sum_{j=1}^{\infty}\frac{1}{j!}\int_{\partial\Omega}|\vxi-\vy|^{j}r_j(|\vxi-\vx|-|\vxi-\vy|)e^{\kappa_\infty(|\vxi-\vx|-|\vxi-\vy|)}\d S(\vxi),
\end{equation}
so that $F_T(\vx,\vy)=\Delta_xF_0(\vx,\vy)$.

The aim is to prove that $|F_T(\vx,\vy)|$ can be bounded above by $C|\vx-\vy|^{-2}$ for some constant $C$. To do this, we will prove that when $\vx\to \vy$, the kernel $F_0(\vx,\vy)$ has an asymptotic expansion
\begin{equation}\label{eq:F0AsymptoticExpansion}
F_0 (\vx,\vy)\sim\sum_{m=0}^{\infty} f_m(\vy,\frac{\vx-\vy}{|\vx-\vy|})|\vx-\vy|^m
\end{equation}
where $f_m$ are smooth functions on $\Omega_\varepsilon\times S^2$. Here $S^2\subset \R^3$ is the unit sphere. We will focus on the $j=1$ term in \autoref{eq:F0}. The other terms with $j\geq2$ will be processed by essentially the same argument.

Using \autoref{deAttCoeff}(ii) and \autoref{Hormander} originally defined in \cite{Hoe03}, the functions $r_j(t)=0$ when $t<0$. Therefore, we will investigate the properties of $r_j(t)$ when $t>0$.

First, we will prove that all derivatives of the function $r_1(t)=\Ff^{-1}(i\kappa_*(\omega))$ are bounded in $\R^+$. By using the asymptotic expansion \autoref{eq:kappastarassumption2},
\begin{equation*}
\begin{split}
(\Ff r_1^{(l)})(\omega)&=i(-i\omega)^l\kappa_*(\omega)\\
&\sim i(-i)^l\sum_{m=0}^{\infty}\kappa_m\omega^{l-m}\\
&=i(-i)^l\left(\sum_{m=0}^{l}\kappa_m\omega^{l-m}+\sum_{m=l+1}^{\infty}\kappa_m\omega^{l-m}\right).
\end{split}
\end{equation*}
Noticing that $\sum_{m=0}^{l}\kappa_m\omega^{l-m}$ is a polynomial in $\omega$, and $\sum_{m=l+1}^{\infty}\kappa_m\omega^{l-m}$ is a function satisfying the assumptions of \autoref{Thm-rj-bounded}. Therefore, $\Ff^{-1}\left(\sum_{m=0}^{l}\kappa_m\omega^{l-m}\right)$ is a distribution supported at $\{0\}$, and $\Ff^{-1}\left(\sum_{m=l+1}^{\infty}\kappa_m\omega^{l-m}\right)$ is a bounded function supported in $\R^+$. Thus every derivative of $r_1$ are bounded in $\R^+$, as claimed.

For the terms $r_j^{(l)}(t)$ with $j\geq2$, by using the asymptotic expansion \autoref{eq:kappastarassumption2},
\begin{equation*}
\begin{split}
(\Ff r_j)(\omega)&=(i\kappa_*(\omega))^j\\
&\sim i^j\left(\sum_{m=1}^{\infty}\kappa_m\omega^{-m}\right)^j\\
&\sim i^j\sum_{m=j}^{\infty}P_{j,m}(\kappa_1,\kappa_2,\dots)\omega^{-m},
\end{split}
\end{equation*}
where $P_{j,m}$ are polynomials of degree $j$. Therefore, for any $l\geq 0$, $r_j^{(l)}(t)$ is bounded in $\R^+$.

Let $r_j^*(t)=e^{\kappa_{\infty}t}r_j(t)$. Based on the boundedness of $r_j^{(l)}(t)$, $r_j(t)$ is amooth in $\R^+$, so $r^*_j(t)$ is also smooth in $\R^+$ because $e^{\kappa_\infty t}$ is smooth. Thus the following asymptotic expansion holds when $t\to 0^+$,
\begin{equation*}
r_j^*(t)\sim\sum_{m=0}^{\infty}d_{j,m}t^m
\end{equation*}
for all $j\geq 1$, where $d_{j,m}\in\C$. For any fixed $m\geq0$, the numbers $|d_{j,m}|$ can be bounded above by $(C_m)^j$ for some constants $C_m$.

Based on the smoothness of the observation surface $\partial\Omega$ and the smoothness of the functions $r_j^*(t)$ in $\R^+$, from \autoref{eq:F0}, $F_0(\vx,\vy)$ is smooth outside of the main diagonal $\vx=\vy$. We will now prove the asymptotic expansion \autoref{eq:F0AsymptoticExpansion} holds.

Let $\vx=\vy+\rho\eta$ with $\rho>0$ and $\eta\in S^2$. For any $\xi\in\partial\Omega$, the difference $|\vxi-\vy-\rho\eta|-|\vxi-\vy|$ is a smooth function of $\rho$, and can be written as $\frac{-\langle\eta,\vxi-\vy\rangle}{|\vxi-\vy|}\rho+\frac{|\vxi-\vy|^2-\langle\eta,\vxi-\vy\rangle^2}{2|\vxi-\vy|^3}\rho^2+O(\rho^3)$. The coefficients for the linear term and the quadratic term cannot both be zero, so one of these two terms dominates when $\rho\to0^+$. Therefore, when $\rho\to 0^+$, $|\vxi-\vy-\rho\eta|-|\vxi-\vy|$ goes to either $0^+$ or $0^-$.

Using the smoothness and expansion of the function $r_j^*$, we can conclude that the function $|\vxi-\vy|^{j}r_j^*(|\vxi-\vy-\rho\eta|-|\vxi-\vy|)$ has an asymptotic expansion with respect to $\rho$ when $\rho\to0^+$.

Since all the functions involved are bounded above by $C^j$ for some $C$, we can conclude the series
\begin{equation*}
\sum_{j=1}^{\infty}\frac{1}{j!}|\vxi-\vy|^{j}r_j^*(|\vxi-\vy-\rho\eta|-|\vxi-\vy|)
\end{equation*}
converges uniformly, and therefore can also be written as $\sum_{m\geq 0}f_m^*(\vy,\vxi,\eta)\rho^m$ for some smooth functions $f_m^*$.
\begin{equation}\label{eq:F0Expansion}
\begin{split}
F_0(\vy+\rho\eta,\vy)&=C\int_{\partial\Omega}\sum_{j=1}^{\infty}\frac{1}{j!}|\vxi-\vy|^{j}r_j^*(|\vxi-\vy-\rho\eta|-|\vxi-\vy|)\d S(\vxi)\\
&=C\int_{\partial\Omega}\sum_{m\geq 0}f_m^*(\vy,\xi,\eta)\rho^m\d S(\vxi)\\
&=C\sum_{m\geq 0}f_m(\vy,\eta)\rho^m
\end{split}
\end{equation}
where $f_m(\vy,\eta)=\int_{\partial\Omega}f_m^*(\vy,\xi,\eta)\d S(\vxi)$.

Having established the expansion \autoref{eq:F0Expansion}, we can apply the Laplace operator with respect to $\vx$ on it to get
\begin{equation*}
\begin{split}
F_T(\vx,\vy)&=\Delta_\vx F_0(\vx,\vy)\\
&=\sum_{m=0}^{\infty} \Delta_\vx f_m(\vy,\frac{\vx-\vy}{|\vx-\vy|})|\vx-\vy|^m\\
&=\sum_{m=0}^{\infty} \left(\frac{\partial^2}{\partial \rho^2}+\frac{2}{\rho}\frac{\partial}{\partial \rho}+\frac{1}{\rho^2}\Delta_\eta\right) f_m(\vy,\eta)\rho^m\\
&=\sum_{m=0}^{\infty} (m(m+1)f_m(\vy,\eta)+\Delta_{\eta}f_m(\vy,\eta))\rho^{m-2},
\end{split}
\end{equation*}
which means that $|F_T(\vx,\vy)|\leq C|\vx-\vy|^{-2}$, so $T$ is an integral operator with a weakly singular kernel. Using Theorem 1 of \cite{Kos74}, $T$ is a compact operator from $L^2(\Omega_\varepsilon)$ to $L^2(\Omega_\varepsilon)$.

\end{proof}

\begin{corollary}
Let $T$ be the integral operator with kernel \autoref{kernelT} and $\kappa_*(\omega)$ has an asymptotic expansion \autoref{eq:kappastarassumption2}, then $(I+T)^{-1}$ is a bounded operator from $L^2(\Omega_\varepsilon)$ to $L^2(\Omega_\varepsilon)$.
\end{corollary}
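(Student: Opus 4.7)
The plan is to invoke the Fredholm alternative. Since the preceding theorem has established that $T$ is compact on $L^2(\Omega_\varepsilon)$, the operator $I+T$ is a Fredholm operator of index zero. Consequently, $(I+T)^{-1}$ exists as a bounded linear operator on $L^2(\Omega_\varepsilon)$ if and only if $I+T$ is injective, i.e.\ $-1$ is not an eigenvalue of $T$. If injectivity is established, boundedness of the inverse is automatic by the open mapping theorem.

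First I would reduce the claim to injectivity of $I+T$. Suppose $h\in L^2(\Omega_\varepsilon)$ satisfies $(I+T)h=0$. Recall from the derivation leading to \autoref{kernelT} that $h^a(\vx)=h(\vx)+(Th)(\vx)$, where $h^a=B_pMq^a$ and $q^a=R^ah$. Therefore $(I+T)h=0$ is equivalent to $B_pMR^ah=0$, and the task becomes showing that this composition is injective on $L^2(\Omega_\varepsilon)$.

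Next I would decompose the injectivity argument into two parts. The first is injectivity of $R^a$ on $L^2(\Omega_\varepsilon)$: if $R^ah=0$, then $q^a=0$ on $\R^+\times\partial\Omega$, and by taking the temporal Fourier--Laplace transform one obtains, from \autoref{freqsol-att}, that the holomorphic function $z\mapsto \int_{\R^3}\frac{\i z}{4\pi\sqrt{2\pi}}\frac{\e^{\i\kappa(z)|\vxi-\vy|}}{|\vxi-\vy|}h(\vy)\d\vy$ vanishes on $\partial\Omega$ for all $z$ in $\overline{\H^+}$. Setting $z=\kappa^{-1}(\omega)$ (using \autoref{thm:main}) reduces this to vanishing of the un-attenuated pressure on $\partial\Omega$, and injectivity of the spherical mean transform with spherical observation surface (which surrounds $\Omega_\varepsilon$, see \cite{KucKun08}) then forces $h=0$. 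The second part is to note that $M$ is clearly invertible (multiplication by $\e^{\kappa_\infty t}$) and that $B_p=-\frac{1}{8\pi^2\varpi}\Delta_\vx R^*$ applied to $Mq^a$ recovers a nonzero function whenever $q^a$ itself carries enough information --- but this second step is in fact unnecessary once one notes that on the range of $R$ (and hence $R^a$), the composition $B_pM$ is exactly the classical inversion, so the composition $B_pMR^a$ equals $I+T$ by construction, and injectivity follows directly from injectivity of $R^a$ together with the identity $h^a=(I+T)h$.

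The main obstacle is the injectivity of $R^a$ on $L^2(\Omega_\varepsilon)$ for the spherical geometry. The cleanest route is to exploit \autoref{thm:main}, which gives the relation $(\Ff p^a)(\omega,\vxi)=\frac{\omega}{\kappa(\omega)}(\Ff p)(\kappa(\omega),\vxi)$; analytic continuation then transfers the vanishing of $p^a$ on $\R^+\times\partial\Omega$ to vanishing of $p$, whereupon classical spherical mean uniqueness finishes the argument. Everything else --- the Fredholm alternative giving surjectivity from injectivity, and the open mapping theorem giving boundedness of $(I+T)^{-1}$ --- is standard.
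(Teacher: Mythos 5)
The paper states this corollary with no proof at all; its implicit justification is compactness of $T$ plus the Fredholm alternative, and your proposal makes that reasoning explicit. You correctly identify the real issue that the paper glosses over: compactness of $T$ gives $I+T$ Fredholm of index zero, but boundedness of $(I+T)^{-1}$ still requires that $-1$ is not an eigenvalue of $T$, i.e.\ that $I+T$ is injective. Recognizing this is a genuine contribution relative to the paper's bare assertion that ``the singular values of $(I+T)$ have a positive lower bound,'' which does not follow from compactness alone.

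However, your injectivity argument has a genuine gap. From $(I+T)h = B_pMR^ah$ you want to conclude that $(I+T)h=0$ forces $h=0$, and you reduce this to injectivity of $R^a$, explicitly declaring the injectivity of $B_pM$ on the range of $R^a$ to be ``unnecessary.'' It is not. Injectivity of $R^a$ only tells you that $q^a=R^ah\neq 0$ whenever $h\neq 0$; it does not prevent $B_pMq^a$ from vanishing. The identity $B_pR=I$ shows that $B_p=-\frac{1}{8\pi^2\varpi}\Delta_\vx R^*$ is a left inverse of $R$ and hence injective on $\mathrm{range}(R)$, but $Mq^a = q + (\text{correction terms})$ does not lie in $\mathrm{range}(R)$ in general, and the composition $\Delta_\vx R^*$ can in principle annihilate a nonzero element outside that range (for instance, anything $R^*$ maps to a harmonic function on $\Omega_\varepsilon$). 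So the step ``injectivity of $I+T$ follows directly from injectivity of $R^a$'' is a non sequitur; you would need to separately establish that $B_pM$ is injective on $R^a\bigl(L^2(\Omega_\varepsilon)\bigr)$, or find a different argument (e.g.\ a smallness condition on $\kappa_*$ giving $\|T\|<1$ and a convergent Neumann series, which is what the paper's introduction hints at but the corollary as stated does not assume). Your outline of the injectivity of $R^a$ itself, via the Fourier--Laplace transform, \autoref{thm:main}, and uniqueness for the spherical mean transform, is sound in spirit and mirrors the paper's planar-case argument in \autoref{Thm-A-injective}.
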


\section*{Appendix}

Suppose that $f\in \mathcal{S}'(R^+)$ is a tempered distribution. We consider the function $e_z(t)=e^{izt}$ for $t>0$. If $\Im z>0$, the function is exponentially decreasing when $t\to+\infty$, so we can extend it to a Schwartz function $\tilde{e}_z(t)$ on $\R$. The Fourier-Laplace transform of $f$ is defined as
\begin{equation}
\Ff f(z)=\frac{1}{\sqrt{2\pi}}\langle f,\tilde{e}_z\rangle.
\end{equation}
Since $f$ is supported on $\R^+$, this inner product does not depend on the extension.

The following theorem shows that the function $\Ff f(z)$ is holomorphic on the upper half-plane $\H^+$ (for example, by verifying the Cauchy-Riemann equations). Conversely, every holomorphic function on $\H^+$ satisfying a growth condition is the Fourier-Laplace transform of a distribution supported in $\R^+$.

\begin{theorem}[\cite{Hoe03}]\label{Hormander}
Let $f\in \mathcal{S}'(\R^+)$ be a tempered distribution. Then its Fourier-Laplace transform $\Ff f(z)$ is holomorphic for $z\in\H^+$. Furthermore, for every $\eta>0$, there exists constants $C>0$ and $N\in \N$ such that
\begin{equation}
|\Ff f(z)|\leq C(1+|z|)^N
\end{equation}
for every $z\in \H^+$ with $\Im z\geq\eta$.

Conversely, If there is a holomorphic function $u:\H\to\C$, and there exist constants constants $C>0$, $\eta>0$ and $N\in \N$ such that
\begin{equation}
|u(z)|\leq C(1+|z|)^N
\end{equation}
for every $z\in \H^+$ with $\Im z\geq\eta$, Then there exists a tempered distribution $f$ supported on $\R^+$ such that $u=\Ff f$.
\end{theorem}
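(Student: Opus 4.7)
The statement has two directions, which I would handle separately. The forward direction is essentially formal, while the converse requires a contour-shift argument followed by a delicate reconstruction of the distribution.

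\textbf{Forward direction.} I would begin by making the extension $\tilde{e}_z$ from the appendix concrete: fix a cutoff $\chi\in C^\infty(\R)$ with $\chi=1$ on $[-1/2,\infty)$ and $\chi=0$ on $(-\infty,-1]$, and set $\tilde{e}_z(t):=\chi(t)e^{\i z t}$. Since $|e^{\i z t}|=e^{-(\Im z)t}$ decays exponentially for $t\ge 0$, this is Schwartz on $\R$ for every $z\in\H^+$, and it agrees with $e_z$ on $\R^+$ so that $\langle f,\tilde e_z\rangle$ is independent of the cutoff when $\supp f\subset[0,\infty)$. Next I would verify that the map $z\mapsto\tilde{e}_z$ from $\H^+$ into $\mathcal{S}(\R)$ is holomorphic, by showing that the difference quotient $h^{-1}(\tilde{e}_{z+h}-\tilde{e}_z)\to \i t\,\tilde{e}_z$ in every Schwartz seminorm. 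Continuity of $f$ on $\mathcal{S}(\R)$ then gives holomorphicity of $\Ff f(z)=\tfrac{1}{\sqrt{2\pi}}\langle f,\tilde e_z\rangle$. For the growth bound, the continuity of $f$ yields integers $M$ and a constant $C_0$ with $|\langle f,\varphi\rangle|\le C_0\sum_{|\alpha|,|\beta|\le M}\|\varphi\|_{\alpha,\beta}$; and for $\Im z\ge\eta$, a direct computation gives $\|\tilde{e}_z\|_{\alpha,\beta}\le C(\alpha,\beta,\eta)(1+|z|)^{|\alpha|}$ because each $t$-derivative brings down a factor of $\i z$ while $e^{-\eta t}$ controls polynomial weights on $[0,\infty)$. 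Combining yields the claim with $N=M$.

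\textbf{Converse direction.} Given $u$ holomorphic on $\H^+$ with $|u(z)|\le C(1+|z|)^N$ on $\{\Im z\ge\eta\}$, the function $v(\omega):=u(\omega+\i\eta)$ is continuous with polynomial growth on $\R$, hence defines a tempered distribution. Let $g:=\Ff^{-1}v\in\mathcal{S}'(\R)$. The central claim is that $\supp g\subset[0,\infty)$, which I would establish by the standard contour-shift argument: for $\varphi\in\mathcal{S}(\R)$ with $\supp\varphi\subset(-\infty,0)$, the inverse Fourier transform $\Ff^{-1}\varphi$ extends to an entire function that decays rapidly on every horizontal line $\Im\omega=c\ge 0$ (because the boundary of $\supp\varphi$ is on the correct side). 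Writing $\langle g,\varphi\rangle$ as $\int_\R u(\omega+\i\eta)\,\Ff^{-1}\varphi(\omega)\,d\omega$ and applying Cauchy's theorem in the strip $\eta\le\Im z\le\eta'$ (whose applicability follows from polynomial growth of $u$ and rapid decay of $\Ff^{-1}\varphi$), I can push the contour to $\Im z=\eta'$ and let $\eta'\to\infty$, forcing the integral to vanish. Finally, I would reconstruct $f$ from $g$ via the formal relation $f=e^{\eta t}g$: concretely defining $\langle f,\varphi\rangle:=\langle g,\chi(\cdot)e^{\eta\cdot}\varphi(\cdot)\rangle$ for the cutoff $\chi$ from above, and checking both that this is well-defined and tempered, and that its Fourier–Laplace transform satisfies $\Ff f(z)=\Ff g(z-\i\eta)=u(z)$ for $\Im z>\eta$, which extends to all of $\H^+$ by analytic continuation.

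\textbf{Main obstacle.} The forward direction is routine once $\tilde{e}_z$ is handled properly. The essential difficulty lies in the final step of the converse: the product $e^{\eta t}g(t)$ is not tempered for a generic $g\in\mathcal{S}'(\R)$ supported on $[0,\infty)$, because $e^{\eta t}$ is not a multiplier on $\mathcal{S}(\R)$. The construction succeeds only because $g$ is not generic: it arose as the inverse Fourier transform of a polynomially bounded function on the line $\Im z=\eta$, and one must extract from this origin enough decay of the dual action of $g$ against $\chi e^{\eta\cdot}\varphi$ to control every seminorm of $f$ by seminorms of $\varphi$. I expect this estimate to boil down to again integrating $u(\omega+\i\eta)$ against the analytically continued $\Ff^{-1}\varphi$ and using the polynomial bound; in effect, the same contour machinery used in the support argument must be pushed one step further to guarantee that the candidate $f$ really lies in $\mathcal{S}'(\R^+)$.
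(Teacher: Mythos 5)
The paper itself gives no proof of this statement---it is quoted as a known Paley--Wiener-type result and attributed to \cite{Hoe03}---so there is no internal argument to compare yours against; what follows assesses your proposal on its own terms. Your forward direction is right in outline, but the key estimate $\|\tilde e_z\|_{\alpha,\beta}\le C(\alpha,\beta,\eta)(1+|z|)^{|\alpha|}$ is false for a \emph{fixed} cutoff $\chi$: on the transition region $t\in[-1,-1/2]$ one has $|e^{\i z t}|=e^{-t\Im z}\ge e^{\Im z/2}$, so the Schwartz seminorms of $\tilde e_z$ grow exponentially in $\Im z$, and your argument only yields $|\Ff f(z)|\le Ce^{\Im z}(1+|z|)^{N}$. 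The standard repair is to let the extension depend on $z$, e.g.\ $\tilde e_z(t)=\chi(t\,\Im z)\,e^{\i z t}$, so the transition region shrinks to $[-2/\Im z,-1/\Im z]$, where $|e^{\i z t}|\le e^{2}$ while each derivative of the cutoff costs only a factor $\Im z\le|z|$; since the pairing is independent of the extension, this restores the polynomial bound. A real but fixable gap.

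The serious problem sits exactly at the step you flag as the ``main obstacle'' and leave open: showing that $f=e^{\eta t}g$ is tempered. This cannot be done from the stated hypotheses, because the converse as written is false. Take $u(z)=e^{\i/z}$: it is holomorphic on $\H^+$ and $|u(z)|=e^{\Im z/|z|^{2}}\le e^{1/\eta}$ for $\Im z\ge\eta$, so the hypotheses hold with $N=0$; yet $u(\i y)=e^{1/y}$ blows up faster than any power of $1/y$ as $y\to0^{+}$, while the (correct) forward direction shows that the Fourier--Laplace transform of any tempered distribution supported on $[0,\infty)$ is bounded by a fixed power of $1/y$ on the imaginary axis near $0$. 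Hence no tempered $f$ exists for this $u$, and in your construction the distribution $e^{\eta t}g$ is genuinely non-tempered. The missing ingredient is control of $u$ on the strip $0<\Im z<\eta$: H\"ormander's actual theorem assumes a bound of the form $C(1+|z|)^{N}(1+1/\Im z)^{N}$ valid on all of $\H^{+}$, and it is precisely the uniformity of these bounds as $\Im z\downarrow0$ that makes $e^{\eta t}g$ tempered. With that hypothesis your strategy (contour shift for the support, then the exponential correction) does go through; without it, no contour argument confined to $\Im z\ge\eta$ can close the gap. You correctly located where the difficulty lives, but the difficulty is an actual falsity of the statement as given rather than a technical estimate awaiting completion.
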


\newpage
\subsection*{Acknowledgements}
This research was funded in whole, or in part, by the Austrian Science Fund 10.55776/T1160 ``Photoacoustic Tomography: Analysis and Numerics''. For open access purposes, the author has applied a CC BY public copyright license to any author-accepted manuscript version arising from this submission.

\section*{References}
\renewcommand{\i}{\ii}
\printbibliography[heading=none]

\end{document}